\newtheorem{theorem}{Theorem}[section]
\newtheorem{proposition}[theorem]{Proposition}
\newtheorem{lemma}[theorem]{Lemma}
\newtheorem{definition}[theorem]{Definition}
\newtheorem{conjecture}[theorem]{Conjecture}
\newtheorem{example}[theorem]{Example}
\title[The Change-Making Problem for 6 coin values and beyond]{The Change-Making Problem\\ for six coin values and beyond}
\author{Cornelia A. Van Cott and Qiyu Zhang}
\begin{document}
\maketitle
\begin{abstract}
The change-making problem asks: given a positive integer $v$ and a collection $\mathcal{C}$ of integer coin values $c_1=1<c_2< c_3< \cdots< c_n$, what is the minimum number of coins needed to represent $v$ with coin values from $\mathcal{C}$? For some coin systems, the greedy algorithm finds a representation with a minimum number of coins for all $v$. Such coin systems are called orderly. Characterizations of orderly coin systems with 3, 4, 5, and 6 coin values, respectively, have been found. We give a simplified characterization of orderly coin systems with 6 coin values. Our technique of proof extends to larger coin systems, and we find orderly coin systems with an arbitrarily large number of coins $(c_1, c_2, \ldots, c_n)$ such that $(c_1, c_2, \ldots, c_i)$ is not orderly for all $3 < i < n$. 
\end{abstract}

\section{Introduction}

As customers, we trust store cashiers to give us our change using the minimum possible number of coins. Most of the time, cashiers get this done in a heartbeat. The secret to their success is that they use the so-called {\em greedy algorithm} to make change. Given a positive integer $v$, this algorithm works as follows. Find the largest coin value less than or equal to $v$. Take as many coins of that value as possible without the sum of the coins' values exceeding $v$. Then move to the next largest coin denomination and repeat the process until the entire collection's value sums up to $v$. For both the United States' coin system $(1,5,10,25)$ and the EU coin system $(1, 2, 5, 10, 20, 50, 100, 200)$, the greedy algorithm always produces the minimum number of coins. 

Since the greedy algorithm serves us so well with these familiar coin systems, it often comes as a surprise that the greedy algorithm fails to be optimal in other settings. Suppose, for example, that the United States replaced the dime and quarter with 15 and 20 cent coins, making the coin system $(1,5,15,20)$. If we needed 30 cents change, the greedy algorithm will produce a solution with three coins: 20 + 5 + 5. But we can get the job done with just two coins: 15 + 15. 

This leads us to the so-called {\em change-making problem}. Given a positive integer $v$ and a coin system $\mathcal{C} = (c_1, c_2, c_3, \ldots, c_n)$ with $c_1=1<c_2< c_3< \cdots< c_n$, what is the minimum number of coins needed to represent $v$ with coins from $\mathcal{C}$? Denote this minimum number via $opt_\mathcal{C}(v)$. We assume that there are an unlimited number of each type of coin available. The fact that the smallest coin value is always $c_1=1$ ensures that each positive integer $v$ has at least one representation. 

A collection of coins from coin system $\mathcal{C}$ with total value $v$ is called an {\em optimal solution} if the collection has a minimum number of coins. An optimal solution need not be unique. For instance, suppose the coin system is $\mathcal{C}=(1,3,4)$. Then there are two optimal solutions for $v = 9$ as follows: $3+3+3$ and $4+4+1$. 

For some coin systems, the greedy algorithm solves the change-making problem for all $v>0$. Let $grd_\mathcal{C}(v)$ denote the number of coins in the greedy representation of $v$ with coin system $\mathcal{C}$. We make the following definition.

\begin{definition}
A coin system $\mathcal{C}=(1, c_2, c_3, \ldots, c_n)$ is {\bf orderly}\footnote{In addition to this term, other terms used in place of {\em orderly} in the literature are canonical~\cite{cai, Kozen, Miyashiro}, greedy~\cite{Cowen}, and standard~\cite{Tien}.} if for all $v>0$,  $grd_\mathcal{\mathcal{C}}(v) = opt_\mathcal{\mathcal{C}}(v)$.
\end{definition}

The United States' coin system and the EU coin system are both orderly (we will justify this later), while the coin system $(1,5,15,20)$ is not orderly since the greedy algorithm failed to be optimal for $v=30$. For a non-orderly coin system $\mathcal{C}$, any positive integer $v$ such that $grd_\mathcal{C}(v) \neq opt_\mathcal{C}(v)$ is called a {\bf counterexample.}

The change-making problem is a special case of the knapsack problem. In this more general setting, suppose you have $n$  types of objects available to pack into a knapsack, and suppose the monetary value of each object is denoted by $c_1, c_2, \ldots, c_n$, respectively. These objects have weights denoted by $w_1, w_2, \ldots, w_n$, respectively. The knapsack problem asks how we can pack the knapsack in such a way that the monetary value of the content equals some value $v$ and the total weight of the packed knapsack is {\em minimized}. The change-making problem, then, is the special case in which the items have weight $w_i=1$ for all $i$. For a discussion of the knapsack problem, see survey articles~\cite{Cacchiani1, CACCHIANI2} and books~\cite{kellerer, martello}. 

Over the last several decades, progress has been made on the change-making problem. See Section~\ref{pastresults} for a summary of the big results. In particular, characterizations of orderly coin systems with 3, 4, and 5 coin values are reviewed in Theorems~\ref{three},~\ref{four}, and~\ref{five}, respectively. In 2023, a characterization of orderly coin systems with six coins was given by Miyashiro and Suzuki as follows.

\begin{theorem}\cite{Miyashiro}\label{six}
The coin system $\mathcal{C} = (1, c_2, c_3, c_4, c_5, c_6)$ is orderly if and only if one of the following holds:
\begin{enumerate}
    \item $(1,c_2,c_3,c_4,c_5)$ is orderly and $grd_{\mathcal{C}}(mc_5) \leq m$ where $m = \lceil \frac{c_6}{c_5}\rceil$.
    \item $(1,c_2,c_3,c_4,c_5)$ is not orderly and $\mathcal{C}$ is one of (a), (b), or (c), where\\ $\ell = \lceil \frac{c_5}{c_3} \rceil$. In addition, 
    $grd_{\mathcal{C}}(\ell c_3) =  \ell c_3 - c_5 + 1 -\lfloor (\ell c_3 - c_5)/c_2 \rfloor(c_2-1)$.
    \begin{enumerate}
    \item $\mathcal{C} = (1,2,3,a, a+1, 2a)$ and $a \geq 5$;
    \item $\mathcal{C} = (1,a,2a-1,b,a+b-1,2b-1),$ $b\geq 3a-1$, and $grd_{\mathcal{C}}(\ell c_3) \leq \ell$;
    \item $\mathcal{C} = (1,a,2a,b,a+b,2b),$ $b\geq 3a-1$, $b \neq 3a$, and $grd_{\mathcal{C}}(\ell c_3) \leq \ell$.
    \end{enumerate}
\end{enumerate}
\end{theorem}

While orderly coin systems of size 3, 4, and 5 provide a foundation for the subject, they yield limited intuition about larger coin systems. Thus the result for six coin values is critical in the development of more general results.

We independently found a characterization of orderly coin systems with six coins which simplifies the above characterization. While the characterization of Miyashiro and Suzuki splits into two cases depending on whether or not the coin system $(1, c_2, c_3, c_4, c_5)$ is orderly, we split into four cases as follows. 

 Given any coin system $\mathcal{C} = (1, c_2, c_3, \ldots, c_n)$, a {\bf prefix coin system} of $\mathcal{C}$ is defined as $\mathcal{C}' = (1, c_2, c_3, \ldots, c_i)$ where $1\leq i \leq n$. We can associate to any $n$-value coin system a string of $n$ symbols where the $i$th symbol is $+$ if the length $i$ prefix coin system is orderly and is $-$ otherwise. So for instance, the coin system $(1,2,5,6,10)$ has pattern $+ + + - +$, because all the prefix coin systems are orderly except for the fourth $(1,2,5,6)$ which has a counterexample of 10. Every orderly coin system with 4 or more coin values has a pattern that begins with $+++$ (Theorem~\ref{start+}). Therefore, orderly coin systems with 6 coin values have one of four possible $+/-$ patterns. Our result is as follows.

\begin{theorem}\label{main}
A coin system $\mathcal{C} = (1, c_2, c_3, c_4, c_5, c_6)$ is  orderly if and only if $\mathcal{C}$ is one of the following:
\begin{enumerate}
    \item Coin systems with pattern $+ + + + - +$.
    \begin{enumerate}
    \item $(1, 2, 3, a, a+1, 2a)$, where $a\geq 5$
    \item $(1,a,2a,b,b+a, 2b)$, 
    \\where $a\geq 2$, $b\geq 3a-1$, $b \neq 3a$, and  $grd_\mathcal{C}(2ma) \leq m $ where $m = \lceil \frac{b}{2a} \rceil$
    \item $(1,a,2a-1, b, b + a - 1, 2b - 1),$ 
    \\where $a\geq 2$, $b\geq 3a-1$, and $grd_\mathcal{C}(m(2a-1)) \leq m $ where $m = \lceil \frac{b}{2a-1} \rceil$
    \end{enumerate}
    \item Coin systems with pattern $ + + + - - +$.
 \begin{enumerate}
     \item $(1,a,2a-1,m(2a-1)-(a-1),m(2a-1),(2m-1)(2a-1))$, \\where $1<m<a$.
\item $(1,a,2a,m(2a-1)-(a-1),m(2a-1)+1, (2m-1)(2a-1)+1)$, 
\\where $1<m\leq a$.
 \end{enumerate}
    \item Coin systems with pattern $+ + + + + +$ or $+ + + - + +$.\\
$(1, c_2, c_3, c_4, c_5)$ is orderly and $grd_\mathcal{C}(mc_5) \leq m $ where $m = \lceil \frac{c_6}{c_5} \rceil$.
\end{enumerate}
\end{theorem}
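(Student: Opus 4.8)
The plan is to argue pattern by pattern. By Theorem~\ref{start+} the $+/-$ pattern begins $(+++)$, and requiring the sixth symbol to be $+$ (i.e.\ $\mathcal{C}$ orderly) leaves exactly the four patterns $(++++++)$, $(+++-++)$, $(++++-+)$, $(+++--+)$, grouped in the statement as (3), (3), (1), (2). I would rely on three tools: the Kozen--Zaks one-point theorem (the smallest counterexample of an $n$-value system lies in $[c_3+1,\,c_{n-1}+c_n)$); the known characterizations of orderly $3$-, $4$-, and $5$-value systems (Theorems~\ref{three}, \ref{four}, \ref{five}); and the elementary observation that for $v<c_n$ one has $grd_{(1,\dots,c_n)}(v)=grd_{(1,\dots,c_{n-1})}(v)$ and $opt_{(1,\dots,c_n)}(v)=opt_{(1,\dots,c_{n-1})}(v)$. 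The last point yields the structural backbone of the whole argument: if some prefix $(1,\dots,c_k)$ of an orderly system is itself non-orderly, then \emph{all} of its counterexamples are $\ge c_{k+1}$ (a counterexample $v<c_{k+1}$ would satisfy $v<c_{k+1}<\dots<c_n$ and hence be a counterexample of every longer prefix, contradicting orderliness of the whole), so by Kozen--Zaks its smallest counterexample lies in $[c_{k+1},\,c_{k-1}+c_k)$.

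\textbf{Parts (3): patterns $(++++++)$ and $(+++-++)$.} In both of these $\mathcal{C}'=(1,c_2,c_3,c_4,c_5)$ is orderly, so this case is an instance of a one-coin extension criterion: if $\mathcal{C}'$ is orderly then $\mathcal{C}=(1,\dots,c_6)$ is orderly iff $grd_\mathcal{C}(mc_5)\le m$ with $m=\lceil c_6/c_5\rceil$. The forward direction is trivial, since $mc_5$ has an $m$-coin representation. For the converse, by Kozen--Zaks and the observation above any counterexample $v$ of $\mathcal{C}$ satisfies $c_6\le v<c_5+c_6$; there greedy takes exactly one $c_6$ and completes $v-c_6<c_5$ optimally, so $v$ is a counterexample precisely when $grd_{\mathcal{C}'}(v)<1+grd_{\mathcal{C}'}(v-c_6)$. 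I would then show, using monotonicity of the greedy count of $\mathcal{C}'$ across the short residue-controlled piece of $[c_6,c_5+c_6)$ where this inequality could fail, that if it is ever violated it is already violated at the unique multiple $mc_5$ of $c_5$ in that interval. Collapsing this interval condition to a single point is the delicate step of Part (3).

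\textbf{Parts (1) and (2): patterns $(++++-+)$ and $(+++--+)$.} Here $\mathcal{C}'=(1,c_2,c_3,c_4,c_5)$ is non-orderly, so by the backbone its smallest counterexample $w'$ satisfies $c_5<c_6\le w'<c_4+c_5$ --- a window of width $<c_4$. In Part (2) the $4$-prefix is also non-orderly, so additionally $c_5\le w''<c_3+c_4$ for its smallest counterexample $w''$, which is very restrictive. I would then substitute the explicit forms: in Part (1), $(1,c_2,c_3,c_4)$ is one of the listed orderly $4$-value shapes (Theorem~\ref{four}) while $(1,c_2,c_3,c_4,c_5)$ is \emph{not} among the orderly $5$-value systems (Theorem~\ref{five}); in Part (2), $(1,c_2,c_3)$ is orderly (Theorem~\ref{three}) while $(1,c_2,c_3,c_4)$ is forced into a near-orderly shape. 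Together with a greedy-normal-form description of the representations of the smallest counterexample (each coin used a bounded number of times), feeding these into the windows pins the free parameters: it forces the doubling/near-doubling identities visible in the families --- $c_3=2c_2$ or $c_3=2c_2-1$, with $c_4,c_5,c_6$ determined by $c_2,c_4$ and an integer $m$ in the stated range --- and forces $c_6$ (via $c_6\le w'$ together with the demand that $\mathcal{C}$ have no counterexample of its own in $[c_6,c_5+c_6)$) to the listed value. The side conditions $grd_\mathcal{C}(2ma)\le m$ and $grd_\mathcal{C}(m'(2a-1))\le m'$ are exactly the one-point obstructions of the same ``greedy beats $m$ copies of $c_3$'' flavour, now for a non-orderly prefix and evaluated in the full system.

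\textbf{Converse, and the main obstacle.} For each listed family one must then check directly that $\mathcal{C}$ is orderly, i.e.\ by Kozen--Zaks that $grd_\mathcal{C}(v)=opt_\mathcal{C}(v)$ for all $v\in[c_3+1,\,c_5+c_6)$; since the families are given in closed form in the parameters $a,b,m$, this is a parametric verification --- one expands the greedy representation of a generic $v$ symbolically and compares it against the natural candidate optimal representations (multiples of a single coin, and the ``one large coin plus one near-doubling coin'' combinations built into these families), with the $grd_\mathcal{C}$ side conditions ruling out the residual near-multiple counterexamples. (The families are not claimed disjoint, nor their patterns claimed exclusive; at boundary parameter values a system may lie in more than one family and have a different pattern, and this overlap would simply be noted.) The main obstacle is the forward direction of Parts (1) and (2): distilling six rigid families out of the soft hypothesis ``a defective $5$-prefix is repaired by adjoining a single coin.'' The backbone reduction makes the search finite in spirit, but the case analysis is intricate, and the two genuinely subtle points are proving the single-point reduction in the one-coin extension criterion of Part (3) and certifying, in Parts (1)--(2), that no counterexample of the defective prefix other than the smallest one, and no new counterexample introduced by $c_6$ itself, survives --- which is precisely where the explicit $grd_\mathcal{C}$ conditions do their work.
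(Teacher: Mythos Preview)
Your decomposition by $+/-$ pattern matches the paper's, and your ``backbone'' observation (every counterexample of a non-orderly prefix of an orderly system is $\ge$ the next coin) is exactly the tightness property the paper exploits. For Part~(3), note that what you call the ``Kozen--Zaks one-point theorem'' is the range bound Theorem~\ref{subset}; the one-coin extension criterion you then set out to prove is the One Point Theorem (Theorem~\ref{OPT}) itself, which the paper simply cites. So Part~(3) is immediate and your proposed ``delicate step'' is unnecessary.

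The real gaps are in Parts (1)--(2). In Part~(1) forward, the paper's engine is: OPT on the orderly 4-prefix says $mc_4$ (with $m=\lceil c_5/c_4\rceil$) is a counterexample of the 5-prefix, so $c_6\le mc_4 < c_4+c_5<2c_5$; since $\mathcal{C}$ is orderly and $2c_5$, $c_4+c_5$ each have 2-coin representations, the differences $2c_5-c_6$ and $c_4+c_5-c_6$ must themselves be coin values $c_j,c_i$ with $i<j\le4$. Subtracting gives $c_5-c_4=c_j-c_i<c_4$, whence $m=2$, and a short case split on $(i,j)$ produces exactly the three families. Your ``windows'' language surrounds this idea but never states the equations, and they are what make the analysis finite. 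In Part~(2) forward the 4-prefix is non-orderly, so OPT no longer yields $c_6\le 2c_4$; the paper instead invokes Cai's theorem (Theorem~\ref{two-not-orderly}) on the tight 4- and 5-prefixes to produce a counterexample $c_s+c_t\le 2c_4$ of the 5-prefix, hence $c_6\le 2c_4$ and a \emph{third} equation $2c_4-c_6=c_i$. With only your two window bounds you cannot derive this, and without it the case analysis does not close. After Lemma~\ref{start} reduces to the shapes $(1,a,2a{-}1,b,\dots)$ and $(1,a,2a,b,\dots)$, the paper pins $b$ by a pointwise comparison of $grd(z)$ with $grd(z+a{-}1)$ (resp.\ $grd(z+a)$) on $[0,a]$ --- another concrete mechanism your sketch does not supply.

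For the converse, your plan to check all $v\in[c_3+1,c_5+c_6)$ symbolically is not obviously finite: in families (1b,c) the parameter $b$ is unbounded relative to $a$, so that interval has unbounded length and a generic-$v$ greedy expansion would itself need a structural argument. The paper avoids this via Pearson's theorem (Theorem~\ref{Pearson}), which reduces the minimum-counterexample search to the $O(n^2)$ candidates obtained by truncating and incrementing the greedy vectors of $c_k-1$; for (1b,c) it further applies Theorem~\ref{two-not-orderly} again to test only the sums $c_i+c_j$. Pearson and Cai are the two load-bearing tools your proposal is missing.
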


Proving that Theorems~\ref{six} and~\ref{main} give equivalent characterizations is not particularly difficult, but it is long. We omit it here. 

Comparing the characterizations in Theorems~\ref{six} and~\ref{main}, notice that Theorem~\ref{main} entirely removes the lengthy condition $$grd_C(\ell c_3) = \ell c_3 - c_5 + 1 -\lfloor (\ell c_3 - c_5)/c_2 \rfloor(c_2-1),$$
where $\ell = \lceil c_5/c_3 \rceil$. In particular, Theorem~\ref{main}(2) has no special conditions.

Another key difference between Theorem~\ref{six} and Theorem~\ref{main} is the method of proof. The main difficulty for both proofs is showing that the given conditions on $c_i$ are sufficient for $(1, c_2, c_3, c_4, c_5, c_6)$ to be orderly. The strategy used by Miyashiro and Suzuki is to work carefully through several possibilities. Given any value $v>0$, they consider the cases (1) $v < c_4$, (2) $c_4 < v < c_5$, (3) $c_5 < v < c_6$, and (4) $c_6 < v$. For each case, they show that $v$ is not a counterexample, and thus conclude that the coin system is orderly. 

In our proof, we use a result of Pearson (Theorem~\ref{Pearson}) which roughly says that a minimal counterexample, if it exists, is related to the coin values of $\mathcal{C}$ in a particular way. The strength of this approach is that it can be used on larger coin systems, as well. In comparison, checking for counterexamples between consecutive coin values becomes burdensome as the number of coin values increases.

In Section~\ref{extensions}, we exhibit that we can generalize our results and our method to find orderly coin systems with $n$ values. We consider orderly coin systems such that $(c_1, c_2, \ldots, c_i)$ is not orderly for all $3 < i < n$, which we denote by $+++-\cdots -+$. Many coin systems we observed with this pattern have a particular structure; we call them {\bf fixed gap coin systems} (Definition~\ref{fixedgap}). We give  three infinite families of fixed gap coin systems $\mathcal{D}$, $\mathcal{E}$, and $\mathcal{F}$ (see Theorems~\ref{D},~\ref{E}, and~\ref{F}). Coin systems in $\mathcal{D}$ have $3r+2$ coin types where $r\geq 1$, while $\mathcal{E}$ and $\mathcal{F}$ both have $3r$ coin types where $r\geq 2$. 

In Section~\ref{future}, we discuss the general case of finding all coin systems with pattern $+++-\cdots -+$. Conjecture~\ref{conjecture} summarizes our observations. \\

\noindent{\bf Acknowledgements.} We thank the referees for their helpful comments and suggestions.

\section{Background}~\label{pastresults}

A priori, checking whether an arbitrary coin system $\mathcal{C}$ is orderly is an infinite task, requiring us to confirm that $grd_\mathcal{C}(v) = opt_\mathcal{C}(v)$ for every $v>0$. The following result narrows down the location of the smallest counterexample of a coin system, if it exists.
\begin{theorem}\cite{Kozen}\label{subset}
If the coin system $\mathcal{C} = (1, c_2, c_3, \ldots, c_n)$ is not orderly, then the smallest counterexample $v$ lies in the range: $c_3 < v < c_{n-1}+c_n. $
\end{theorem}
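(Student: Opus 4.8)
The plan is to establish the two inequalities separately; the lower bound is routine and the upper bound is the substance. For $w > c_3$ I would simply check that the greedy algorithm is already optimal whenever $w \le c_3$, so such a $w$ cannot be a counterexample. If $w < c_2$, every representation of $w$ uses only pennies, so $grd_{\mathcal{C}}(w) = opt_{\mathcal{C}}(w) = w$. If $c_2 \le w < c_3$, every representation uses only the values $1$ and $c_2$, and a representation with $k$ copies of $c_2$ has $w - k(c_2 - 1)$ coins in all; this is minimized by taking $k$ as large as possible, which is exactly the greedy choice. If $w = c_3$, then $grd_{\mathcal{C}}(w) = opt_{\mathcal{C}}(w) = 1$. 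Hence the smallest counterexample, if it exists, exceeds $c_3$.

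For the upper bound I would argue by contradiction: suppose $w$ is the smallest counterexample and $w \ge c_{n-1} + c_n$. I will use two elementary facts about greedy: the \emph{greedy recursion} $grd_{\mathcal{C}}(v) = 1 + grd_{\mathcal{C}}(v - c_i)$ whenever $c_i$ is the largest coin value with $c_i \le v$ (in particular $grd_{\mathcal{C}}(v) = 1 + grd_{\mathcal{C}}(v - c_n)$ when $v \ge c_n$), and the trivial bound $opt_{\mathcal{C}}(v) \le grd_{\mathcal{C}}(v)$, which holds because the greedy output is a legitimate representation. First I would show that no optimal representation of $w$ uses a coin of value $c_n$: if one did, deleting that coin would give $opt_{\mathcal{C}}(w - c_n) \le opt_{\mathcal{C}}(w) - 1$, hence $opt_{\mathcal{C}}(w - c_n) = opt_{\mathcal{C}}(w) - 1$ (the reverse inequality always holds, by appending a $c_n$), while the greedy recursion gives $grd_{\mathcal{C}}(w - c_n) = grd_{\mathcal{C}}(w) - 1$; then $w - c_n$, which satisfies $0 < w - c_n < w$, would be a smaller counterexample. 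So I may fix an optimal representation $M$ of $w$, with $|M| = opt_{\mathcal{C}}(w)$, all of whose coins have value $\le c_{n-1}$.

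The heart of the argument is then as follows. Since the coins of $M$ all have value $\le c_{n-1}$ and their total is $w \ge c_n + c_{n-1}$, adding the coins of $M$ one at a time and stopping the first moment the running total reaches $c_n$ yields a sub-multiset $M_0 \subseteq M$ whose total $v_0$ satisfies $c_n \le v_0 < c_n + c_{n-1}$, because the last coin added has value less than $c_{n-1}$. Now $v_0 < c_n + c_{n-1} \le w$, so minimality of $w$ gives $grd_{\mathcal{C}}(v_0) = opt_{\mathcal{C}}(v_0) \le |M_0|$; and since $c_n \le v_0 < 2c_n$, the greedy recursion gives $grd_{\mathcal{C}}(v_0) = 1 + grd_{\mathcal{C}}(v_0 - c_n)$, whence $opt_{\mathcal{C}}(v_0 - c_n) \le grd_{\mathcal{C}}(v_0 - c_n) \le |M_0| - 1$. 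Replacing $M_0$ inside $M$ by an optimal representation of $v_0 - c_n$ produces a representation of $w - c_n$ with at most $(|M| - |M_0|) + (|M_0| - 1) = opt_{\mathcal{C}}(w) - 1$ coins, so $opt_{\mathcal{C}}(w - c_n) \le opt_{\mathcal{C}}(w) - 1$. But $w - c_n < w$, so the greedy recursion and minimality give $grd_{\mathcal{C}}(w) = 1 + grd_{\mathcal{C}}(w - c_n) = 1 + opt_{\mathcal{C}}(w - c_n) \le opt_{\mathcal{C}}(w)$, contradicting that $w$ is a counterexample. Therefore $w < c_{n-1} + c_n$.

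The main obstacle, such as it is, is locating the right object: one must see that an optimal representation of $w$ contains a sub-multiset whose value lands in the window $[c_n, c_n + c_{n-1})$, and that on this window greedy strips off exactly one copy of $c_n$ — after that, everything is bookkeeping with the greedy recursion and the minimality of $w$. Greedy is not subadditive in general, so one cannot bound $grd_{\mathcal{C}}(w)$ by naively splitting $w$; the above circumvents this by moving the split to the controlled value $v_0$ and the recursion-friendly value $w - c_n$. (The cases $n \le 2$ are vacuous, and $n = 3$ runs through the argument verbatim with $c_{n-1} = c_2$, $c_n = c_3$.)
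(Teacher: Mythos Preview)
The paper does not actually prove this theorem; it is quoted from \cite{Kozen} and stated without argument. So there is no in-paper proof to compare against, and your write-up stands or falls on its own.

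Your argument is correct and is essentially the standard Kozen--Zaks proof. The lower bound is handled cleanly. For the upper bound, the key steps---showing an optimal representation of the minimal counterexample $w$ avoids $c_n$, extracting a sub-multiset $M_0$ with total in $[c_n,\,c_n+c_{n-1})$, using minimality of $w$ on $v_0$ and on $w-c_n$, and closing via the greedy recursion---are all sound. One small slip: you write ``the last coin added has value less than $c_{n-1}$,'' but it should be \emph{at most} $c_{n-1}$; your subsequent inequality $v_0 < c_n + c_{n-1}$ is nevertheless correct, since the partial sum just before the last addition is $\le c_n - 1$.
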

The next result improves on the previous one and enables us to determine whether a coin system is orderly in $O(n^3)$ time. We will make use of this theorem throughout our work here. First we set up some notation and terminology.  

Say we have a coin system $\mathcal{C} = (1, c_2, c_3, \ldots, c_n)$, and suppose we found a way to represent the value $v$ via coins as follows:  $v = x_1 +x_2c_2 \cdots + x_nc_n$ where each $x_i$ is a nonnegative integer. We express this representation of $v$ by coins in $\mathcal{C}$ via a vector as follows: $(x_1, x_2, \ldots, x_n)_\mathcal{C}.$ Such a vector representing $v$ need not be unique, and we can order all such representations as follows.

\begin{definition}
Let $v$ be a positive integer and suppose that ${\vec x} = (x_1, x_2, \ldots, x_n)_\mathcal{C}$ and ${\vec x}' = (x_1', x_2', \ldots, x_n')_\mathcal{C}$ are two representations of the value $v$ with respect to the coin system $\mathcal{C} = (1, c_2, c_3, \ldots, c_n)$. The representation ${\vec x}$ is {\bf lexicographically smaller} than ${\vec x}'$ if $x_i = x_i'$ for $1\leq i\leq k < n$ for some $k$ and $x_{k+1} < x_{k+1}'$. 
\end{definition}

\begin{theorem}\cite{Pearson}\label{Pearson}
Let $\mathcal{C} = (1, c_2, c_3, \ldots, c_n)$ be a non-orderly coin system. Let $w$ be the minimum counterexample for $\mathcal{C}$. Suppose that the lexicographically smallest optimal solution for $w$ with respect to $\mathcal{C}$ is 
$$(0,0,\ldots, 0,x_i,x_{i+1},\ldots,x_j,0,0,\ldots,0)_\mathcal{C} $$
where $1\leq i\leq j <n$ and $x_i, x_j>0$.  Then the greedy representation for $c_{j+1}-1$ in $\mathcal{C}$ is
$$(y_1, y_2, \ldots, y_{i-1},x_i - 1, x_{i+1}, \ldots, x_j,0, 0, \ldots,0)_\mathcal{C} $$
where each $y_i$ is a nonnegative integer. 
\end{theorem}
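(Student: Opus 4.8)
The plan is to recognise that the claimed greedy representation of $c_{j+1}-1$ is forced by a short list of inequalities tying $c_{j+1}$ to the partial sums of the optimal solution of $w$, to prove the easiest of them from the minimality of $w$ alone, and to reduce the rest to a structural feature of the minimal counterexample. Throughout, write $\mathbf{x}=(0,\dots,0,x_i,x_{i+1},\dots,x_j,0,\dots,0)_\mathcal{C}$ for the lexicographically smallest optimal solution of $w$, and set $S_k=x_kc_k+x_{k+1}c_{k+1}+\cdots+x_jc_j$ for $i\le k\le j$, so that $S_i=w$, $S_{j+1}=0$, and $S_k=x_kc_k+S_{k+1}$. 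Two facts will be used repeatedly: (i) $grd_\mathcal{C}(u)=opt_\mathcal{C}(u)$ for every $u$ with $0<u<w$, since $w$ is the \emph{minimum} counterexample; and (ii) for each $k\in\{i,\dots,j\}$ the restriction of $\mathbf{x}$ to the coordinates below $k$ is the lexicographically smallest optimal solution of $w-S_k$ (hence $opt_\mathcal{C}(w-S_k)=x_i+\cdots+x_{k-1}$), and the vector got from $\mathbf{x}$ by lowering its $c_i$-coordinate by one is the lexicographically smallest optimal solution of $w-c_i$; both follow from optimality and lexicographic minimality of $\mathbf{x}$ by a splicing argument.

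First I would reduce the statement to inequalities by running the greedy algorithm on $c_{j+1}-1$. It uses no coin of value exceeding $c_j$, and then treats $c_j,c_{j-1},\dots,c_i$ in turn; a routine bookkeeping computation shows that it takes exactly $x_k$ coins of value $c_k$ for $i<k\le j$ and exactly $x_i-1$ coins of value $c_i$ --- after which the residue equals $c_{j+1}-1-w+c_i$ and is completed by greedy with coins of value below $c_i$, yielding the required nonnegative integers $y_1,\dots,y_{i-1}$ --- if and only if
\[
S_k<c_{j+1}\le S_k+c_k\ \ (i<k\le j),\qquad w-c_i<c_{j+1}\le w.
\]
(At each coin $c_k$ the left inequality keeps greedy from falling short of $x_k$ and the right one keeps it from overshooting; the pair involving $w$ does the same at $c_i$ for the target $x_i-1$, and simultaneously places the final residue in $[0,c_i)$.) Thus it suffices to establish these inequalities.

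The inequality $c_{j+1}\le w$ needs only the minimality of $w$. Suppose $c_{j+1}>w$. Then the largest coin that is at most $w$ is some $c_g$ with $g\le j$; since $\mathbf{x}$ uses the coin $c_j$ we also have $g\ge j$, so $g=j$, and greedy applied to $w$ takes $q:=\lfloor w/c_j\rfloor\ge x_j$ coins of value $c_j$. If $q=x_j$, then $grd_\mathcal{C}(w)=x_j+grd_\mathcal{C}(w-x_jc_j)=x_j+opt_\mathcal{C}(w-x_jc_j)$ by (i), which is at most $x_j+(x_i+\cdots+x_{j-1})=opt_\mathcal{C}(w)$, contradicting that $w$ is a counterexample. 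If $q>x_j$, then $w-x_jc_j\ge c_j>0$, so $i<j$; since $c_j\le w-x_jc_j<c_{j+1}$, greedy applied to $w-x_jc_j$ takes $q-x_j$ coins of value $c_j$ and then a residue $r=w-qc_j<c_j$, so by (i) and (ii) $(q-x_j)+grd_\mathcal{C}(r)=grd_\mathcal{C}(w-x_jc_j)=opt_\mathcal{C}(w-x_jc_j)=x_i+\cdots+x_{j-1}$, while $grd_\mathcal{C}(w)=q+grd_\mathcal{C}(r)$; subtracting gives $grd_\mathcal{C}(w)=x_i+\cdots+x_j=opt_\mathcal{C}(w)$, again impossible. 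Hence $c_{j+1}\le w$, and in particular $0<c_{j+1}-1<w$.

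What remains is to prove $w-c_i<c_{j+1}$ and $c_{j+1}\le S_k+c_k$ for $i<k\le j$ --- the lower bounds $S_k<c_{j+1}$ then come for free, since $S_k\le S_{i+1}=w-x_ic_i\le w-c_i$. This is the crux, and here the minimality of $w$ no longer suffices: one must use the lexicographic minimality of $\mathbf{x}$ as well. The route I would take is to prove that the minimal counterexample has the structural property that the ``tail'' $x_ic_i+\cdots+x_{k-1}c_{k-1}$ of $\mathbf{x}$ below each support coin $c_k$ ($i<k\le j$) is at most $c_k$, together with $w<c_{j+1}+c_i$; combining the tail bound with $c_{j+1}\le w=S_k+(x_ic_i+\cdots+x_{k-1}c_{k-1})$ yields $c_{j+1}\le S_k+c_k$, while $w<c_{j+1}+c_i$ is exactly $w-c_i<c_{j+1}$. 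I would prove each such claim by contradiction, splicing an optimal representation of a suitable smaller value --- $w-c_i$, or one of the tails, both controlled by fact (ii) --- back into $\mathbf{x}$ to produce either a strictly lexicographically smaller optimal solution of $w$ (contradicting the choice of $\mathbf{x}$) or a representation of $w$ on $opt_\mathcal{C}(w)$ coins that the greedy algorithm itself would output (contradicting that $w$ is a counterexample). I expect this to be the main obstacle: unlike the inequality $c_{j+1}\le w$, these concern the \emph{interior} coin values of the support of $\mathbf{x}$ and really do fail for non-minimal counterexamples --- for instance $(1,3,4)$ has the counterexample $v=10$ with lexicographically smallest optimal solution $(0,2,1)_\mathcal{C}$, whose tail $x_2c_2=6$ exceeds $c_3=4$, yet $10$ is not the minimal counterexample --- so the argument has to exploit the minimality of $w$ and the lexicographic minimality of $\mathbf{x}$ in tandem.
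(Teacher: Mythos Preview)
The paper does not contain a proof of this statement. Theorem~\ref{Pearson} is quoted from Pearson's paper \cite{Pearson} and used as a black box throughout Sections~\ref{mainresult} and~\ref{extensions}; no argument for it appears anywhere in the present paper. So there is nothing to compare your proposal against here.

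That said, a brief remark on the proposal itself. Your reduction to the chain of inequalities $S_k<c_{j+1}\le S_k+c_k$ and $w-c_i<c_{j+1}\le w$ is the right framework, and your proof of $c_{j+1}\le w$ is clean and correct. But the proposal is not a proof: for the remaining inequalities you only announce an intention (``I would prove each such claim by contradiction, splicing\dots'') and flag it as ``the main obstacle,'' without actually carrying out the splicing arguments that would establish $w<c_{j+1}+c_i$ and the tail bounds $x_ic_i+\cdots+x_{k-1}c_{k-1}\le c_k$. Your example with $(1,3,4)$ and $v=10$ correctly shows these can fail for non-minimal counterexamples, so the work really is required; as written, the crux of Pearson's theorem is asserted rather than proved. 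If you want a complete argument you will need to fill in those contradictions explicitly --- the original source \cite{Pearson} does exactly this.
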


Now we discuss results for prefix coin systems (defined in the Introduction). We have a term for orderly coin systems where all prefix coin systems are orderly.
\begin{definition}
A coin system $\mathcal{C}=(1, c_2, c_3, \ldots, c_n)$ is {\bf totally orderly} if each coin system $(1, c_2, c_3, \ldots, c_i)$ is orderly for all $i=1, 2, \ldots, n$.
\end{definition}

Sometimes knowing information about a prefix coin system yields information about the coin system as a whole. 

\begin{theorem}\cite{Adamaszek}\label{start+}
If $\mathcal{C} = (1,c_2, c_3, \ldots, c_n)$ is orderly, then the coin system $(1,c_2, c_k)$ is orderly for all $3\leq k \leq n$. In particular, $(1, c_2, c_3)$ is orderly.
\end{theorem}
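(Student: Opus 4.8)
The plan is to argue the contrapositive: assuming $(1,c_2,c_k)$ is \emph{not} orderly for some $3\le k\le n$, I will produce a value on which greedy fails in $\mathcal{C}$, contradicting that $\mathcal{C}$ is orderly. The first step is to understand non-orderly three-value systems. Write $c_k=qc_2+r$ with $0\le r<c_2$; since $c_k>c_2$ we have $q\ge 1$. A direct analysis (or Theorem~\ref{subset} together with Theorem~\ref{Pearson}) shows that $(1,c_2,c_k)$ is non-orderly exactly when $r\ge 1$ and $q+r<c_2$, and that its minimal counterexample is then $v:=(q+1)c_2$: greedy in $(1,c_2,c_k)$ builds $v$ from one coin $c_k$ and $c_2-r$ pennies (so $1+(c_2-r)$ coins), while $q+1$ coins of value $c_2$ also sum to $v$, and $q+1<1+(c_2-r)$ precisely because $q+r<c_2$.

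The heart of the argument is to run the greedy algorithm of $\mathcal{C}$ on this same $v=(q+1)c_2$. Because $c_k\le v<2c_k$, the first coin greedy chooses in $\mathcal{C}$ — call it $c_T$, the largest coin of $\mathcal{C}$ with $c_T\le v$ — satisfies $c_T\ge c_k$; greedy uses exactly one copy of $c_T$, and the remainder $v-c_T\le v-c_k=c_2-r<c_2$ is necessarily finished off with pennies. Hence $grd_\mathcal{C}(v)=1+(v-c_T)$, while $opt_\mathcal{C}(v)\le q+1$ (use $q+1$ coins of value $c_2$). If $c_T=c_k$ — which is automatic when $k=n$, since then $v>c_n$ exceeds every coin — then $grd_\mathcal{C}(v)=1+(c_2-r)>q+1\ge opt_\mathcal{C}(v)$, so $v$ is a counterexample in $\mathcal{C}$ and we are done. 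This already settles the case $k=n$, and hence the whole statement when $n=3$.

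The remaining — and genuinely harder — case is $c_T>c_k$, which can occur only when $k<n$: some coin of $\mathcal{C}$ lies in $(c_k,v]$, and such a coin might let greedy in $\mathcal{C}$ sidestep the waste it incurs in $(1,c_2,c_k)$. The one piece of leverage is rigidity. Orderliness of $\mathcal{C}$ forces $1+(v-c_T)=grd_\mathcal{C}(v)=opt_\mathcal{C}(v)\le q+1$, so $c_T\ge v-q=(q+1)c_2-q$; thus the offending coin is trapped in the window $[(q+1)c_2-q,\ (q+1)c_2]$, which contains only $q+1$ integers and, since $q+r<c_2$, is short. From this tightly constrained configuration one then extracts a fresh counterexample in $\mathcal{C}$ by testing greedy against a second well-chosen value — for instance a value just below $c_T$, or $2c_k$, or $(q+2)c_2$, whichever exposes the inefficiency forced by $c_T$ — with Theorem~\ref{subset} localizing the search so that only finitely many values need be checked. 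An alternative route is induction on $n$: when $c_n\ge c_{n-1}+c_{n-2}$, Theorem~\ref{subset} shows the prefix $(1,c_2,\dots,c_{n-1})$ has no counterexample $\ge c_n$, and since it agrees with $\mathcal{C}$ on all values $<c_n$ it is orderly, so the inductive hypothesis applies; the case $k=n$ above serves as the base. I expect this rescue case to be the main obstacle, since it is exactly where one must use that \emph{all} of $\mathcal{C}$ is orderly rather than just its three-coin subsystems.
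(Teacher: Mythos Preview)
The paper does not prove this theorem; it is quoted from \cite{Adamaszek} as a background result, so there is no in-paper argument to compare against. I can only assess your proposal on its own merits.

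Your setup is correct: writing $c_k=qc_2+r$ with $1\le r<c_2$ and $q+r<c_2$, the value $v=(q+1)c_2$ is the minimal counterexample for $(1,c_2,c_k)$, and if the largest coin of $\mathcal{C}$ not exceeding $v$ is $c_k$ itself, then $grd_{\mathcal{C}}(v)=1+(c_2-r)>q+1\ge opt_{\mathcal{C}}(v)$, contradicting orderliness. This disposes of $k=n$ cleanly.

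The gap is exactly where you say it is, and you have not closed it. Both escape routes you sketch are left open:
\begin{itemize}
\item The ``try another value'' route names candidates ($2c_k$, $(q+2)c_2$, something just below $c_T$) but verifies none of them. In fact $(q+2)c_2$ need \emph{not} be a counterexample: once $c_{k+1}=(q+1)c_2$ is a coin, greedy reaches $(q+2)c_2$ in two coins via $c_{k+1}+c_2$. The value $2c_k$ leads to a further case split on whether $c_{k+2}\le 2c_k$, which can cascade.
\item The inductive route is stated only under the side hypothesis $c_n\ge c_{n-1}+c_{n-2}$. You give no argument when this fails, and that is precisely the regime in which the length-$(n-1)$ prefix can be non-orderly, so the induction does not go through.
\end{itemize}

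A sharper way to pin down the rescue case is to use Theorem~\ref{firstjump}: since $c_{k+1}-c_k\ge c_2-1\ge c_2-r=v-c_k$, any coin of $\mathcal{C}$ in $(c_k,v]$ forces $r=1$ and $c_{k+1}=v=(q+1)c_2$ exactly. This is much tighter than the window $[(q+1)c_2-q,(q+1)c_2]$ you derived. But even granting this, you still have to manufacture a contradiction from the configuration $c_k=qc_2+1$, $c_{k+1}=(q+1)c_2$, $q+1<c_2$, and that step is missing. As written, the proposal is a reasonable start but not a proof.
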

For example, consider the coin system $(1,3,4)$. The value 6 is a counterexample for this coin system (the greedy solution is $4+1+1$ while an optimal solution is $3+3$). Thus, the pattern for this coin system is $++-$. One might think that adding new coin values larger than 4 to the coin system could ``fix" it and make it orderly. But Theorem~\ref{start+} tells us that this is impossible. There is no collection of higher coin values which one could add to the coin system $(1,3,4)$ which would cause the resulting coin system to be orderly.  A more involved example involving 7-coins is as follows.

\begin{theorem}\cite{Adamaszek}\label{impossible}
There are no coin systems with the pattern $+++-+-+$.
\end{theorem}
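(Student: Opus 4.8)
The plan is a proof by contradiction. Suppose $\mathcal{C}=(1,c_2,c_3,c_4,c_5,c_6,c_7)$ has pattern $(+++-+-+)$, and write $\mathcal{C}_k=(1,c_2,\dots,c_k)$ for its prefixes, so $\mathcal{C}_3$ is orderly, $\mathcal{C}_4$ is not, $\mathcal{C}_5$ is orderly, $\mathcal{C}_6$ is not, and $\mathcal{C}_7=\mathcal{C}$ is orderly. The point is that $\mathcal{C}$ contains \emph{two} places where a non‑orderly prefix is ``repaired'' by adjoining one coin, and these two repairs should be incompatible. First I would localize the counterexamples. If $\mathcal{C}_k$ is orderly then no $v\le c_{k+1}$ is a counterexample for $\mathcal{C}_{k+1}$ (for $v<c_{k+1}$ it reduces to $\mathcal{C}_k$, and $v=c_{k+1}$ costs one coin either way), so together with Theorem~\ref{subset} the smallest counterexample $w$ of $\mathcal{C}_{k+1}$ satisfies $c_{k+1}<w<c_k+c_{k+1}<2c_{k+1}$; moreover the prefix $\mathcal{C}_{k+2}$ is orderly, hence orderly at $w$, which is impossible unless $c_{k+2}\le w$. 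Taking $k=3$ and $k=5$, and writing $w_4,w_6$ for the smallest counterexamples of $\mathcal{C}_4,\mathcal{C}_6$,
\[
c_4<c_5\le w_4<c_3+c_4,\qquad c_6<c_7\le w_6<c_5+c_6 .
\]

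Next I would convert each repair into an arithmetic statement. Since $w<2c_{k+1}$, greedy in $\mathcal{C}_{k+1}$ uses exactly one $c_{k+1}$‑coin at $w$ and $w-c_{k+1}<c_k$; orderliness of $\mathcal{C}_k$ makes that tail optimal, and Pearson's Theorem~\ref{Pearson} shows the optimal solution of $w$ in $\mathcal{C}_{k+1}$ avoids $c_{k+1}$. Thus ``$w$ is a counterexample of $\mathcal{C}_{k+1}$'' becomes $opt_{\mathcal{C}_k}(w)\le opt_{\mathcal{C}_k}(w-c_{k+1})$, ``$\mathcal{C}_{k+2}$ is orderly at $w$'' becomes $opt_{\mathcal{C}_k}(w)\ge 1+opt_{\mathcal{C}_k}(w-c_{k+2})$, and subtracting gives $opt_{\mathcal{C}_k}(w-c_{k+2})<opt_{\mathcal{C}_k}(w-c_{k+1})$ with both arguments below $c_k$. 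Applying Pearson's Theorem~\ref{Pearson} to $\mathcal{C}_4$ further shows the lexicographically smallest optimal solution of $w_4$ uses only $1,c_2,c_3$ and is tied to the greedy (= optimal, since $\mathcal{C}_3$ is orderly) representation of the relevant $c_{j+1}-1$; together with the inequalities above this reduces $(c_2,c_3,c_4,c_5)$ to a short list of shapes — that is, $\mathcal{C}_5$ is one of the ``freshly repaired'' orderly five‑coin systems (importable from the classification of orderly five‑coin systems, Theorem~\ref{five}, or re‑derivable here from Theorems~\ref{subset} and~\ref{Pearson}). For each shape one writes $opt_{\mathcal{C}_5}$ explicitly on the relevant range and sees that the representations greedy can miss after one further extension are exactly sums of two coins of the extended system; running Pearson once more on $\mathcal{C}_6$ then forces $w_6$ itself to be such a two‑coin sum.

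The contradiction then comes from $c_7$ having too little room. Let $S$ be the set of values representable by two coins of $\mathcal{C}_6$; for $v\in S$ with $c_7<v<c_6+c_7$ one has $grd_{\mathcal{C}_7}(v)=1+grd_{\mathcal{C}_6}(v-c_7)$ while $opt_{\mathcal{C}_7}(v)\le 2$, so $v$ is a counterexample for $\mathcal{C}_7$ as soon as $v-c_7$ is not a single coin value of $\mathcal{C}_6$ (then $grd_{\mathcal{C}_6}(v-c_7)\ge 2$). Using $w_6\in S$, the window $c_7\le w_6<c_6+c_7$, and the small‑gap information carried by the rigid shape of $\mathcal{C}_5$ (which clusters several elements of $S$ around $w_6$ and controls which differences $v-c_7$ can be coin values), one produces such a $v$ for every admissible $c_7$, contradicting the orderliness of $\mathcal{C}_7$. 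Hence no coin system has pattern $(+++-+-+)$.

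The main obstacle is the last two steps: turning ``$\mathcal{C}_5$ is freshly repaired'' into an explicit finite case list, and then checking — uniformly over those cases and over all $c_6,c_7$ allowed by the localization — that some $v\in S$ in the interval $(c_7,c_6+c_7)$ has $v-c_7$ not a coin value, i.e.\ that the two repair constraints together with Kozen's window genuinely leave no slack. A secondary nuisance is self‑containedness: one must either quote the five‑coin classification or reprove just the ``freshly repaired'' portion directly from Theorems~\ref{subset} and~\ref{Pearson}.
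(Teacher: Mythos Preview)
The paper does not contain a proof of this statement: Theorem~\ref{impossible} is quoted from \cite{Adamaszek} and used as a black box, so there is no argument in the present paper to compare your proposal against.

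As for the proposal itself, it is a proof \emph{plan} rather than a proof, and you say so yourself. The localization steps are correct: from orderliness of $\mathcal{C}_k$ and Theorem~\ref{subset} you get $c_{k+1}<w<c_k+c_{k+1}$ for the smallest counterexample of $\mathcal{C}_{k+1}$, and orderliness of $\mathcal{C}_{k+2}$ at $w$ forces $c_{k+2}\le w$. The crucial structural input you mention in passing is in fact decisive: Theorem~\ref{five} does not give a ``short list of shapes'' for a $(+++-+)$ five-coin system but a \emph{single} one-parameter family, $\mathcal{C}_5=(1,2,a,a+1,2a)$ with $a\ge4$. You should put this front and centre rather than treating it as one option among several, because once $c_2=2$, $c_3=a$, $c_4=a+1$, $c_5=2a$ are fixed, the One Point Theorem applied to $\mathcal{C}_5\subset\mathcal{C}_6$ gives a concrete counterexample $2ma$ with $m=\lceil c_6/(2a)\rceil$, and the orderliness of $\mathcal{C}_7$ forces $c_7\le 2ma$; the ``finite case list'' you worry about collapses to an explicit analysis in the two integers $a$ and $c_6$.

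The genuine gap is exactly the one you flag: you have not carried out that analysis. Your final paragraph sketches a mechanism (find $v\in S$ with $c_7<v<c_6+c_7$ and $v-c_7$ not a coin value) but does not verify that such a $v$ always exists, and the ``small-gap information'' and ``clustering'' language is not yet an argument. Until that step is written out --- either directly here from the explicit form $(1,2,a,a+1,2a,c_6,c_7)$, or by citing the relevant lemma in \cite{Adamaszek} --- the proposal remains a credible outline rather than a proof.
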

The theorem above implies that no orderly currency has a prefix coin system with pattern $+++-+-+$. 

The following theorem (sometimes called the {\em One Point Theorem}) gives a necessary and sufficient condition to determine whether a coin system $(1, c_2, \ldots, c_{n-1}, c_n)$ is orderly, if one already knows that the prefix coin system $(1, c_2, \ldots, c_{n-1})$ is orderly.

\begin{theorem}[One Point Theorem]\cite{Adamaszek, Hu, Magazine}\label{OPT}
Suppose that $\mathcal{C}' = (1, c_2, \ldots, c_{n-1})$ is orderly and $c_{n-1} < c_{n}$. Let $m = \lceil \frac{c_n}{c_{n-1}} \rceil$. The following are equivalent:
\begin{enumerate}
    \item The coin system $\mathcal{C} = (1, c_2, \ldots, c_{n-1}, c_n)$ is orderly. 
    \item $grd_\mathcal{C}(mc_{n-1}) = opt_\mathcal{C}(mc_{n-1})$
   \item $grd_\mathcal{C}(mc_{n-1}) \leq m$
\end{enumerate}
\end{theorem}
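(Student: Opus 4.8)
The plan is to establish the cycle $(1)\Rightarrow(2)\Rightarrow(3)\Rightarrow(1)$. The implication $(1)\Rightarrow(2)$ is just the definition of orderly specialized to $v=mc_{n-1}$, and the real content lies in $(3)\Rightarrow(1)$. Throughout I would use the two elementary consequences of $m=\lceil c_n/c_{n-1}\rceil$: writing $t:=mc_{n-1}-c_n$, we have $0\le t<c_{n-1}$, and hence $c_n\le mc_{n-1}<c_n+c_{n-1}<2c_n$. For $(2)\Leftrightarrow(3)$ I would simply evaluate both sides at $mc_{n-1}$. Greedy on $mc_{n-1}$ removes exactly one coin $c_n$ and then runs inside the orderly prefix $\mathcal{C}'$ on the remainder $t<c_{n-1}$, so $grd_\mathcal{C}(mc_{n-1})=1+grd_{\mathcal{C}'}(t)=1+opt_{\mathcal{C}'}(t)$. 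Since $mc_{n-1}<2c_n$, every representation of $mc_{n-1}$ uses at most one coin $c_n$, so $opt_\mathcal{C}(mc_{n-1})=\min\{\,opt_{\mathcal{C}'}(mc_{n-1}),\,1+opt_{\mathcal{C}'}(t)\,\}$, and $opt_{\mathcal{C}'}(mc_{n-1})=grd_{\mathcal{C}'}(mc_{n-1})=m$ because greedy in $\mathcal{C}'$ represents $mc_{n-1}$ by $m$ copies of $c_{n-1}$. Comparing, $grd_\mathcal{C}(mc_{n-1})=opt_\mathcal{C}(mc_{n-1})$ holds exactly when $1+opt_{\mathcal{C}'}(t)\le m$, i.e.\ exactly when $grd_\mathcal{C}(mc_{n-1})\le m$.

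For $(3)\Rightarrow(1)$ I would argue the contrapositive, using only Theorem~\ref{subset} and the subadditivity of $opt_{\mathcal{C}'}$ (concatenating optimal representations of $a$ and $b$ gives one of $a+b$ with $opt_{\mathcal{C}'}(a)+opt_{\mathcal{C}'}(b)$ coins), which is also subadditivity of $grd_{\mathcal{C}'}$ since $\mathcal{C}'$ is orderly. So assume $\mathcal{C}$ is not orderly, with smallest counterexample $w$. First, $w\ge c_n$: if $w<c_n$, then no representation of $w$ in $\mathcal{C}$ uses $c_n$, so $grd_\mathcal{C}(w)=grd_{\mathcal{C}'}(w)=opt_{\mathcal{C}'}(w)=opt_\mathcal{C}(w)$, contradicting that $w$ is a counterexample. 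With Theorem~\ref{subset} this gives $c_n\le w<c_n+c_{n-1}$, so write $w=c_n+s$ with $0\le s<c_{n-1}$. Exactly as above, $grd_\mathcal{C}(w)=1+opt_{\mathcal{C}'}(s)$ and $opt_\mathcal{C}(w)=\min\{\,opt_{\mathcal{C}'}(w),\,1+opt_{\mathcal{C}'}(s)\,\}$; since $w$ is a counterexample the minimum cannot be realized by the second term, so $opt_{\mathcal{C}'}(c_n+s)=opt_{\mathcal{C}'}(w)\le opt_{\mathcal{C}'}(s)$.

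Now I would transport this inequality from the (not precisely located) value $w=c_n+s$ to the fixed value $mc_{n-1}$ by adding the gap $t$. Subadditivity gives
$$opt_{\mathcal{C}'}(mc_{n-1}+s)=opt_{\mathcal{C}'}\big((c_n+s)+t\big)\le opt_{\mathcal{C}'}(c_n+s)+opt_{\mathcal{C}'}(t)\le opt_{\mathcal{C}'}(s)+opt_{\mathcal{C}'}(t).$$
But because $s<c_{n-1}$, greedy in $\mathcal{C}'$ on $mc_{n-1}+s$ peels off $m$ copies of $c_{n-1}$ and then handles $s$, so $opt_{\mathcal{C}'}(mc_{n-1}+s)=grd_{\mathcal{C}'}(mc_{n-1}+s)=m+opt_{\mathcal{C}'}(s)$. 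Cancelling $opt_{\mathcal{C}'}(s)$ yields $opt_{\mathcal{C}'}(t)\ge m$, and therefore $grd_\mathcal{C}(mc_{n-1})=1+grd_{\mathcal{C}'}(t)=1+opt_{\mathcal{C}'}(t)\ge m+1>m$, which negates (3) and closes the cycle.

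The only genuinely nonobvious step is the transport argument of the last paragraph: the smallest counterexample $w=c_n+s$ may sit anywhere in the window $[c_n,c_n+c_{n-1})$, yet we must draw a conclusion about the single value $mc_{n-1}$. The key realization is that translating by the gap $t$ sends $w$ to $mc_{n-1}+s$, where the greedy (hence optimal) cost in $\mathcal{C}'$ is rigidly $m+opt_{\mathcal{C}'}(s)$, and subadditivity is precisely strong enough to pin the extra ``$+m$'' onto $opt_{\mathcal{C}'}(t)$. Everything else is bookkeeping with the inequalities forced by $m=\lceil c_n/c_{n-1}\rceil$ and with the orderliness of $\mathcal{C}'$. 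I note that one could instead feed the smallest counterexample into Pearson's Theorem~\ref{Pearson} for finer structure, but Theorem~\ref{subset} together with subadditivity already suffice, so I would keep that heavier tool in reserve.
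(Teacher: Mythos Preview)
The paper does not supply its own proof of Theorem~\ref{OPT}; it is quoted as a background result from \cite{Adamaszek, Hu, Magazine}. There is therefore no in-paper argument to compare against, and your proposal must be assessed on its own.

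Your argument is correct. The equivalence $(2)\Leftrightarrow(3)$ is handled cleanly by computing $grd_\mathcal{C}(mc_{n-1})=1+opt_{\mathcal{C}'}(t)$ and $opt_\mathcal{C}(mc_{n-1})=\min\{m,\,1+opt_{\mathcal{C}'}(t)\}$; the inequalities $0\le t<c_{n-1}$ and $mc_{n-1}<2c_n$ that you derive from $m=\lceil c_n/c_{n-1}\rceil$ justify every step. For $(3)\Rightarrow(1)$, your contrapositive works: the localization $c_n\le w<c_n+c_{n-1}$ follows from Theorem~\ref{subset} together with your direct observation that $w\ge c_n$, and the ``transport'' step is the genuine idea. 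Writing $w=c_n+s$ and adding $t$ lands you at $mc_{n-1}+s$, whose $\mathcal{C}'$-greedy (hence optimal) cost is exactly $m+opt_{\mathcal{C}'}(s)$ because $s<c_{n-1}$; subadditivity of $opt_{\mathcal{C}'}$ then forces $opt_{\mathcal{C}'}(t)\ge m$, which negates~(3). This is essentially the classical proof (as in Hu--Lenard and Magazine--Nemhauser--Trotter), phrased efficiently; your remark that Pearson's Theorem~\ref{Pearson} would also do the job but is unnecessary here is accurate.
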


The known results about orderly coin systems with only a few coin values are as follows. Any coin system with two coin values $\mathcal{C} = (1, c)$ where $c >1$ is orderly.  The orderly 3-value coin systems are then characterized by the One Point Theorem (Theorem~\ref{OPT}). An equivalent characterization, together with characterizations of orderly coin systems with 4 or 5 coins, are as follows.

\begin{theorem}\cite{Adamaszek, Kozen}\label{three}
The coin system $\mathcal{C} = (1, c_2, c_3)$ is orderly if and only if $c_3 - c_2\in \mathcal{A}$, where
$\mathcal{A}$ is defined as follows 
$$\mathcal{A} = \{c_2-1, c_2\}\cup\{2c_2-2, 2c_2-1, 2c_2\}\cup \cdots \cup \{mc_2-m,\ldots, mc_2\}\cup\cdots$$
\end{theorem}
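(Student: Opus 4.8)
The plan is to derive this from the One Point Theorem (Theorem~\ref{OPT}). Since the two-value system $(1,c_2)$ is always orderly, Theorem~\ref{OPT} tells us that $\mathcal{C}=(1,c_2,c_3)$ is orderly if and only if $grd_\mathcal{C}(mc_2)\le m$, where $m=\lceil c_3/c_2\rceil$. So the whole statement reduces to (i) computing $grd_\mathcal{C}(mc_2)$ explicitly in terms of $c_2$ and $c_3$, and (ii) checking that the resulting inequality is equivalent to $c_3-c_2\in\mathcal{A}$.

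For step (i), write $c_3=mc_2-s$ with $0\le s\le c_2-1$; such an $s$ exists and is unique exactly because $m=\lceil c_3/c_2\rceil$. Because $c_3>c_2$ we have $m\ge 2$, so $2c_3=2mc_2-2s>mc_2$ (using $m\ge2$ and $s<c_2$), and hence the greedy algorithm applied to $mc_2$ uses exactly one coin of value $c_3$. The leftover value is $mc_2-c_3=s<c_2$, which greedy pays off with $s$ pennies. Therefore $grd_\mathcal{C}(mc_2)=1+s$, and the One Point Theorem criterion $grd_\mathcal{C}(mc_2)\le m$ becomes simply $s\le m-1$.

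For step (ii), set $d:=c_3-c_2=(m-1)c_2-s$ (note $d\ge1$). By the definition of $\mathcal{A}$, we have $d\in\mathcal{A}$ exactly when $k(c_2-1)\le d\le kc_2$ for some integer $k\ge1$, i.e.\ when the interval $[\,d/c_2,\ d/(c_2-1)\,]$ contains an integer. Since $0\le s\le c_2-1$ one computes $\lceil d/c_2\rceil=m-1$, so this interval contains an integer if and only if $m-1\le d/(c_2-1)$, that is, $(m-1)(c_2-1)\le(m-1)c_2-s$, which rearranges to $s\le m-1$. Combining steps (i) and (ii) gives the theorem.

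I expect the only mildly subtle point to be step (ii): although $\mathcal{A}$ is written as an infinite union of blocks and those blocks overlap once their index is large compared to $c_2$, membership of $d=c_3-c_2$ is in fact controlled by the single block indexed by $k=m-1$, and one should record the computation $\lceil d/c_2\rceil=m-1$ carefully (together with the boundary cases $m=2$ and $s=c_2-1$, where $d=1$) to make this precise. Everything else is direct substitution.
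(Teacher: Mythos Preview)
Your proof is correct. The paper does not actually give its own proof of this theorem: it is stated as a cited result and described only as ``an equivalent characterization'' of the One Point Theorem criterion for three-coin systems. Your derivation---writing $c_3=mc_2-s$, computing $grd_\mathcal{C}(mc_2)=1+s$, and reducing both sides to the single inequality $s\le m-1$---is precisely the computation that makes that equivalence explicit, so your approach matches what the paper gestures at without spelling out.
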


\begin{theorem}\cite{Adamaszek, Cowen}\label{four}
The coin system $\mathcal{C} = (1, c_2, c_3, c_4)$ is orderly if and only if it is totally orderly.
\end{theorem}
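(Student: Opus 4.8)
The plan is to handle the two directions separately; essentially all of the (minimal) work lies in just one of them. The implication ``totally orderly $\Rightarrow$ orderly'' is immediate: a coin system is a prefix of itself, so if every prefix of $\mathcal{C}$ is orderly then in particular $\mathcal{C}$ is orderly. This observation needs nothing about the number of coin values and holds for systems of any size.

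For the converse, I would assume $\mathcal{C} = (1, c_2, c_3, c_4)$ is orderly and check that each of its four prefixes is orderly. The prefixes $(1)$ and $(1, c_2)$ are orderly unconditionally --- the two-value case is recalled in the paragraph preceding Theorem~\ref{three} --- and $\mathcal{C}$ itself is orderly by hypothesis, so the only prefix still in question is $(1, c_2, c_3)$. But ``$(1, c_2, c_3)$ is orderly'' is exactly the $k=3$ instance of Theorem~\ref{start+} applied to the orderly system $(1, c_2, c_3, c_4)$. Hence all four prefixes are orderly, i.e., $\mathcal{C}$ is totally orderly, and the equivalence follows.

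The honest caveat --- and the step I expect to be the real obstacle if one insists on a self-contained argument --- is that the crux of the converse is genuinely Theorem~\ref{start+}, which is more delicate than it first appears. The naive attempt would take a counterexample $w$ of the non-orderly prefix $(1, c_2, c_3)$ (which by Theorem~\ref{subset} lies in $c_3 < w < c_2 + c_3 < 2c_3$, hence is represented greedily with a single coin of value $c_3$) and hope that $w$ is still a counterexample after $c_4$ is appended. This can fail: $(1,3,4)$ has the counterexample $6$, yet in $(1,3,4,5)$ both the greedy and the optimal representations of $6$ use two coins, so one must instead look at $v=7$, which does witness that $(1,3,4,5)$ is not orderly. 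A direct proof therefore has to pin down the correct counterexample in terms of $c_2,c_3$ (and possibly $c_4$), which is precisely the content of the proof of Theorem~\ref{start+}; I would invoke that result rather than reprove it here.
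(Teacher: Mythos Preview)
Your argument is correct. The paper does not actually supply its own proof of this theorem; it is quoted in Section~\ref{pastresults} as a known result from~\cite{Adamaszek, Cowen}. Your derivation---trivial in one direction, and in the other reducing the only nontrivial prefix $(1,c_2,c_3)$ to the $k=3$ case of Theorem~\ref{start+}---is exactly the natural way to recover the statement from the other background results collected there, and your caveat about where the real content lies is accurate.
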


\begin{theorem}\cite{Adamaszek, Cowen}\label{five}
The coin system $\mathcal{C} = (1, c_2, c_3, c_4, c_5)$ is orderly if and only if one of the following holds:
\begin{enumerate}
    \item $(1,c_2,c_3,c_4,c_5)=(1,2,a,a+1,2a)$ for some $a\geq 4$, in which case $(1,c_2,c_3,c_4)$ is not orderly,
    \item $\mathcal{C}$ is totally orderly.
\end{enumerate}
\end{theorem}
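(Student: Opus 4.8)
The plan is to organize the proof around the $+/-$ pattern of $\mathcal{C}$. By Theorem~\ref{start+} every orderly coin system with at least four values has a pattern beginning $(+++)$, and since $\mathcal{C}$ is orderly its fifth symbol is $+$; hence an orderly $\mathcal{C}=(1,c_2,c_3,c_4,c_5)$ has pattern $(+++{+}+)$ or $(+++-+)$. If the fourth symbol is $+$ then every prefix of $\mathcal{C}$ is orderly and $\mathcal{C}$ is totally orderly, which is conclusion (2). Thus the whole content of the theorem is the assertion that the pattern $(+++-+)$ occurs exactly for the family $(1,2,a,a+1,2a)$ with $a\ge4$, and I would prove the two directions of this equivalence separately.

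For the ``if'' direction, take $\mathcal{C}=(1,2,a,a+1,2a)$ with $a\ge4$. First, the prefix $(1,2,a,a+1)$ is non-orderly: greedy represents $2a$ as $(a+1)$ together with $a-1$ worth of $2$'s and $1$'s, using more than two coins once $a\ge4$, whereas $a+a$ uses two; so the fourth symbol is $-$. To see that $\mathcal{C}$ itself is orderly I invoke Theorem~\ref{subset}: a smallest counterexample would have to lie in the window $c_3<v<c_4+c_5$, i.e. $a<v<3a+1$. Since $v<3a+1<2c_5$, greedy uses at most one coin $2a$, leaving a remainder of at most $a$ that is settled by $1,2,a$; a short case analysis according to the residue of the remainder modulo the small denominations shows greedy attains the optimum throughout the window, so no counterexample exists. (Alternatively one applies Theorem~\ref{Pearson} and contradicts the forced shape of the greedy representation.)

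The ``only if'' direction is the heart of the matter. Assume $\mathcal{C}$ is orderly with pattern $(+++-+)$, so $(1,c_2,c_3)$ is orderly, $\mathcal{C}'=(1,c_2,c_3,c_4)$ is non-orderly, and $\mathcal{C}$ is orderly. Let $w$ be the minimum counterexample of $\mathcal{C}'$ and put $m=\lceil c_4/c_3\rceil$. For $v<c_4$ the coin $c_4$ is unusable, so greedy and optimal in $\mathcal{C}'$ agree with those in the orderly system $(1,c_2,c_3)$; combined with Theorem~\ref{subset} this gives $c_4<w<c_3+c_4$, and in this window greedy in $\mathcal{C}'$ uses exactly one $c_4$ with remainder $w-c_4<c_3$. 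Applying Theorem~\ref{Pearson} to $\mathcal{C}'$ (noting that the greedy representation of $c_4-1$ uses $m-1$ coins of $c_3$) pins the $c_3$-coordinate of the lexicographically smallest optimal solution of $w$ to $m-1$ or $m$ and, after discarding the degenerate sub-case, identifies $w$ with $mc_3$; this also realizes $mc_3$ as the witness that the One Point Theorem (Theorem~\ref{OPT}) fails for $\mathcal{C}'$. Now orderliness of $\mathcal{C}$ enters: greedy in $\mathcal{C}$ coincides with greedy in $\mathcal{C}'$ on every value below $c_5$, so $\mathcal{C}'$ has no counterexample below $c_5$, forcing $c_4<c_5\le w<c_3+c_4$. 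Parametrizing the orderly pair $(1,c_2,c_3)$ by Theorem~\ref{three} and comparing the counterexample inequality $opt_{\mathcal{C}'}(w)<grd_{\mathcal{C}'}(w)$ with the repaired identity $grd_{\mathcal{C}}(w)=opt_{\mathcal{C}}(w)$ --- each expressed through greedy counts of the remainders $w-c_4$, $w-c_5$, and $mc_3-c_4$ in $(1,c_2)$ --- collapses the possibilities to $m=2$, $c_2=2$, $c_4=c_3+1$, and $c_5=2c_3$; writing $a=c_3$ and recalling that $(1,2,a,a+1)$ is non-orderly precisely when $a\ge4$ finishes the classification.

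The main obstacle is exactly this last collapse of parameters. The difficulty is the two-sided tension: $c_4$ must be placed so as to destroy the orderliness of $(1,c_2,c_3)$ (a lower bound on the gap $c_4-c_3$), while $c_5$ must simultaneously repair $w$ with a single extra coin and introduce no new counterexample in its own window $c_5<v<c_4+c_5$ (an upper bound that ultimately forces $c_5=w=2c_3$). Controlling the small-coin contributions --- in particular proving $c_2=2$ rather than some larger value still compatible with Theorem~\ref{three}, and ruling out $m\ge3$ --- requires tracking the greedy counts of several remainders in the orderly subsystem $(1,c_2,c_3)$ at once, and this bookkeeping is where the real work lies.
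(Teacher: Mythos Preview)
The paper does not prove this theorem; it is quoted from \cite{Adamaszek, Cowen} as background, so there is no in-paper proof to compare against. That said, the paper carries out the exactly analogous argument one dimension up (Theorem~\ref{+-+}, the $(++++-+)$ pattern for six coins), and your sketch diverges from that template in a way that leaves a genuine gap.

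Your reduction to the pattern $(+++-+)$ is correct, and your ``if'' direction is fine; indeed, orderliness of $(1,2,a,a+1,2a)$ for $a\ge4$ is literally the $r=1$ case of Theorem~\ref{D}. The problem is the ``only if'' direction. You assert that Pearson's theorem applied to $\mathcal{C}'=(1,c_2,c_3,c_4)$ ``identifies $w$ with $mc_3$'', but this fails in general: for $\mathcal{C}'=(1,3,5,6)$ one has $m=2$ and $mc_3=10$, yet the minimum counterexample is $w=8$, with lex-smallest optimal $(0,1,1,0)$ and $x_3=m-1$, not $m$. The identification $w=mc_3$ is not a consequence of Pearson alone, and your write-up invokes it before bringing in the five-coin orderliness that would be needed to rescue it. More seriously, the decisive collapse to $c_2=2$, $m=2$, $c_4=c_3+1$, $c_5=2c_3$ is only promised as ``bookkeeping''; you never actually produce the equations that force it.

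The route the paper uses in Theorem~\ref{+-+} transfers verbatim and bypasses $w$ entirely. Since $mc_3$ is a counterexample for $\mathcal{C}'$ and $\mathcal{C}$ is orderly, one has $c_5\le mc_3<c_4+c_3<2c_4$; orderliness of $\mathcal{C}$ then forces two-coin greedy representations $c_5+c_j=2c_4$ and $c_5+c_i=c_4+c_3$ for some $1\le i<j\le3$. Subtracting gives $c_4-c_3=c_j-c_i<c_3$, so $m=2$. The case $j=3$ combines to $c_4+c_i=2c_3$, contradicting that $2c_3=mc_3$ is a counterexample for $\mathcal{C}'$; hence $(i,j)=(1,2)$, and the two equations together with $c_5\le 2c_3$ immediately give $c_5=2c_3$, $c_4=c_3+1$, $c_2=2$. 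This is a two-line finish with no residual bookkeeping; your Pearson-based plan could perhaps be completed, but as written it stops exactly where the content is.
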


Notice that Theorem~\ref{three} and then repeated applications of Theorem~\ref{OPT} imply that the United States' coin system $(1,5,10,25)$ and the EU coin system $(1, 2, 5, 10, 20, 50, 100, 200)$ are both totally orderly. Characterizations for orderly coin systems with 6 coins were given in the introduction in Theorems~\ref{six} and~\ref{main}.

Next, we state constraints on the differences between consecutive coin values for orderly coin systems. The first is simple.

\begin{theorem}\cite{Adamaszek}\label{firstjump}
If $\mathcal{C} = (1, c_2, c_3, \ldots, c_n)$ is orderly, then for all $i = 2,3 \ldots, n$,
$$c_i - c_{i-1} \geq c_2 - 1.$$
\end{theorem}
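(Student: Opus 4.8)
The plan is to argue by contradiction: assume $\mathcal{C}$ is orderly but $c_i-c_{i-1}<c_2-1$ for some $i$, and produce a value $v$ with $grd_\mathcal{C}(v)>opt_\mathcal{C}(v)$. The case $i=2$ is immediate, since $c_2-c_1=c_2-1$. The case $i=3$ follows from Theorem~\ref{start+} together with Theorem~\ref{three}: the prefix $(1,c_2,c_3)$ is orderly, so $c_3-c_2\in\mathcal{A}$, and every element of $\mathcal{A}$ is at least $c_2-1$. So the substance is the case $i\geq 4$; here I set $d=c_i-c_{i-1}$ with $1\leq d\leq c_2-2$, and I take $i$ to be \emph{extremal} (the largest index with $d\leq c_2-2$) precisely so as to control the coins that lie just above $c_i$.

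The key value is $v=c_i+(c_2-1)$. On one hand, $v=c_{i-1}+c_2+(d-1)$, so $v$ is represented by one $c_{i-1}$, one $c_2$, and $d-1$ pennies, whence $opt_\mathcal{C}(v)\leq d+1\leq c_2-1$. On the other hand, if the greedy algorithm selects the coin $c_i$ at the relevant step, then the residual value $c_2-1$ lies below $c_2$ and must be completed with $c_2-1$ pennies, so $grd_\mathcal{C}(v)=c_2>c_2-1\geq opt_\mathcal{C}(v)$, contradicting orderliness. Greedy does pick up $c_i$ there exactly when no coin lies in $(c_i,\,c_i+c_2-1]$; if $i=n$ this is automatic, and if $i$ is the largest bad index the only coin that could intrude is $c_{i+1}$, and only when $c_{i+1}=c_i+c_2-1$ exactly — in which case I replace $v$ by $v-1=c_{i+1}-1$ (now greedy selects $c_i$) and rerun the count, with $opt_\mathcal{C}(v-1)\leq d$ as soon as $d\geq 2$.

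The residual difficulty — which I expect to be the real obstacle — is the configuration in which larger coins cluster just above $c_i$ (so greedy repeatedly skips past $c_i$), especially combined with the borderline value $d=1$. To handle this I would either chase the argument up the short chain of coins above $c_i$, adjusting $v$ by one coin at a time, or fall back on companion values such as $2c_{i-1}$ or $c_{i-1}+c_i$, whose optimal representations use just two coins while greedy is forced into pennies; which companion succeeds depends on the exact placement of the neighbouring larger coins. To close these last cases I would again invoke Theorem~\ref{start+}: each prefix $(1,c_2,c_k)$ is orderly, so by Theorem~\ref{three} every $c_k-c_2$ lies in $\mathcal{A}$, and $\mathcal{A}$ cannot accommodate several of these differences packed into a narrow window — which rules out precisely the tightly clustered coin configurations that would otherwise block the construction. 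Carrying out this bookkeeping cleanly, and checking that one of the candidate values always works, is the crux of the argument.
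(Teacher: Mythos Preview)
The paper does not prove this theorem; it is quoted from \cite{Adamaszek} as background, so there is no ``paper's own proof'' to compare against. I will therefore comment on your argument on its own terms.

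Your main line works cleanly in the generic case: with $i$ chosen \emph{maximal}, only $c_{i+1}$ could lie in $(c_i,\,c_i+c_2-1]$, and since $c_{i+1}\ge c_i+c_2-1$ you are immediately reduced to the single boundary configuration $c_{i+1}=c_i+c_2-1$. (Your stated worry about several larger coins ``clustering'' above $c_i$ cannot arise once $i$ is maximal, because $c_{i+2}\ge c_{i+1}+(c_2-1)\ge c_i+2(c_2-1)>v$.) The shift to $v-1$ then handles every $d\ge 2$, so the only open case is $d=1$ together with $c_{i+1}=c_i+c_2-1$.

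The genuine gap is your proposed mechanism for closing that last case. You plan to invoke Theorems~\ref{start+} and~\ref{three} to argue that $\mathcal{A}$ ``cannot accommodate several of these differences packed into a narrow window,'' but this is false: the blocks $[mc_2-m,\,mc_2]$ overlap once $m\ge c_2-1$, so $\mathcal{A}$ contains every integer from $(c_2-1)^2$ onward. In particular, for large $c_{i-1}$ the values $c_{i-1}-c_2$, $c_i-c_2$, $c_{i+1}-c_2$ can all sit in $\mathcal{A}$ with gaps $1$ and $c_2-1$, and Theorem~\ref{three} imposes no further restriction. So this route will not finish the proof. Your alternative suggestion of a companion value such as $2c_{i-1}$ is more promising (it does produce a counterexample in small examples like $(1,3,5,6,8)$ and $(1,4,7,10,11,14)$), but you have not shown that the greedy representation of $2c_{i-1}$ always uses more than two coins --- that requires controlling which $c_j$ greedy selects and showing $2c_{i-1}-c_j$ is never itself a coin, which is a nontrivial step you still owe.
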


The next result is more involved. Roughly speaking, if two consecutive differences between adjacent coin values are big, then {\em all} subsequent differences between adjacent coin values are also big.
\begin{theorem}\cite{Adamaszek}\label{jumps}
Suppose $\mathcal{C}=(1, c_2, c_3, \ldots, c_n)$ is orderly. If $m\geq 3$ and 
$$c_{m-1} > 2c_{m-2}~~\text{ and }~~c_m>2c_{m-1}, $$
then for every $t\geq m$, we have $c_{t+1} - c_t \geq c_m - c_{m-1}$.
\end{theorem}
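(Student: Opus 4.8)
The plan is to argue by contradiction. Assume $\mathcal{C}$ is orderly, write $g := c_m - c_{m-1}$, and suppose some index $t \geq m$ has $c_{t+1} - c_t < g$; choose the smallest such $t$, so that $c_{j+1} - c_j \geq g$ for all $m \leq j \leq t-1$, while the defining identity gives $c_m - c_{m-1} = g$ outright. First I would record the easy consequences of the two doubling hypotheses: $c_m > 2c_{m-1}$ yields $g > c_{m-1}$, so $g$ exceeds every coin value below $c_m$; together with Theorem~\ref{firstjump} this controls where the small remainders produced by the greedy algorithm can lie. Note also $c_{t+1} < 2c_t$, since $c_{t+1} - c_t < g$ and $g < c_m \leq c_t$.

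The core of the argument is the test value $v = 2c_t$: it satisfies $opt_\mathcal{C}(2c_t) \leq 2$, so orderliness forces $grd_\mathcal{C}(2c_t) \leq 2$, and I would contradict this. Because $c_t < c_{t+1} < 2c_t$, the first greedy step on $2c_t$ removes a coin $c_j$ with $j \geq t+1$. In the \emph{spread} case, where $c_{t+1}$ is itself the largest coin not exceeding $2c_t$, the remainder is $r = 2c_t - c_{t+1}$; using $c_{t+1} - c_t < g$ on one side and $c_t - c_{t-1} \geq g$ on the other, one checks that $c_{t-1} < r < c_t$, so $r$ lies strictly between two consecutive coin values and is not a coin. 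Hence greedy must spend at least a third coin, contradicting $grd_\mathcal{C}(2c_t) \leq 2$. (For the base case $t = m$ this is precisely the inequality $c_{m-1} < 2c_m - c_{m+1} < c_m$.)

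The main obstacle is the complementary \emph{clustered} case, in which several coins lie in $[c_t, 2c_t]$, so that the largest coin below $2c_t$ is some $c_j$ with $j \geq t+2$ and greedy could conceivably write $2c_t$ as a sum of two coins. To handle this I would climb the coin system: when $c_{t+2} \leq 2c_t$, the index $t+1$ (which still exceeds $m$) can take over the role of $t$, with $2c_{t+1}$ — or, when more convenient, the value $c_t + c_{t+1}$ — as the new test value; a short computation shows that unless the relevant new gap is itself $\geq g$ the greedy remainder again lands strictly between consecutive coins, and otherwise one pushes the analysis one coin higher. Since $\mathcal{C}$ has only finitely many coins this iteration terminates, eventually landing in the spread case and producing the contradiction. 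Making this climb rigorous — tracking exactly which remainder each stage produces and verifying it always falls strictly between two consecutive coin values, which is exactly where $g > c_{m-1}$ (from $c_m > 2c_{m-1}$) and the lower-end doubling $c_{m-1} > 2c_{m-2}$ get used — is the technical crux; if the direct bookkeeping becomes unwieldy, Theorem~\ref{Pearson} provides an alternative handle on the location of the offending remainder via the structure of the minimum counterexample.
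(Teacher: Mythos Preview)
The paper does not prove Theorem~\ref{jumps}; it is quoted from \cite{Adamaszek} with no argument given, so there is no in-paper proof to compare your proposal against.

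Evaluating your proposal on its own: the \emph{spread} case (where $c_{t+1}$ is the largest coin not exceeding $2c_t$) is handled correctly. From $c_{t+1}-c_t<g$ and $c_t-c_{t-1}\geq g$ you indeed get $c_{t-1}<2c_t-c_{t+1}<c_t$, so the greedy remainder is not a coin and $grd_\mathcal{C}(2c_t)\geq 3$, contradicting orderliness.

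The \emph{clustered} case, however, is not a proof but a hope. You propose to ``climb'' from $t$ to $t+1$ and re-run the argument with $2c_{t+1}$ or $c_t+c_{t+1}$, asserting that the remainder again falls strictly between consecutive coins ``unless the relevant new gap is itself $\geq g$,'' and that otherwise one pushes higher. But the inequality that made the spread case work was $c_t-c_{t-1}\geq g$, which came from the \emph{minimality} of $t$. Once you move to $t+1$, the corresponding lower gap is $c_{t+1}-c_t$, and that is \emph{less} than $g$ by hypothesis --- exactly the wrong direction. So the sandwiching $c_{t}<2c_{t+1}-c_{t+2}<c_{t+1}$ need not hold, and the same obstruction recurs at every level of the climb. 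Your termination claim (``finitely many coins, so eventually the spread case'') is also unsupported: nothing you have written forces the process to ever exit the clustered regime with a usable inequality. You yourself flag this as ``the technical crux''; as it stands it is a genuine gap, not a routine bookkeeping matter, and invoking Theorem~\ref{Pearson} as a fallback does not obviously help either, since that theorem locates a minimum counterexample rather than controlling arbitrary greedy remainders of the form $2c_t-c_j$.
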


To finish this section, we discuss properties of non-orderly coin systems and their counterexamples. Consider again the coin system $\mathcal{C} = (1, 3, 4)$. The smallest counterexample for this coin system was $w=6$. The greedy solution is $4+1+1$, while an optimal solution is $3+3$. Observe that the set of coin denominations used in creating the greedy solution $\{1,4\}$ was disjoint from the coin denominations used to create the optimal solution $\{3\}$. This is true in general, as the result below states.

\begin{proposition}\cite{Chang}\label{Chang}
Suppose a coin system $\mathcal{C} = (1, c_2, c_3, \ldots, c_n)$ is not orderly. Let $w$ be the smallest counterexample, and suppose that the greedy solution for $w$ is given by 
$w = x_1\cdot 1 + x_2\cdot c_2 + x_3\cdot c_3 + \cdots+ x_n\cdot c_n.$
Suppose an optimal solution for $w$ is given by:
$w = y_1\cdot 1 + y_2\cdot c_2 + y_3\cdot c_3 + \cdots + y_n\cdot c_n.$
Then $x_iy_i = 0$ for all $i$. 
\end{proposition}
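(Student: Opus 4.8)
The plan is a descent argument: I would show that if some coin value were used by both the greedy solution $\mathbf{x}$ and an optimal solution $\mathbf{y}$ for $w$, then deleting one such coin from each would exhibit a counterexample strictly smaller than $w$. The crux is a lemma about the greedy algorithm in isolation: \emph{if $v \geq 1$ and the greedy representation of $v$ in $\mathcal{C}$ uses at least one coin of value $c$, then $grd_\mathcal{C}(v-c) = grd_\mathcal{C}(v) - 1$} (indeed the greedy representation of $v-c$ is the greedy representation of $v$ with one coin of value $c$ removed). The hypothesis that $c$ genuinely appears in the greedy representation is essential and cannot be weakened to $c \leq v$: in the system $(1,10,25)$ the greedy representation of $30$ is $25+1+1+1+1+1$, so $grd_\mathcal{C}(30)=6$ while $grd_\mathcal{C}(20)=2$.

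I would prove the lemma by strong induction on $v$. Let $c_\ell$ denote the largest coin value $\leq v$. Greedy on $v$ takes one coin of value $c_\ell$ and then runs greedily on $v-c_\ell$, so the greedy representation of $v$ is one copy of $c_\ell$ together with the greedy representation of $v-c_\ell$. If $c=c_\ell$ the claim is immediate. If $c \neq c_\ell$, then $c < c_\ell$ and $c$ must occur in the greedy representation of $v-c_\ell$; since a coin of value $c$ gets used while greedily representing $v-c_\ell$, necessarily $c \leq v-c_\ell$, hence $c_\ell \leq v-c$. As the function sending $u$ to the largest coin value $\leq u$ is monotone nondecreasing and $v-c \leq v$, it follows that $c_\ell$ is also the largest coin value $\leq v-c$. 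Thus greedy on $v-c$ takes one coin of value $c_\ell$ and then runs greedily on $(v-c_\ell)-c$, and applying the inductive hypothesis to $v-c_\ell < v$ (where a coin of value $c$ does occur) gives $grd_\mathcal{C}(v-c) = 1 + grd_\mathcal{C}((v-c_\ell)-c) = 1 + (grd_\mathcal{C}(v-c_\ell)-1) = grd_\mathcal{C}(v) - 1$.

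Given the lemma, the proposition follows quickly. Suppose for contradiction that $x_k \geq 1$ and $y_k \geq 1$ for some $k$. Since $x_k \geq 1$, the greedy representation of $w$ uses a coin of value $c_k$, so the lemma gives $grd_\mathcal{C}(w) = grd_\mathcal{C}(w-c_k)+1$. Since $0 \leq w-c_k < w$ and $w$ is the smallest counterexample, $grd_\mathcal{C}(w-c_k) = opt_\mathcal{C}(w-c_k)$. Since $y_k \geq 1$, removing one coin of value $c_k$ from $\mathbf{y}$ produces a representation of $w-c_k$ using $opt_\mathcal{C}(w)-1$ coins, whence $opt_\mathcal{C}(w-c_k) \leq opt_\mathcal{C}(w)-1$. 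Combining, $grd_\mathcal{C}(w) = opt_\mathcal{C}(w-c_k)+1 \leq opt_\mathcal{C}(w)$, contradicting $grd_\mathcal{C}(w) > opt_\mathcal{C}(w)$.

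I expect the one real obstacle to be the lemma, and within it the step showing that the largest available coin value does not change when passing from $v$ to $v-c$: this is precisely where the hypothesis ``$c$ occurs in the greedy representation'' is spent, through the inequality $c \leq v - c_\ell$. The degenerate possibilities --- $w - c_k = 0$, where $grd_\mathcal{C}(0)=opt_\mathcal{C}(0)=0$, or a coin value occurring several times in the greedy solution --- need only a glance but cause no difficulty, since the induction deletes a single coin at a time and the case $c=c_\ell$ already absorbs $v-c=0$.
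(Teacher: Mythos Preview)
Your proof is correct. The paper itself does not supply a proof of this proposition; it is quoted from \cite{Chang} and used as a black box, so there is nothing to compare your argument against. Your descent argument via the lemma that removing a coin actually used by the greedy representation decrements $grd_\mathcal{C}$ by exactly one is the standard and cleanest route, and your inductive proof of that lemma---in particular the observation that $c \leq v - c_\ell$ forces the top greedy coin for $v-c$ to remain $c_\ell$---is carefully done.
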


The final result considers the situation where a non-orderly coin system has another prefix coin system that is also non-orderly. This result brings up another concept (see, for example,~\cite{cai}).

\begin{definition}
 A coin system $\mathcal{C} = (1, c_2, c_3, \ldots, c_n)$ is {\bf tight} if it has no counterexample smaller than $c_n$.
\end{definition}

\begin{theorem}\cite{cai}\label{two-not-orderly}
Let $\mathcal{C}_1 = (1,c_2,c_3)$, $\mathcal{C}_2 = (1,c_2,c_3,\ldots ,c_m)$ and\\ $\mathcal{C}_3 = (1, c_2, c_3,\ldots, c_m, c_{m+1})$ be three tight coin systems such that $\mathcal{C}_1$ is orderly but $\mathcal{C}_2$ is not. If $\mathcal{C}_3$ is not orderly, then there is a counterexample $x = c_i + c_j > c_{m+1}$ of $\mathcal{C}_3$ with $1<c_i \leq c_j \leq c_m$.
\end{theorem}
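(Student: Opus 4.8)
The plan is to show that the \emph{smallest} counterexample of $\mathcal{C}_3$ is already of the form $c_i+c_j$. Let $w$ be that smallest counterexample. Since $\mathcal{C}_3$ is tight and $c_{m+1}$ is representable by one coin, $w>c_{m+1}$, and Theorem~\ref{subset} applied to $\mathcal{C}_3$ gives $w<c_m+c_{m+1}$; hence $r:=w-c_{m+1}$ satisfies $0<r<c_m$. Because $c_{m+1}\le w<2c_{m+1}$, the greedy algorithm in $\mathcal{C}_3$ takes exactly one coin $c_{m+1}$ and then runs greedy in $\mathcal{C}_2$ on $r$; as $r<c_m$ and $\mathcal{C}_2$ is tight, $grd_{\mathcal{C}_2}(r)=opt_{\mathcal{C}_2}(r)$, so $grd_{\mathcal{C}_3}(w)=1+opt_{\mathcal{C}_2}(r)$. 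Also, no optimal solution for $w$ in $\mathcal{C}_3$ can use $c_{m+1}$, since deleting such a coin would exhibit $r$ as a counterexample of $\mathcal{C}_3$ smaller than $w$. Therefore $opt_{\mathcal{C}_3}(w)=opt_{\mathcal{C}_2}(w)$, and the defining inequality $opt_{\mathcal{C}_3}(w)<grd_{\mathcal{C}_3}(w)$ becomes $opt_{\mathcal{C}_2}(w)\le opt_{\mathcal{C}_2}(w-c_{m+1})$.

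Next I would determine the shape of the optimal solution. Write the lexicographically smallest optimal solution for $w$ as $(0,\dots,0,x_i,\dots,x_j,0,\dots,0)_{\mathcal{C}_3}$ with $x_i,x_j>0$; the step above forces $j\le m$. If $i=1$, then Theorem~\ref{Pearson} requires the greedy representation of $c_2-1$ in $\mathcal{C}_3$ — which is simply $c_2-1$ pennies — to equal $(x_1-1,x_2,\dots,x_j,0,\dots,0)_{\mathcal{C}_3}$, forcing $j=1$ and $x_1=c_2$, hence $w=c_2$, contradicting $w>c_{m+1}$. So $2\le i\le j\le m$; in particular the optimal solution uses no penny.

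The decisive remaining step is to prove that the optimal solution for $w$ consists of exactly two coins, $x_i+x_{i+1}+\cdots+x_j=2$; granting this, $w=c_i+c_j$ with $1<c_i\le c_j\le c_m$ and $w>c_{m+1}$, as claimed. I would apply Theorem~\ref{Pearson} at the cut $c_{j+1}$: the greedy representation of $c_{j+1}-1$ in $\mathcal{C}_3$ is $(y_1,\dots,y_{i-1},x_i-1,x_{i+1},\dots,x_j,0,\dots,0)_{\mathcal{C}_3}$ with $y_t\ge 0$, and subtracting it from $w=\sum_{k=i}^{j}x_kc_k$ gives $\sum_{t=1}^{i-1}y_tc_t=c_i+c_{j+1}-1-w\ge 0$, so $w<c_i+c_{j+1}$ and $w-c_{j+1}<c_i$. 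Thus $w$ is confined to the narrow window $c_{m+1}<w<c_i+c_{j+1}$, and the relevant residue is represented using only coins below $c_i$. Supposing for contradiction that the optimal solution has three or more coins, I would peel off a coin from it (or, where the prefix structure permits, replace two coins by a single larger coin) to produce a value strictly between $c_{m+1}$ and $w$ whose coin count is small enough either to make it a counterexample of $\mathcal{C}_3$ — contradicting the minimality of $w$ — or to violate $w<c_i+c_{j+1}$. In these manipulations, the orderliness of $\mathcal{C}_1=(1,c_2,c_3)$, via Theorem~\ref{three}, governs how the greedy algorithm in $\mathcal{C}_2$ handles the low coins that appear in the residue, while Proposition~\ref{Chang} restricts which coins the greedy solution of $w$ can contain.

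I expect this last step to be the main obstacle. The single reduced inequality $opt_{\mathcal{C}_2}(w)\le opt_{\mathcal{C}_2}(w-c_{m+1})$ does not by itself forbid a three-coin optimum, so the argument must genuinely use the minimality of $w$ together with the precise description of the greedy representation of $c_{j+1}-1$ from Pearson's theorem, and must carefully track how deleting or exchanging coins interacts with the greedy algorithm in the tight prefix $\mathcal{C}_2$ and at the bottom of the system — which is exactly where the hypothesis that $\mathcal{C}_1$ is orderly is needed.
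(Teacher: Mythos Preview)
The paper does not prove this theorem at all: it is quoted as a background result from~\cite{cai} and is used as a black box in Sections~\ref{section+-+} and~\ref{--+}. So there is no in-paper proof to compare your proposal against.

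As for the proposal itself, your setup is sound. The reductions $w>c_{m+1}$ (tightness), $w<c_m+c_{m+1}$ (Theorem~\ref{subset}), $grd_{\mathcal{C}_3}(w)=1+opt_{\mathcal{C}_2}(r)$ (tightness of $\mathcal{C}_2$ on $r<c_m$), and $opt_{\mathcal{C}_3}(w)=opt_{\mathcal{C}_2}(w)$ (minimality of $w$) are all correct, and your use of Theorem~\ref{Pearson} to force $i\ge 2$ and to extract the bound $w<c_i+c_{j+1}$ is clean.

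However, the proof is genuinely incomplete at the step you yourself flag as the main obstacle: showing that the lexicographically smallest optimal solution for $w$ uses exactly two coins. Your sketch (``peel off a coin \ldots\ or replace two coins by a single larger coin'') is not an argument; you have not specified which coin to remove, why the resulting value lies above $c_{m+1}$, or why its greedy count exceeds its optimal count. In particular, removing one coin from a three-coin optimum for $w$ gives a value $w-c_k$ which could easily fall below $c_{m+1}$, and the tightness hypotheses say nothing about such values being counterexamples. Nor have you explained concretely how the orderliness of $\mathcal{C}_1=(1,c_2,c_3)$ enters; invoking Theorem~\ref{three} and Proposition~\ref{Chang} by name is not the same as using them. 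Until this step is actually carried out, what you have is a plausible outline rather than a proof.
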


There are several perspectives and extensions of the change-making problem which we leave untouched, since they do not pertain to our goals. Interested readers can check them out~\cite{Tien, chan, ChangGill, Goebbels, Rybin}. We note that~\cite{jones} contains apparently elegant necessary and sufficient conditions for a coin system to be orderly, but the result was later shown to be false~\cite{maurer}.

\section{Orderly coin systems with six coin values}\label{mainresult}
We are ready to prove Theorem~\ref{main}, which gives a characterization of orderly coin systems with 6 coin values. In light of Theorem~\ref{start+}, each such coin system has one of the following four patterns:
\begin{enumerate}
\begin{multicols}{2}
    \item $ + + + + - +$
    \item $+ + + - - +$
    \item $+ + + + + +$
    \item $+ + + - + + $
\end{multicols}
\end{enumerate}
Part (1) of the theorem considers coin systems with pattern $ + + + + - +$ and is proved in Section~\ref{section+-+}. Part (2) of the theorem considers the pattern $+ + + - - +$ and is proved in Section~\ref{--+}. Part (3), which considers the remaining two patterns, directly follows from the One Point Theorem (Theorem~\ref{OPT}). 

\subsection{Coin systems with pattern: $++++-+$}\label{section+-+}\hfill

In this section, we prove part (1) of Theorem~\ref{main}. We restate the result here for clarity.

\begin{theorem}\label{+-+} A coin system has pattern $++++-+$ if and only if it is one of the following:
\begin{enumerate}[label=\alph*.]
    \item $(1, 2, 3, a, a+1, 2a)$, where $a\geq 5$.

    \item $(1,a,2a,b,b+a, 2b)$, \\where $a\geq 2$, $b\geq 3a-1$, $b \neq 3a$, and  $grd(2ma) \leq m $ where $m = \lceil \frac{b}{2a} \rceil$.
    
    \item $(1,a,2a-1, b, b + a - 1, 2b - 1),$ \\where $a\geq 2$, $b\geq 3a-1$, and $grd(m(2a-1)) \leq m $ where $m = \lceil \frac{b}{2a-1} \rceil$.

\end{enumerate}
\end{theorem}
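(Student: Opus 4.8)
\medskip
\noindent\textbf{Overall plan.} I would prove the two implications of Theorem~\ref{+-+} separately, with essentially all of the difficulty in the forward one. Write $\mathcal{C}_4=(1,c_2,c_3,c_4)$ and $\mathcal{C}_5=(1,c_2,c_3,c_4,c_5)$, so that pattern $(++++-+)$ means $\mathcal{C}_4$ is orderly, $\mathcal{C}_5$ is not, and $\mathcal{C}$ is. By Theorem~\ref{four} the first of these upgrades to ``$\mathcal{C}_4$ is totally orderly'', so Theorem~\ref{three} restricts $c_3-c_2$ and the One Point Theorem (Theorem~\ref{OPT}) restricts $c_4$. Because $\mathcal{C}_4$ is orderly, $\mathcal{C}_5$ has no counterexample below $c_5$ (below $c_5$ the new coin is useless, so $\mathcal{C}_5$ behaves exactly like $\mathcal{C}_4$), and hence its smallest counterexample $w$ lies in the interval $(c_5,\,c_4+c_5)$ by Theorem~\ref{subset}. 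Moreover $w\ge c_6$: a counterexample of $\mathcal{C}_5$ below $c_6$ would survive as a counterexample of the orderly system $\mathcal{C}$. So $c_5<c_6\le w<c_4+c_5$, and in particular $\mathcal{C}_5$ is tight.

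\noindent\textbf{Constraining the jump to $c_5$.} The next step is to show $c_5\le 2c_4$, equivalently $\lceil c_5/c_4\rceil=2$. If $c_5>2c_4$ and also $c_4>2c_3$, then Theorem~\ref{jumps} (applied with the two big consecutive jumps $c_4>2c_3$, $c_5>2c_4$) gives $c_6-c_5\ge c_5-c_4$, so $c_6\ge 2c_5-c_4\ge c_4+c_5$, contradicting $c_6<c_4+c_5$; the remaining possibility $c_5>2c_4$ with $c_4\le 2c_3$ I would exclude by a direct argument on the admissible positions of $w$, using Theorems~\ref{firstjump} and~\ref{jumps} and the total orderliness of $\mathcal{C}_4$. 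Once $\lceil c_5/c_4\rceil=2$ is established, the One Point Theorem applied to $\mathcal{C}_4\subset\mathcal{C}_5$ shows that $2c_4$ is a counterexample of $\mathcal{C}_5$ with $grd_{\mathcal{C}_5}(2c_4)=1+grd_{\mathcal{C}_4}(2c_4-c_5)>2$; hence $2c_4-c_5$ needs at least two coins in $\mathcal{C}_4$, and $w\le 2c_4$, so now $c_5<c_6\le w\le 2c_4$.

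\noindent\textbf{Identifying $w$ and reading off the families.} Now apply Pearson's Theorem (Theorem~\ref{Pearson}) to $\mathcal{C}_5$ at $w$: the lexicographically least optimal representation of $w$ is $(0,\dots,0,x_i,\dots,x_j,0,\dots,0)_{\mathcal{C}_5}$ with $1\le i\le j\le 4$ and $x_i,x_j>0$, and the greedy representation of $c_{j+1}-1$ in $\mathcal{C}_5$ is obtained from it by lowering coordinate $i$ by one and filling coordinates $1,\dots,i-1$ greedily, while Proposition~\ref{Chang} forces the greedy representation of $w$ to avoid $c_i,\dots,c_j$ entirely. I would then run the case analysis on $j$ (which coin tops the optimal representation of $w$), using $c_5<w\le 2c_4$, $w<c_4+c_5$, and the gap inequalities to pin down the $x_k$. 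The outcome should be that $(c_2,c_3)$ is forced to be $c_3=2c_2$, $c_3=2c_2-1$, or $(c_2,c_3)=(2,3)$; that $c_5=c_4+(c_3-c_2)$ and $c_6\in\{2c_4,\,2c_4-1\}$ (this value being $w$); and that $c_4$ is otherwise free subject to $c_4\ge c_3+(c_2-1)$, with the single exclusion $c_4\ne c_3+c_2$ when $c_3=2c_2$, which is exactly where $\mathcal{C}_5$ turns out to be orderly (those systems belong to part (3) of Theorem~\ref{main}, not here). Finally, the requirement that $c_6$ annihilate \emph{every} counterexample of $\mathcal{C}_5$ in $(c_5,c_4+c_5)$, not just $w$, together with one more application of the One Point Theorem to $\mathcal{C}_4$, is what yields the $grd$-inequalities displayed in parts (b) and (c).

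\noindent\textbf{Converse, and the main obstacle.} For the converse I would take each of (a), (b), (c) in turn and check: (i) $\mathcal{C}_4$ is orderly --- $(1,c_2,c_3)$ by Theorem~\ref{three}, $(1,c_2,c_3,c_4)$ by the One Point Theorem, which is precisely where the displayed $grd$-hypotheses are used; (ii) $\mathcal{C}_5$ is not orderly, by exhibiting a counterexample (the value $2a$ for (a), and $2c_4$ for (b) and (c)); and (iii) $\mathcal{C}$ is orderly. For (iii): values below $c_5$ reduce to the orderly $\mathcal{C}_4$; values in $(c_5,c_6)$ need only a short symbolic comparison of greedy and optimal in terms of $a$ and $b$; and then, $(1,c_2,c_3)$ and $\mathcal{C}_5$ being tight and $\mathcal{C}$ having (by the checks just made) no counterexample below $c_6$, Theorem~\ref{two-not-orderly} shows that a counterexample of $\mathcal{C}$, if one existed, would be a sum $c_i+c_j>c_6$ with $2\le i\le j\le 5$, so it remains only to verify that those finitely many sums are not counterexamples. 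The real obstacle is the forward case analysis: Theorem~\ref{five} classifies orderly $5$-coin systems but says nothing structural about non-orderly ones, so the shape of $w$ must be extracted by hand from Pearson's Theorem and the gap inequalities, and keeping ``$w$ is the minimum counterexample of $\mathcal{C}_5$'' consistent with ``$c_5<c_6\le w\le 2c_4$'' across all admissible pairs $(i,j)$ is where the bookkeeping is heaviest.
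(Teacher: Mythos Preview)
Your converse is essentially the paper's: verify the prefix structure directly and then invoke Theorem~\ref{two-not-orderly} to reduce orderliness of $\mathcal{C}$ to checking the finitely many sums $c_i+c_j$. (The paper happens to run Pearson's algorithm explicitly for family (a), but your uniform use of Theorem~\ref{two-not-orderly} works there too.)

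The forward direction, however, diverges from the paper in a way that leaves a real gap. The paper does \emph{not} first prove $c_5\le 2c_4$ and then analyze $w$ via Pearson on $\mathcal{C}_5$. Instead it uses the orderliness of the full system $\mathcal{C}$ directly: since $c_6\le mc_4<c_4+c_5<2c_5$, the two values $2c_5$ and $c_4+c_5$ must each have two-coin greedy solutions in $\mathcal{C}$, forcing
\[
c_6+c_j=2c_5,\qquad c_6+c_i=c_4+c_5\qquad(1\le i<j\le 4).
\]
Subtracting gives $c_5-c_4=c_j-c_i<c_4$, hence $m=2$, \emph{and} these same equations drive the subsequent case split on $j\in\{2,3,4\}$, which reads off $c_5$ and $c_6$ simultaneously. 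Your plan to get $m=2$ via Theorem~\ref{jumps} handles only the subcase $c_4>2c_3$; the leftover case $c_5>2c_4$ with $c_4\le 2c_3$ is genuinely nonvacuous (e.g.\ $\mathcal{C}_5=(1,3,5,9,20)$ has pattern $(++++-)$), and the obstruction to extending it to an orderly $6$-coin system is precisely that $2c_5$ would become a counterexample---i.e.\ the paper's equation $c_6+c_j=2c_5$ fails---not anything about the location of $w$ or Theorems~\ref{firstjump}/\ref{jumps}. So the ``direct argument'' you defer is exactly the idea the paper uses from the start.

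A smaller point: your identification $c_6=w$ is not correct in family (c). There $w=2c_4=2b$ (the paper checks this via Pearson), while $c_6=2b-1<w$. Pearson on $\mathcal{C}_5$ constrains $c_2,\dots,c_5$ and $w$, but pinning down $c_6$ still requires bringing in the orderliness of $\mathcal{C}$ at values like $2c_5$ and $c_4+c_5$---which again is the paper's two-equation device.
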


\begin{proof}
Suppose a coin system has the form $++++-+$. Since $(1, c_2, c_3, c_4)$ is orderly and $(1, c_2, c_3, c_4, c_5)$ is {\em not} orderly, we know by the One Point Theorem that $mc_4$ is a counterexample for $(1, c_2, c_3, c_4, c_5)$ where $m = \lceil \frac{c_5}{c_4} \rceil$. Moreover, it must be the case that $c_6\leq  mc_4$, for otherwise $\mathcal{C}$ would not be orderly. 

Now observe that
$$c_6 \leq mc_4 = \lceil \tfrac{c_5}{c_4} \rceil c_4 < (\tfrac{c_5}{c_4}+1)c_4 = c_5 + c_4 < c_5 + c_5.$$
Therefore 
\begin{align*}
c_6 &< c_5 + c_5, \\
c_6 &< c_5 + c_4.
\end{align*}
Now since $(1, c_2, c_3, c_4, c_5, c_6)$ is orderly, there must exist coins $c_i$ and $c_j$ where $1\leq i<j \leq 4$ such that 
\begin{align}
   c_6 + c_j &= c_5 + c_5\label{1}, \\
      c_6 + c_i &= c_5 + c_4 \label{2}.
\end{align}
Subtracting these two equations, we have
\begin{align*}
    c_5-c_4 &= c_j-c_i < c_j\leq c_4
\end{align*}
Therefore, $c_5 \leq 2c_4$. We conclude that $1< \frac{c_5}{c_4} \leq 2$, which implies that $m=\lceil \tfrac{c_5}{c_4} \rceil=2$.

We now have three main cases to consider for the possible values of $j$ (and within each case, we will consider the values $i<j$): Case (1) $j=4$, Case (2) $j=3$, and Case (3) $j=2$.

{\bf Case 1: $\boldsymbol{j = 4}$.} Plugging in and combining Equations~\ref{1} and~\ref{2} above, we have 
$c_5 + c_i = 2c_4$. This shows that $2c_4$ is not a counterexample of $(1, c_2, c_3, c_4, c_5)$, which is a contradiction. Therefore, $j=4$ is impossible.

{\bf Case 2a: $\boldsymbol{j=3}$ and $\boldsymbol{i=1}$.} 
Equations~\ref{1} and~\ref{2} above become:
\begin{align*}
    c_6+c_3 &=c_5 +c_5,\\
    c_6 + 1 &= c_5 + c_4.
\end{align*}
Observe that $c_5 + c_4 = c_6 + 1 >2c_4 \geq c_6$ which implies that $2c_4 = c_6$. Setting $c_4 = a$, it follows that $c_6 = 2a$, $c_5 = a+1$, and $c_3 = 2$. However, since $1<c_2<c_3$, we are left without any possible value for $c_2$, which is a contradiction. Therefore, it is impossible to have $j=3$ and $i=1$.

{\bf Case 2b: $\boldsymbol{j=3}$ and $\boldsymbol{i=2}$.} 
Equations~\ref{1} and~\ref{2} become:
\begin{align*}
    c_6+c_3 &=c_5 +c_5,\\
    c_6 +c_2 &= c_5 + c_4.
\end{align*}
Subtracting these two equations. we have
\begin{align*}
    c_5-c_4 &= c_3-c_2 = d
\end{align*}
where we define $d$ is the gap between the coins. Simplifying the equations, we conclude that
\begin{itemize}
\begin{multicols}{3}
 \item[~]   $c_3 = c_2+d$
   \item[~]  $c_5 = c_4+d$
   \item[~]  $c_6 = 2c_4-c_2+d$
    \end{multicols}
\end{itemize}

So then, the coin system is as follows, 
$$(1, c_2, c_3, c_4, c_5, c_6) = (1,c_2,c_2+d,c_4,c_4+d, 2c_4-c_2+d)$$
By Theorem~\ref{firstjump}, we know $d\geq c_2 - 1$. On the other hand, because $c_6\leq 2c_4$, it follows that $d\leq c_2$. Therefore, there are only two possibilities: $d = c_2$ or $d = c_2-1$. So the coin system has one of two possible forms:
$$\mathcal{C} = (1,c_2,2c_2,c_4,c_4+c_2, 2c_4) ~~~\text{ or }~~~ \mathcal{C}' =(1,c_2,2c_2-1, c_4, c_4 + c_2 - 1, 2c_4 - 1).$$

There are further constraints on $c_4$. First, Theorem~\ref{firstjump} tells us that the skip between the third and fourth coins is at least $c_2-1$. Therefore, the coin system $\mathcal{C}$  satisfies $c_4\geq 3c_2-1$, and the coin system $\mathcal{C}'$ satisfies $c_4\geq 3c_2-2$. 

Because $2c_4$ is a counterexample for the 5-value coin system, the difference $2c_4 - c_5$ is not a coin value in the coin system. Therefore, for $\mathcal{C}$ we know $c_4 \neq 3c_2$, and $\mathcal{C}'$ satisfies $c_4\neq 3c_2-2$.

Finally, since the 4-value coin system is orderly, we know by the One Point Theorem that the coin system $\mathcal{C}$ satisfies  $grd_\mathcal{C}(2mc_2) \leq m $ where $m = \lceil \frac{c_4}{2c_2} \rceil$, and the coin system $\mathcal{C}'$ satisfies $grd_{\mathcal{C}'}(m(2c_2-1)) \leq m$ where $m= \lceil \frac{c_4}{2c_2-1} \rceil$. 

Setting $c_2=a$ and $c_4=b$, the coin systems $\mathcal{C}$ and $\mathcal{C}'$ coincide with parts (b) and (c) in the statement of the Theorem, respectively.

{\bf Case 3: $\boldsymbol{j = 2}$.} In this case, we must have $i=1$. Equations~\ref{1} and~\ref{2} become
\begin{align*}
    c_6+c_2 &=c_5 +c_5,\\
    c_6 + 1 &= c_5 + c_4.
\end{align*}
Now observe that $c_5 + c_4 = c_6+1 > 2c_4 \geq c_6$. This forces $2c_4 = c_6$. Setting $c_4 = a$, and simplifying all these equations, we conclude that
\begin{itemize}
\begin{multicols}{3}
 \item[~]   $c_2 = 2$
   \item[~]  $c_5 = a+1$
   \item[~]  $c_6 = 2a$
    \end{multicols}
\end{itemize}
So then, gathering everything together, we conclude that 
$$(1, c_2, c_3, c_4, c_5, c_6) = (1,2,b,a,a+1, 2a)$$
for some integers $a$ and $b$ with $2<b<a$.

We now use Theorem~\ref{jumps} to show that the values of $b$ and $a$ cannot both be large. Suppose that $b>4$ and $a>2b$. Then Theorem~\ref{jumps} implies that 
$$(a+1) - a \geq a-b$$
which simplifies to $a\leq b+1$, but this contradicts $a>2b$.
Therefore, we cannot have both $b>4$ and $a>2b$, so it must be the case that $b\leq 4$ or $a \leq 2b$.

Suppose that $b > 4$ and $b < a \leq 2b$. We can rule out this possibility by observing that the resulting coin system $(1, 2, b, a, a+1, 2a)$ will have $c=a+b$ as a counterexample, so the coin system is not orderly.

Finally, suppose that $b\leq 4$, which implies that $b=3$ or $b=4$. We can immediately rule out the possibility that $b=4$, since in that case our coin system is $(1, 2, 4, a, a+1, 2a)$ and the value $c = a+4$ is a counterexample, making our coin system non-orderly. If $b=3$, we obtain the coin system $(1, 2, 3, a, a+1, 2a)$. If $a=4$ the system is totally orderly, so we must have $a\geq 5$, as desired. 

Conversely, suppose that we have a coin system that is in one of the three forms stated in the theorem. We check that the coin systems have the pattern $++++-+$. 

We first show the coin system $(1, 2, 3, a, a+1, 2a)$ where $a\geq 5$ has pattern $++++-+$. The coin system $(1,2,3)$ is orderly by Theorem~\ref{three}. The fact that $(1,2,3,a)$ is orderly for $a\geq 5$ follows from Theorem~\ref{OPT} and the fact that $(1,2,3)$ is orderly. The 5-value prefix coin system $(1, 2, 3, a, a+1)$ for $a\geq 5$ is not orderly because $2a$ is a counterexample (this is where we need the assumption that $a\geq 5$).

All that remains is to show that the 6-value coin system is orderly. Theorem~\ref{Pearson} enables us to find the minimum counterexample (if it exists) as follows. First, set  $\mathcal{C}=(1,c_2,c_3,c_4,c_5,c_6)=(1, 2, 3, a, a+1, 2a)$. We find the greedy representation of $c_{k}-1$ for $1< k \leq 6$ and express it as a vector with respect to $\mathcal{C}$. By Theorem~\ref{Pearson}, the minimum counterexample (if it exists) is found as follows. For each vector corresponding to $c_k-1$ and for each $p$ where $1\leq p < k-1$, set the first $p$ entries of the vector to 0, and add 1 to the $p+1^{th}$ entry. The resulting values are the candidates for minimal counterexample. 

Doing this process to the vector associated to $c_2-1$ is not possible because we would have to have $1\leq p< 1$ (indeed, it is always true that the case $c_2 - 1$ may be disregarded). Doing this process for $c_3-1= (0,1,0,0,0,0)_\mathcal{C}$ with $p=1$ yields the vector 
$(0,2,0,0,0,0)_\mathcal{C}$, which corresponds to the value 4. We know 4 is not a counterexample, since the greedy solution for $4$ uses only 2 coins.

Next we consider $c_4-1=a-1$. The greedy solution for this value can be expressed in vector form as $(x_1,x_2,x_3,0,0,0)_\mathcal{C}$ where $x_1+2x_2+3x_3=a-1$ and $x_1, x_2 \in\{0,1\}$ and $x_1x_2=0.$ With $p=1$, we have $(0,x_2+1,x_3,0,0,0)_\mathcal{C},$ which corresponds to the value:
\begin{align*}
2(x_2+1)+3x_3 &=\begin{cases}
    a &\text{ if } x_1=1 \\
    a+1 &\text{ if } x_1=0 
\end{cases}
\end{align*}
For $p=2$, we have $(0,0,x_3+1,0,0,0)_\mathcal{C},$ which corresponds to the value:
\begin{align*}
3(x_3+1) =\begin{cases}
    a&\text{ if } x_1=0, x_2=1\\
    a+1 &\text{ if } x_1=1, x_2=0\\
    a+2&\text{ if } x_1=0, x_2=0\\
\end{cases}
\end{align*}
None of these values $a$, $a+1$, or $a+2$ are counterexamples, because each has a greedy solution with 1 or 2 coins. 

Next up, the value $c_5-1 = a$ has greedy representation $(0,0,0,1,0,0)_\mathcal{C}$ and yields three candidates for counterexample (corresponding to $p=1,2,3$):
\begin{align*}
(0,1,0,1,0,0)_\mathcal{C}&=a+2\\
(0,0,1,1,0,0)_\mathcal{C}&=a+3\\
(0,0,0,2,0,0)_\mathcal{C}&=2a
\end{align*}
Again, the values $a+2$, $a+3$, and $2a$ are not counterexamples, since each has a greedy solution with 1 or 2 coins. 

Finally, we consider the vector associated to $c_6-1=2a-1$. The greedy solution has vector representation $(y_1,y_2, y_3,0,1,0)_\mathcal{C}$ where $y_1 +2y_2+3y_3=a-2$ and  $y_1, y_2 \in\{0,1\}$ and  $y_1y_2=0.$ With $p=1$, we have the vector $(0,y_2+1,y_3,0,1,0)_\mathcal{C},$ which corresponds to the value: 
\begin{align*}
2(y_2+1)+3y_3+a+1=\begin{cases}
    2a &\text{ if } y_1=1\\
    2a+1 &\text{ if } y_1=0\\
\end{cases}
\end{align*}
For $p=2$, we have the vector $(0,0,y_3+1,0,1,0)_\mathcal{C},$ which corresponds to the value:
\begin{align*}
3(y_3+1) + a+1=
\begin{cases}
    2a &\text{ if } y_1=0, y_2=1\\
    2a+1 &\text{ if } y_1=1, y_2=0\\
    2a+2 &\text{ if } y_1=0, y_2=0\\
\end{cases}
\end{align*}
For $p=3$ and $p=4$, we have:
\begin{align*}
(0,0,0,1,1,0)_\mathcal{C}&=2a+1\\
(0,0,0,0,2,0)_\mathcal{C}&=2a+2
\end{align*}
Altogether from these four cases, we generate three possible candidates for counterexample: $2a$, $2a+1$, and $2a+2$. None of these are counterexamples, since each has a greedy solution with 1 or 2 coins. 

Because the process found no minimum counterexample, it must be the case that $(1, 2, 3, a, a+1, 2a)$ is orderly for $a\geq 5$, as claimed.

Now let $\mathcal{B}$ and $\mathcal{B}'$ denote coin systems in the form of (b) and (c) in the statement of the theorem, respectively. Theorems~\ref{OPT} and~\ref{three} imply that the 4-value prefix coin systems of $\mathcal{B}$ and $\mathcal{B}'$ are both  totally orderly. Furthermore, the 5-value prefix coin systems for $\mathcal{B}$ and for $\mathcal{B}'$ have counterexample $2b$, so they are not orderly. Hence, all that remains is to show that the 6-value coin systems $\mathcal{B}$ and $\mathcal{B}'$ are orderly. 

Using Theorem~\ref{Pearson}, it follows that $2b$ is the minimum counterexample of the 5-value prefix coin systems of both $\mathcal{B}$ and $\mathcal{B}'$. Thus the 5-value prefix coin systems are both tight. Now suppose that $\mathcal{B}$ or $\mathcal{B}'$ is not orderly. Then by Theorem~\ref{two-not-orderly}, it follows that the coin system has a counterexample of the form $c_i + c_j$ for some coin values $c_i$ and $c_j$ in $\mathcal{B}$ or in $\mathcal{B}'$, where $1\leq i\leq j\leq 5$. However, a straightforward check of these values reveals that $c_i + c_j$ is never a counterexample. Hence it must be the case that the coin systems $\mathcal{B}$ and $\mathcal{B}'$ are orderly, as claimed.
\end{proof}
\vspace{.2cm}

\subsection{Coin systems with pattern: $+++--+$}~\label{--+}

In this section, we prove part (2) of Theorem~\ref{main}. We first prove a lemma.

\begin{lemma}\label{start}
If $\mathcal{C} = (1,c_2,c_3,c_4,c_5,c_6)$ has pattern $+++--+$, then the coin system has one of the following forms:
\begin{enumerate}
    \item $(1,a,2a-1,b,a+b-1,2b-1)$ where $a \geq 3$ and $b>2a-1$.
    \item $(1,a,2a, b, a+b,2b)$ where $a \geq 2$ and $b>2a$.

\end{enumerate}
\end{lemma}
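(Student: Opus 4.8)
\textbf{Proof proposal for Lemma~\ref{start}.}

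The plan is to mimic the opening moves of the proof of Theorem~\ref{+-+}, now adapted to the pattern $(+++--+)$. Since $\mathcal{C}$ has pattern $(+++--+)$, the prefix $(1,c_2,c_3,c_4)$ is orderly, the prefix $(1,c_2,c_3,c_4,c_5)$ is not, and $\mathcal{C}$ itself is orderly. First I would exploit the $(+++-)$ part: by the One Point Theorem applied to the orderly system $(1,c_2,c_3)$ and the non-orderly $(1,c_2,c_3,c_4)$, the value $mc_3$ with $m = \lceil c_4/c_3\rceil$ is a counterexample of $(1,c_2,c_3,c_4)$. Running Theorem~\ref{Pearson} on $(1,c_2,c_3)$ (which is in the known 3-coin form of Theorem~\ref{three}) pins down $c_3$ in terms of $c_2$: since $(1,c_2,c_3)$ is orderly, writing $a=c_2$ we must have $c_3 - c_2 \in \{a-1,a\}$ at the relevant level, and the requirement that $(1,c_2,c_3,c_4)$ fail forces $c_3 = 2a-1$ or $c_3 = 2a$, with $c_4$ in the corresponding arithmetic range. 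This is exactly the dichotomy between the two cases (1) and (2) in the statement, and it also handles $c_4$: in case $c_3 = 2a-1$ we need $c_4 \geq 3a-2$ (from Theorem~\ref{firstjump}) while the structure of the minimum counterexample $2c_3$ will further constrain $c_4$; similarly for $c_3 = 2a$.

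Next I would determine $c_5$ and $c_6$. Because $(1,c_2,c_3,c_4,c_5)$ is not orderly but $(1,c_2,c_3,c_4)$ is, the One Point Theorem again gives that $m' c_4$ is a counterexample of the 5-coin prefix, where $m' = \lceil c_5/c_4\rceil$, and orderliness of $\mathcal{C}$ forces $c_6 \leq m' c_4$. Arguing as in the $(++++-+)$ proof, the bound $c_6 \leq m'c_4 < c_5 + c_4 < 2c_5$ shows $c_6 < c_5 + c_5$ and $c_6 < c_5 + c_4$; since $\mathcal{C}$ is orderly, there must be coins $c_i, c_j$ with $1 \le i < j \le 4$ with $c_6 + c_j = 2c_5$ and $c_6 + c_i = c_5 + c_4$ — except that now I must be careful, because here $(1,c_2,c_3,c_4)$ being non-orderly means I should instead use Theorem~\ref{two-not-orderly} together with tightness. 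So the cleaner route: show the 4-coin prefix and 5-coin prefix are tight (their minimum counterexamples are $2c_3$ and $m'c_4$ respectively, via Theorem~\ref{Pearson}), then invoke Theorem~\ref{two-not-orderly} to conclude that if $\mathcal{C}$ fails it fails at some $c_i + c_j > c_6$ with $1 < c_i \le c_j \le c_5$; since $\mathcal{C}$ is in fact orderly, I instead use the contrapositive-style bookkeeping on which sums $c_i+c_j$ can equal $c_5 + (\text{small})$ to extract the fixed-gap relations $c_5 = c_4 + (c_3 - c_2)$ and $c_6 = 2c_4 + (c_3 - c_2) - c_2$ in the $c_3 = 2a$ case, and the analogous shifted-by-one relations in the $c_3 = 2a-1$ case.

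Concretely, substituting $c_3 - c_2 = d$ with $d \in \{a, a-1\}$ (forced by Theorems~\ref{firstjump} and the upper bound $c_6 \le m'c_4 \le 2c_4$), I expect to land on exactly $c_5 = c_4 + d$ and $c_6 = 2c_4 - c_2 + d$, which with $(a,b) = (c_2, c_4)$ and $d = a$ gives form (2) $= (1,a,2a,b,a+b,2b)$, and with $d = a-1$ gives form (1) $= (1,a,2a-1,b,a+b-1,2b-1)$. The side conditions $a \ge 3$ in case (1) and $a \ge 2$ in case (2) come from ruling out degenerate small cases (e.g., $a = 2$ with $c_3 = 2a-1 = 3$ would put us back in a $(++++\cdots)$-type situation or violate $c_2 < c_3$ constraints downstream), and $b > 2a-1$ resp. $b > 2a$ is needed so that the second gap is genuinely "large" — i.e., so that the pattern is $(+++--+)$ and not $(++++-+)$, which is where Lemma~\ref{start} differs from Theorem~\ref{+-+}.

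The main obstacle I anticipate is the case analysis on $(i,j)$ in the equations $c_6 + c_j = 2c_5$, $c_6 + c_i = c_5 + c_4$: several of the nine $(i,j)$ possibilities will need to be killed either by contradiction with $c_2 < c_3$, by producing an explicit small counterexample to orderliness of $\mathcal{C}$, or by showing $2c_4$ (resp. $2c_3$) would cease to be a counterexample of the relevant prefix — contradicting the pattern. Keeping straight which prefixes are tight (so Theorem~\ref{two-not-orderly} applies) versus merely orderly, and correctly handling the two parallel tracks $c_3 = 2a$ and $c_3 = 2a-1$ simultaneously, is the delicate bookkeeping; everything else is arithmetic of the same flavor as the already-completed $(++++-+)$ argument.
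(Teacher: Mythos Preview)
Your proposal has the right instinct --- tightness plus Theorem~\ref{two-not-orderly} are indeed the engine --- but the execution contains two real gaps.

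First, the claim that the $(+++-)$ fragment alone forces $c_3\in\{2a-1,2a\}$ is false. For instance $(1,3,7)$ is orderly (Theorem~\ref{three}, since $7-3=4\in\{4,5,6\}$) while $(1,3,7,8)$ is not (the value $14$ is a counterexample), yet $c_3=7\notin\{5,6\}$. The constraint $c_3\in\{2a-1,2a\}$ does \emph{not} come from the failure of the 4-coin prefix; it is forced only by the orderliness of the full 6-coin system, and your argument tries to extract it too early.

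Second, both of your applications of the One Point Theorem and of Theorem~\ref{two-not-orderly} are aimed at the wrong systems. You invoke the One Point Theorem to say $m'c_4$ is a counterexample of the 5-coin prefix, but that theorem requires the 4-coin prefix to be orderly, which here it is not; so the bound $c_6\le m'c_4$ is unjustified. Then you apply Theorem~\ref{two-not-orderly} with $\mathcal{C}_3$ equal to the 6-coin system, but the 6-coin system is orderly, so the hypothesis fails. The correct move (and what the paper does) is to apply Theorem~\ref{two-not-orderly} with $\mathcal{C}_1=(1,c_2,c_3)$, $\mathcal{C}_2=(1,\dots,c_4)$, $\mathcal{C}_3=(1,\dots,c_5)$: this yields a counterexample $c_s+c_t$ of the 5-coin prefix with $c_s,c_t\le c_4$, hence $c_6\le c_s+c_t\le 2c_4$. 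That bound is strictly stronger than what the two-equation setup from Theorem~\ref{+-+} gives; it produces \emph{three} equations,
\[
2c_4=c_6+c_i,\qquad c_4+c_5=c_6+c_j,\qquad 2c_5=c_6+c_k,
\]
with $0\le i<j<k\le 4$ (allowing $c_0=0$), and the subtraction $c_5-c_4=c_k-c_j=c_j-c_i$ is what simultaneously pins down $c_3$, $c_5$, and $c_6$. The case analysis is then on the middle index $j\in\{1,2,3\}$, with $j=1$ and $j=3$ ruled out and $j=2$ splitting into the two displayed forms. Your two-equation version, even if its premises were valid, would not recover the crucial relation $2c_4=c_6+c_i$ and so would not close the case analysis.
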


\begin{proof}
Because the 6-value coin system is orderly, both the 4-coin and 5-value coin systems are tight. By Theorem~\ref{two-not-orderly}, there must exist a counterexample for the 5-value coin system $(1,c_2,c_3,c_4,c_5)$ of the form $c_s + c_t$ for some $1\leq s,t\leq 4$. And because the 6-value coin system is orderly, it follows that $c_6 \leq c_s + c_t \leq c_4+c_4$.

Thus there exists $0\leq i < j < k \leq 4$ such that 
\begin{align*}~\label{system}
    c_4+c_4 &= c_6 + c_i\\
    c_4+c_5 &= c_6 + c_j\\
    c_5+c_5 &= c_6 + c_k
\end{align*}
(We set $c_0 = 0$.) Subtracting adjacent equations, we find
$$c_5-c_4 = c_k - c_j = c_j - c_i.$$

The possible values for $j$ are: $1, 2,$ and $3$. We will rule out the possibility that $j=1$ or $j=3$ first, and then we will study the case $j=2$.

{\bf Case 1: $\boldsymbol{j = 1}$}. If $j=1$, it follows that $i = 0$ and hence $c_k = 2c_1 - c_0 = 2$. Thus it must be the case that $k=2$. Substituting the values $(i,j,k) = (0,1,2)$ into the above system of equations, we find the following relationships: $c_5 = c_4+1$ and $c_6 = 2c_4$. So the coin system is $(1,2,c_3, c_4, c_4+1, 2c_4)$. Since the coin system is orderly, the value $c_3+c_4$ should have a greedy solution with just 2 coins. Hence $(c_3+c_4) - (c_4+1)$ must be a value in the coin system. This forces either $c_3 = 2$ or $c_3=3$. The former option leaves no possible value for $c_2$. For the latter option, the first three coin values are $(1,2,3)$. Thus, no matter the value for $c_4$, the 4-value coin system $(1,2,3,c_4)$ is orderly, which is a contradiction. Hence $j=1$ is not possible. 

{\bf Case 2: $\boldsymbol{j = 3}$}. In this case, we must have $k=4$. Considering the possibilities $i=0,1,2$, we know $c_4 - c_3$ equals one of the elements in the set $\{c_3, c_3-1, c_3-c_2\}$. This forces the value of $c_4$ to be one of the following: $2c_3$ or $2c_3-1$ or $2c_3-c_2$. However, using the One Point Theorem, if the 3-value coin system $(1,c_2,c_3)$ is orderly, the resulting coin system $(1,c_2,c_3,c_4)$ is orderly as well, if $c_4$ is any of the options above. Therefore, it is impossible to have $j=3$.

{\bf Case 3: $\boldsymbol{j = 2}$}.  In this case, the possibilities for $i$ and $k$ and the subsequent equation $c_2 - c_i=c_k - c_2$ are:
\begin{enumerate}
\begin{multicols}{2}
     \item $i = 0$, $k = 3$, and $c_2 = c_3 - c_2$
     \item $i = 1$, $k = 3$, and $c_2-1 = c_3 - c_2$
    \item $i = 0$, $k = 4$, and $c_2 = c_4 - c_2$
    \item $i = 1$, $k = 4$, and $c_2-1 = c_4 - c_2$
\end{multicols}
\end{enumerate}
Option (1) above implies that $c_3 = 2c_2$. Plugging the values for $i, j, k$ into the system of equations, we also have $c_5 = c_4+c_2$ and $c_6 = 2c_4$. Thus the coin system is in the form of the second possibility in the statement of the lemma.

Option (2) above implies that $c_3 = 2c_2 - 1$. Moreover, observe that $c_3 = 2c_2 - 1$ is possible only if $c_2\geq 3$. For, if $c_2 = 2$, then there is no value of $c_4$ for which the 4-value coin system is not orderly. Plugging the values for $i, j, k$ into the system of equations, we also have $c_5 = c_4+c_2-1$ and $c_6 = 2c_4-1$. Thus the coin system is in the form of the first possibility in the statement of the lemma. 

Options (3) and (4) cannot occur. For, (3) and (4) imply that $c_4 = 2c_2$ or $c_4 = 2c_2-1$, respectively. On the other hand, we know from Theorem~\ref{firstjump} that $c_4\geq 3c_2 - 2$. Applying this inequality, we conclude that $c_2\leq 2$ and $c_2\leq 1$, respectively. The former option implies $c_2 = 2$ and $c_4 = 4$, which forces $c_3= 3,$ and the 4-value coin system is necessarily orderly, which is a contradiction. The latter option ($c_2\leq 1$) is impossible, since $c_2>1$.  Thus neither (3) nor (4) above are possible. This proves the lemma.
\end{proof}

Now we are ready to prove part (2) of Theorem~\ref{main}. We restate the result here for clarity.

\begin{theorem}\label{prove--+}
A coin system has pattern $+++--+$ if and only if it is one of the following:
\begin{enumerate}
    \item $\mathcal{C}=(1,a,2a-1,m(2a-1)-a+1,m(2a-1),(2m-1)(2a-1))$\\ where $1<m<a$.
    \item $\mathcal{C'}=(1,a,2a,m(2a-1)-a+1,m(2a-1)+1,(2m-1)(2a-1)+1)$\\ where $1<m \leq a$.
\end{enumerate}
\end{theorem}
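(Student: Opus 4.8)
The starting point is Lemma~\ref{start}, which already restricts us to the two families $\mathcal{C}=(1,a,2a-1,b,a+b-1,2b-1)$ with $a\ge 3$, $b>2a-1$, and $\mathcal{C}'=(1,a,2a,b,a+b,2b)$ with $a\ge 2$, $b>2a$; what remains is to determine which values of $b$ actually occur. The plan for the forward direction is to exploit two facts at once: the $4$-value prefix is \emph{not} orderly, which by the One Point Theorem (Theorem~\ref{OPT}) means $grd(mc_3)>m$ with $m=\lceil c_4/c_3\rceil$, and the full $6$-value system \emph{is} orderly, so $mc_3$ is not a counterexample of it. For the reverse direction the plan is to check directly that each listed system has pattern exactly $(+++--+)$, using Theorem~\ref{three} for the $3$-value prefix, the One Point Theorem for the $4$-value prefix, and Theorems~\ref{Pearson} and~\ref{two-not-orderly} for the $5$- and $6$-value systems.

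Carrying out the forward direction: write $c_4=mc_3-r$ with $0\le r<c_3$, which is forced by $(m-1)c_3<c_4\le mc_3$. Since $c_4\le mc_3=c_4+r<c_4+c_3<2c_4$, the greedy representation of $mc_3$ in the $4$-value prefix is one $c_4$-coin followed by the greedy representation of $r$ in the orderly system $(1,c_2,c_3)$, so the condition $grd(mc_3)>m$ becomes an explicit inequality on $r$, with the sub-cases $r<c_2$ and $c_2\le r<c_3$. The key observation is that in the $6$-value system the coin $c_5$ can appear in a representation of $mc_3$ only when $c_5\le mc_3$, which is a one-line inequality in $r$; a short calculation then shows that for every admissible $r$ other than one particular value, the greedy count for $mc_3$ in the $6$-value system still strictly exceeds $opt(mc_3)\le m$, contradicting orderliness. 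This forces $r=c_2-1$ in the first family and $r=m+c_2-1$ in the second, and substituting back, both families collapse to $c_4=m(2a-1)-a+1$ — hence to $c_5=m(2a-1)$, $c_6=(2m-1)(2a-1)$ in the first case and $c_5=m(2a-1)+1$, $c_6=(2m-1)(2a-1)+1$ in the second — which are exactly the two systems in the statement; the ranges $1<m<a$ and $1<m\le a$ follow from $m=\lceil c_4/c_3\rceil$ together with $c_4>c_3$.

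For the reverse direction: in each listed system $(1,c_2,c_3)$ is orderly by Theorem~\ref{three} because $c_3-c_2\in\{c_2-1,c_2\}\subseteq\mathcal{A}$, and the $4$-value prefix is non-orderly because the greedy representation of $mc_3$ uses $a$ coins in the first family and $m+1$ coins in the second, both more than $m$. For the remaining two prefixes I would run Pearson's procedure (Theorem~\ref{Pearson}) on the greedy representations of the values $c_k-1$: this shows $2c_4$ is the minimum counterexample of the $5$-value prefix, so that prefix is tight and, since the new coin satisfies $c_6\in\{2c_4-1,2c_4\}$, the $6$-value system has no counterexample below $c_6$. Applying Theorem~\ref{two-not-orderly} to the triple $(1,c_2,c_3)$, the $5$-value prefix, and the $6$-value system (all tight, the first orderly and the second not) then reduces orderliness of the $6$-value system to checking the finitely many sums $c_i+c_j$ with $1<c_i\le c_j\le c_5$ and $c_i+c_j>c_6$; only $2c_4$, $c_4+c_5$, $2c_5$, and (in the second family with $m=2$) $c_3+c_5$ can exceed $c_6$, and each of these equals $c_6+c_\ell$ for some coin $c_\ell$, hence has a two-coin greedy solution realizing the optimum, so the $6$-value system is orderly.

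The main obstacle is the forward-direction step in which $r$ is pinned to a single value: one must verify that \emph{every} other admissible $r$ leaves $mc_3$ a genuine counterexample of the $6$-value system, which means juggling several inequalities in $a$ and $m$ at once and treating the sub-cases $r<c_2$ and $c_2\le r<c_3$ separately. The reverse direction is largely mechanical, but the greedy representations of the $c_k-1$ carry floor functions in $a$ and $m$, and the list of sums $c_i+c_j$ exceeding $c_6$ shifts at $m=2$, so the enumeration must be done carefully to be complete.
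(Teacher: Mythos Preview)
Your forward direction is essentially the paper's argument in different coordinates: your remainder $r=mc_3-c_4$ is exactly the paper's $y$, and your analysis of whether greedy can reach $c_5$ is the paper's inequality $grd(z)\le m-1<grd(y)$ with $z=mc_3-c_5$. One imprecision: the upper bounds $m<a$ and $m\le a$ do \emph{not} follow merely from $m=\lceil c_4/c_3\rceil$ and $c_4>c_3$. In the second family the bound $m\le a$ drops out of $r=m+a-1<c_3=2a$, but in the first family $r=a-1<c_3$ holds for all $m$, and the bound $m<a$ comes instead from the non-orderliness of the $4$-prefix (greedy for $mc_3$ there uses $1+(a-1)=a$ coins, which must exceed $m$).

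Your reverse direction takes a genuinely different route from the paper. The paper defers to the general fixed-gap results (Theorems~\ref{E} and~\ref{F} with $r=2$), running Pearson's procedure directly on the full $6$-value system. You instead run Pearson on the $5$-value prefix to identify $2c_4$ as its minimum counterexample, deduce tightness of both the $5$- and $6$-value systems, and then invoke Theorem~\ref{two-not-orderly} to reduce orderliness of the $6$-value system to a finite check of sums $c_i+c_j$. This is exactly the strategy the paper uses for the $(++++-+)$ case in Theorem~\ref{+-+}(b),(c), so it is entirely reasonable and in fact more self-contained for the six-coin statement; the paper's route has the advantage of simultaneously proving the $n$-coin generalization. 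Your enumeration of sums exceeding $c_6$ is correct up to one harmless redundancy: in the second family $2c_4=c_6$, so it does not strictly exceed $c_6$ and need not be listed. The Pearson computation on the $5$-value prefix that you promise (to show $2c_4$ is the minimum counterexample) is routine but does require working through the candidates from $c_4-1$ and $c_5-1$; you should display it explicitly rather than assert it.
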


\begin{proof}

First we prove that any coin system with the pattern $+++--+$ is in the form of $\mathcal{C}$ or $\mathcal{C}'$ as stated in the theorem. From the previous lemma, we know that the coin system can be in two different forms. First consider the form: $\mathcal{B}=(1,a,2a-1,b,a+b-1,2b-1)$ where $a \geq 3$ and $b>2a-1$. From the One Point Theorem, we know that the coin system $(1,a,2a-1,b)$ has a counterexample of the form $m(2a-1)$ where $m = \lceil \frac{b}{2a-1}\rceil$. 

We claim that this number $m(2a-1)$ is not a counterexample of the 5-value prefix coin system of $\mathcal{B}$. Suppose for contradiction that $m(2a-1)$ is such a counterexample. Because the 6-value coin system is orderly, we know that
$$c_6 = 2b-1 \leq m(2a-1) = \lceil \tfrac{b}{2a-1}\rceil(2a-1)\leq 
(\tfrac{b}{2a-1} + 1)(2a-1) = b + 2a-1.$$
This implies that $b\leq 2a$. Since we already know $b>2a-1$, we conclude $b=2a$. By Theorem~\ref{firstjump}, we know that the differences between adjacent coin values are at least $a-1$, and because the third and fourth coin values differ by 1, we have $a=2$, and the coin system is $(1,2,3,4,5,7)$. However, this coin system is totally orderly, which is a contradiction. Hence $m(2a-1)$ is not a counterexample for the 5-value coin system of $\mathcal{B}$.

Because $m(2a-1)$ is a counterexample for the 4-value prefix coin system, if we subtract the value of the 4th coin (which has value $b$) from $m(2a-1)$, the remaining value must use more than $m-1$ coins in its greedy solution. Whereas, because $m(2a-1)$ is {\em not} a counterexample for the 5-value prefix coin system, if we subtract the value of the 5th coin (which has value $a+b-1$) from $m(2a-1)$, the remaining value uses at most $m-1$ coins in its greedy solution. 

Letting $z = m(2a-1) - (a+b-1)$ and $y=m(2a-1) - b$, we can express the above statements as an inequality: $grd_\mathcal{B}(z) \leq m-1 < grd_\mathcal{B}(y)$.
Moreover, observe that the value of $z$ is small:
$$z=m(2a-1) - (a+b-1) \leq (\tfrac{b}{2a-1} + 1)(2a-1) - (a+b-1) = a$$
Also, notice that $y = z+ a-1.$

Collecting these statements together, we are looking for a value $z$ with two relationships: (1) $z\in [0,a]$ and (2) $grd_\mathcal{B}(z)\leq m-1< grd_\mathcal{B}(z+a-1)$. Let's take a look at the function $grd_\mathcal{B}(x)$ at values from $x=0$ up to $x=2a-1$.

\begin{equation*}
 grd_\mathcal{B}(x) = 
\begin{cases}
   
        x & \text{if } 0\leq x \leq a-1\\
        x-(a-1) & \text{if } a\leq x \leq 2a-2\\
        1 & \text{if } x=2a-1
    \end{cases}
    \end{equation*}
Observe that for all $1\leq x \leq a$, the function satisfies $grd_\mathcal{B}(x) = grd_\mathcal{B}(x+a-1)$. So $z$ cannot be any integer from 1 to $a$, because the inequality (2) given above is not satisfied. As $z\in [0,a]$, the only possible value for $z$ is $z = 0$. Replacing $z=0$ in the original expressions for $y$ and $z$, we find the values of $c_4, c_5,$ and $c_6$.
\begin{align*}
c_4 &= b = m(2a-1) -a+1\\
c_5 &= a+b-1 = m(2a-1)\\
c_6 &= 2b-1 = (2m-1)(2a-1)
\end{align*}
Keep in mind that by the definition of $m$, we know $m>1$, and because $m-1<grd_\mathcal{B}(z+a-1)= grd_\mathcal{B}(a-1) = a-1$, we also have that $m<a$.
This proves that the coin system $\mathcal{B}$ has the form of $\mathcal{C}$ in the statement of the theorem. 

Now consider the second coin system form in Lemma~\ref{start}: $$\mathcal{B'}=(1,a,2a,b,a+b,2b)$$ where $a \geq 2$ and $b>2a$. We will show that $\mathcal{B'}$ has the form of $\mathcal{C'}$ given in the statement of the theorem. Initially, the argument for $\mathcal{B'}$ mirrors what we just did for $\mathcal{B}$, but then it diverges in subtle ways, so we start from the beginning. 

From the One Point Theorem, we know that the coin system $(1,a,2a,b)$ has a counterexample of the form $2am$ where $m = \lceil \tfrac{b}{2a}\rceil$. Suppose that this value $2am$ is also a counterexample of the 5-value prefix coin system of $\mathcal{B}'$. Since the 6-value coin system $\mathcal{B}'$ is orderly, we know that
$$c_6 = 2b \leq 2am =(2a) \lceil \tfrac{b}{2a}\rceil\leq 
(2a)(\tfrac{b}{2a} + 1) = b + 2a.$$
This implies that $b\leq 2a$, a contradiction. We conclude that $2am$ is not a counterexample for the 5-value prefix coin system of $\mathcal{B}'$.

 Therefore, starting with the value $2am$, if we subtract the value of the 4th coin, the remaining value must require more than $m-1$ coins using the greedy algorithm. Whereas, starting with the value $2am$, if we subtract the value of the 5th coin, the remaining value must require at most $m-1$ coins using the greedy algorithm. Letting $z = 2am - (a+b)$ and $y=2am - b$, we can express the thought above as an inequality: $grd_\mathcal{B'}(z) \leq m-1< grd_\mathcal{B'}(y)$.
Also observe that $z \in [0,a]$ because:
$$0\leq z=2am - (a+b) \leq (\tfrac{b}{2a} + 1)(2a) - (a+b) = a.$$
Moreover, observe that $y = z+ a.$
Collecting these statements together, we are looking for an integer $z$ with two relationships: (1) $z\in [0,a]$ and (2) $grd_\mathcal{B'}(z) \leq m-1< grd_\mathcal{B'}(z+a)$. Let us take a look at the function $grd_\mathcal{B'}(x)$ at values from $x=0$ to $x=2a$, which is the same as the previous case except at the last two values ($x = 2a-1$ and $x = 2a$).

\begin{equation*}
 grd_\mathcal{B'}(x) = 
    \begin{cases}
        x & \text{if } 0\leq x \leq a-1\\
        x-(a-1) & \text{if } a\leq x \leq 2a-1\\
        1 & \text{if } x=2a
    \end{cases}
\end{equation*}
Observe that $grd_\mathcal{B'}(x) =  grd_\mathcal{B'}(x+a)-1$ for all $x\in [0,a-1]$. This implies that if $z$ is any integer in the interval $[0,a-1]$, the inequality (2) above will be satisfied. In addition, if $z$ is any value on this interval, the inequality from (2) above simplifies to $z \leq m-1< z+1$. Thus $z=m-1$. 

Replacing $z=m-1$ in the original expressions for $y$ and $z$, we find the expression for $c_4$, $c_5$, and $c_6$ as follows. 
\begin{align*}
c_4 &= b =2am - (z+a) =  m(2a-1) -a +1\\
c_5 &= a+b = m(2a-1)+1 \\
c_6 &= 2b = 2(m(2a -1) -a +1) = (2m-1)(2a-1)+1
\end{align*}
Notice that by the construction of $m$, we know that $m>1$, and since $m-1=grd_\mathcal{B'}(z) \leq a-1,$ it follows that $m\leq a$. So then $\mathcal{B'}$ has the same form as $\mathcal{C'}$, as desired.

Conversely, we must prove that the coin systems $\mathcal{C}$ and $\mathcal{C'}$ in the theorem's statement have the pattern $+++--+$. We prove a stronger result, in fact. In Theorem~\ref{E}, we prove that an infinite family of coin systems denoted $\mathcal{E}$ (for which $\mathcal{C}$ is a special case) has the pattern $+++-\cdots-+$. In Theorem~\ref{F}, we prove that an infinite family of coin system $\mathcal{F}$ (for which $\mathcal{C'}$ is a special case) has the pattern $+++-\cdots-+$. We thus delay this argument to the next section.
\end{proof}

\section{Orderly coin systems with $n$ coin values}\label{extensions}
We now study orderly coin systems with $n$ coin values.
The only $+/-$ pattern of length $n$ which cannot be characterized via first characterizing orderly coin systems with $i$ values for some $3<i<n$ is the pattern where each  prefix coin system of $i$ values where $3<i<n$ fails to be orderly. Such a coin system has the pattern $+++-\cdots -+$. We focus on these coin systems.

In Section 6 of~\cite{Adamaszek}, the authors found a single coin system with pattern $+++-\cdots -+$ and with $n$ coin values for each $n=3r$ where $r\geq 2$ and for each $n=3r+2$ where $r\geq 1$.  Interestingly, the authors wrote that they suspected that there are no coin systems with $3r+1$ coin values with this pattern where $r\geq 2$, but they were unable to prove their conjecture (see the discussion on page 60 of~\cite{Adamaszek}). 

Using a computer program, we continued and extended this search for coin systems with pattern $+++-\cdots -+$. A detailed discussion of this computer search is given in Section~\ref{future}. Many observed examples have a particular structure. We call these coins systems {\bf fixed gap coin systems}, which we define below. 
 The examples we observed fit into three distinct infinite families: $\mathcal{D}, \mathcal{E}$, and $\mathcal{F}$. The coin systems in family $\mathcal{D}$ each have $3r+2$ different coin values where $r\geq 1$, while the coin systems in $\mathcal{E}$ and $\mathcal{F}$ each have $3r$ different coin values where $r\geq 2$. In line with the conjecture of~\cite{Adamaszek}, we found no coin systems with $3r+1$ coin values where $r\geq 2$. We prove that these three infinite families of coin systems $\mathcal{D}, \mathcal{E}$, and $\mathcal{F}$ have the pattern $+++-\cdots-+$.

 A fixed-gap coin system with $n$ coin values starts off with two initial coin values $(c_1,c_2)=(1,x)$. Then all the remaining coin values are generated in two stages. First choose two positive integers $\Delta_1$ and $\Delta_2$ where $\Delta_1\neq \Delta_2$.
 \begin{itemize}
 \item {\bf Stage 1:} Alternately add $\Delta_1$ and $\Delta_2$ to the previous coin value to produce the next coin value. We continue this until we have $\ell$ coin values for some $3<\ell<n$. 
 \item {\bf Stage 2:} For coin values $c_{\ell+1}$ through $c_n$, add $\Delta_1+\Delta_2$ to the previous coin value to produce the next coin value.  
 \end{itemize}
We make this process precise as follows. 

\begin{definition}\label{fixedgap} Let $n$, $\ell$, $x$, $\Delta_1$, and $\Delta_2$ be positive integers with $3<\ell < n$, $x\geq 2$, and   $\Delta_1 \neq \Delta_2$. The coin system $\mathcal{C} = (1, c_2, c_3, \ldots, c_n)$ is a {\bf fixed gap coin system} if $c_i$ are defined as follows.
\begin{equation}\label{C}
 c_i = 
\begin{cases}
        x & \text{if } i=2\\
        c_{i-1}+\Delta_1  & \text{if $i$ is odd and } 3\leq  i \leq \ell\\
        c_{i-1}+\Delta_2  & \text{if $i$ is even and } 3< i \leq \ell\\
        c_{i-1}+\Delta_1+\Delta_2 & \text{if } \ell<i\leq n
\end{cases}
\end{equation}

 \end{definition} 

For most fixed-gap coin system $\mathcal{C}$ as above, the coin system has several consecutive prefix coin systems that fail to be orderly. We prove this in the following lemma.

 \begin{lemma}\label{Enotorderly}
Let $\mathcal{C}$ be a fixed-gap coin system. If $\ell$ is odd, 
then each prefix coin system of $\mathcal{C}$ of length $k$ with $5 \leq k\leq \frac{1}{2}(3\ell-5)$ is not orderly.
If $\ell$ is even
and $x+\Delta_1 > \Delta_2+1$, then each prefix coin system of $\mathcal{C}$ of length $k$ with $5 \leq k \leq \frac{1}{2}(3\ell -4)$ is not orderly.
 \end{lemma}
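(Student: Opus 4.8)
The plan is to exhibit, for each prefix length $k$ in the stated range, an explicit small counterexample to the greedy algorithm, i.e. a value $v$ with $grd(v) > opt(v)$ in the prefix system $(1,c_2,\ldots,c_k)$. The natural source of such a counterexample comes from the fixed-gap structure: because consecutive gaps alternate between $\Delta_1$ and $\Delta_2$ with $\Delta_1 \neq \Delta_2$, the coin values do not ``line up'' with each other the way a totally orderly system requires, and two mid-sized coins can typically be combined to undercut the greedy representation of some larger value. Concretely, I would look for a counterexample of the form $v = c_i + c_j$ (using Theorem~\ref{two-not-orderly}/Proposition~\ref{Chang} as a guide for what shape to expect) where $c_i, c_j$ are among the coins built in Stage 1, and the greedy algorithm on $v$ is forced to use $c_k$ (the top coin of the prefix) plus a suboptimal remainder. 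The arithmetic identities among fixed-gap coins — e.g.\ $c_{i} + c_{i} = c_{i+1} + c_{i-1}$ when the gaps around $c_i$ are $\Delta_1,\Delta_2$ in the right order, or telescoping sums like $c_{i+1} - c_{i-1} = \Delta_1 + \Delta_2$ — are what make these undercuts work, and the condition $x + \Delta_1 > \Delta_2 + 1$ in the even case is exactly the inequality needed to guarantee that the greedy remainder is genuinely larger than the optimal one.

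The main structural step is to pin down which value of $v$ serves as a counterexample as a function of $k$. I expect the counterexample to ``slide'' with $k$: for the smallest admissible $k$ (namely $k=5$) one gets a counterexample living roughly at $c_4 + c_5$ or thereabouts, and as $k$ grows one uses progressively later Stage-1 coins, with the largest useful $k$ being the one for which the relevant pair of Stage-1 coins still has its sum bounded appropriately relative to $c_k$. The upper bounds $\tfrac12(3\ell-5)$ (odd $\ell$) and $\tfrac12(3\ell-4)$ (even $\ell$) should emerge precisely from this bookkeeping: roughly, a counterexample built from coins $c_a$ and $c_b$ with $a,b \le \ell$ requires $c_a + c_b$ to be at least the top prefix coin $c_k$ but the remainder after greedily removing $c_k$ to still be expressible, and counting how many Stage-1 coins fit under $c_a+c_b$ relative to $c_k$ gives $k \lesssim \tfrac32 \ell$. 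I would first do the odd-$\ell$ case carefully, identify the explicit counterexample (I anticipate something like $v = c_{k - \ell + 3} + c_{\ell}$ or a similar index shift, chosen so greedy is forced through $c_k$), verify $grd(v) = opt(v) + 1$ using the gap identities, and then adapt to even $\ell$, where the parity of $\ell$ changes which of $\Delta_1,\Delta_2$ is the ``last'' Stage-1 gap and hence forces the extra hypothesis $x+\Delta_1 > \Delta_2+1$.

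The hard part will be getting the index arithmetic exactly right so that the claimed counterexample is valid across the \emph{entire} range $5 \le k \le \tfrac12(3\ell - 5)$ (resp.\ $\tfrac12(3\ell-4)$), rather than just at the endpoints — in particular verifying that for every such $k$ the greedy algorithm really is forced to take the top coin $c_k$ (this needs $v \ge c_k$, i.e.\ a lower bound on the relevant coin sum) and that the optimal representation using only coins below $c_k$ genuinely beats it (this needs the upper bound on $k$ and the hypothesis on $x,\Delta_1,\Delta_2$). A secondary subtlety is that for the prefix system to even have $c_k$ as a coin we need $k \le \ell$ when $k$ is in the ``Stage 1'' range, but the stated bounds allow $k$ up to roughly $\tfrac32\ell > \ell$, so for $k > \ell$ the top coin $c_k$ is a Stage-2 coin and the counterexample and its verification must be handled in that regime too; I would split the proof into the sub-range $k \le \ell$ and the sub-range $k > \ell$ accordingly, checking that the same family of counterexamples (or a mild variant) works in both. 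Once the counterexample is correctly specified, confirming $opt(v) < grd(v)$ is a routine computation with the piecewise formula for $grd_\mathcal{C}$ analogous to the one displayed in the proof of Theorem~\ref{prove--+}.
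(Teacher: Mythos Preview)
Your overall strategy---produce, for each $k$ in the range, a value $v$ representable by two coins that greedy cannot match in two---is exactly right, and your instinct to split into the sub-ranges $k\le\ell$ and $k>\ell$ is also correct. But the specific counterexample you reach for is off, and this is a genuine gap. Your tentative choice $v=c_{k-\ell+3}+c_\ell$ fails already in the smallest case: with $\ell=5$ and $k=5$ it gives $v=c_3+c_5$, and greedy on that value simply returns $c_5+c_3$, so it is not a counterexample at all. More generally, a sum $c_i+c_j$ with $i\ne j$ and $j$ equal to the top coin of the prefix will never work, since greedy recovers it immediately.

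The paper's choice is much simpler than a sliding pair: take $v=2c_{k-1}$ for $5\le k\le\ell$, and take the \emph{single fixed} value $v=2c_{\ell-1}$ for every $k$ with $\ell<k$ in the stated range. The verification is not a computation of $grd(v)$ but a one-line inequality: one shows that $v-c_k>0$ and that $v-c_k$ is not a coin value. In every case $v-c_k$ simplifies (via the telescoping identity $c_{i}-c_{i-2}=\Delta_1+\Delta_2$) to a quantity of the form $c_s+\Delta_1$ or $c_s+\Delta_2$ for an appropriate index $s$, which lands strictly between two consecutive coins precisely because $\Delta_1\ne\Delta_2$. That alone forces $grd(v)\ge 3>2=opt(v)$, so no exact evaluation of $grd$ is needed. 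The upper bounds $\tfrac12(3\ell-5)$ and $\tfrac12(3\ell-4)$ are exactly the values of $k$ at which the index $s$ reaches its floor, and the hypothesis $x+\Delta_1>\Delta_2+1$ in the even case is what keeps $v-c_k$ positive and unequal to $c_1,c_2$ at the extreme $k$. So the ``hard part'' you flag---tracking a moving pair $(i,j)$ and computing $grd$ piecewise---evaporates once you pick the right $v$.
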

 
\begin{proof}
Throughout this proof, we use the same strategy multiple times to show that a given coin system $(1, c_2, c_3, \ldots, c_k)$ is not orderly. The outline is as follows. We prove:
\begin{enumerate}
\item  $2c_{k-1} - c_k >0$, and  
\item $2c_{k-1}-c_k \neq c_i$ for any $i<k$. 
\end{enumerate}
This implies that $2c_{k-1}$ is a counterexample because on one hand the greedy solution has more than 2 coins, while the optimal solution has 2 coins.

To begin, we prove that the prefix coin system $(1, x, c_3, \ldots, c_k)$ fails to be orderly for all $k$ such that $5 \leq k \leq \ell$.

Suppose that $k$ is even.  
We first show $2c_{k-1}-c_k$ is positive as follows:
\begin{equation*}
    2c_{k-1} - c_k = 2c_{k-1} - (c_{k-1} +\Delta_2) = c_{k-1} - \Delta_2 = (c_{k-3} +\Delta_1+\Delta_2) - \Delta_2= c_{k-3}+\Delta_1 >0
\end{equation*}
Now we show that $2c_{k-1}-c_k$ is not a coin value $c_i$ for any $i$. Observe that if $\Delta_1<\Delta_2$, then the value $2c_{k-1} - c_{k}=c_{k-3}+\Delta_1$ falls between consecutive coin values:
\begin{equation}\label{+y}
c_{k-3} < c_{k-3}+\Delta_1 < \underbrace{c_{k-3} + \Delta_2}_{c_{k-2}} 
\end{equation}
And if $\Delta_2< \Delta_1$, we similarly have:
\begin{equation}\label{+y2}
\underbrace{c_{k-3} + \Delta_2}_{c_{k-2}} < c_{k-3}+\Delta_1< \underbrace{c_{k-3} + \Delta_1 + \Delta_2}_{c_{k-1}}. 
\end{equation}
So no matter the relative sizes of $\Delta_1$ and $\Delta_2$, the value $2c_{k-1} - c_{k}$ falls between consecutive coin values, and therefore the value is not itself a coin value. We conclude that $2c_{k-1}$ is a counterexample for the prefix coin system  $(1,c_2,c_3, \ldots, c_k)$ where $k$ is even and $5 <k\leq \ell$.

Now suppose that $k$ is odd and $5 \leq k\leq \ell$. 
Similar to before, we show that $2c_{k-1}$ is a counterexample for the prefix coin system $(1,c_2,c_3, \ldots, c_{k})$. 
First, we simplify the expression $2c_{k-1} - c_{k}$ and conclude that it must be positive:
$$
2c_{k-1} - c_{k} = 2c_{k-1} - (c_{k-1}+\Delta_1) = c_{k-1} -\Delta_1 =(c_{k-3} +\Delta_1+\Delta_2)-\Delta_1 = c_{k-3} +\Delta_2 >0
$$
Similar to the previous case, observe that if $\Delta_1<\Delta_2$, then the value $2c_{k-1} - c_{k}=c_{k-3} +\Delta_2$ falls between consecutive coin values:
\begin{equation}\label{+z}
\underbrace{c_{k-3} + \Delta_1}_{c_{k-2}} < c_{k-3} +\Delta_2 < \underbrace{c_{k-3} + \Delta_1 + \Delta_2}_{c_{k-1}} 
\end{equation}
And if $\Delta_2<\Delta_1$, we have:
\begin{equation}\label{+z2}
c_{k-3} < c_{k-3} +\Delta_2 < \underbrace{c_{k-3} + \Delta_1}_{c_{k-2}}. 
\end{equation}
So in either case, the value $2c_{k-1} - c_{k}=c_{k-3} +\Delta_2$ falls between consecutive coin values, and therefore the value is not itself a coin value. We conclude that $2c_{k-1}$ is a counterexample for the prefix coin system  $(1,c_2,c_3, \ldots, c_k)$ where $k$ is odd and $5 \leq k\leq \ell$.

It remains to show that all prefix subcurrencies $(1,c_2,c_3, \ldots, c_k)$ where $\ell <k\leq t$ also fail to be orderly, where $t=\frac{1}{2}(3\ell-5)$ if $\ell$ is odd and $t = \frac{1}{2}(3\ell-4)$ if $\ell$ is even. 

Suppose $\ell = 2r+1$ for some $r \geq 2$. We show that $(1,c_2,c_3, \ldots, c_k)$ where $\ell <k \leq 3r-1$ fail to be orderly by showing $2c_{2r}$ is a counterexample. 
\begin{align*}
    2c_{2r}-c_k &= 2c_{2r} - (c_{2r+1} + (k-2r-1)(\Delta_1+\Delta_2)) \\
    &= 2c_{2r} - (c_{2r} +\Delta_1+(k-2r-1)(\Delta_1+\Delta_2))\\
    &=c_{2r} - \Delta_1 - (k-2r-1)(\Delta_1+\Delta_2)\\
    &= c_{2r - 2(k-2r)} +(k-2r)(\Delta_1+\Delta_2) - \Delta_1 - (k-2r-1)(\Delta_1+\Delta_2)\\
    &= c_{6r - 2k} +\Delta_2
\end{align*}
We previously proved that a value of form $c_{2i} +\Delta_2$ where $1\leq i \leq r-1$ falls between adjacent coin values (Inequalities~\ref{+z} and~\ref{+z2}). Therefore, $2c_{2r}$ is a counterexample for the prefix coin system $(1,c_2,c_3, \ldots, c_k)$ where $\ell <k \leq  3r-1$ and $\ell$ is odd.

Finally, suppose $\ell$ is even. That is, $\ell = 2r+2$ for some $r \geq 1$. And suppose $x+\Delta_1 > \Delta_2+1$. We show that $(1,c_2,c_3, \ldots, c_k)$ where $\ell <k \leq  3r+1$ fails to be orderly by showing $2c_{2r+1}$ is a counterexample. First we do the special case that $k=3r+1$.
\begin{align*}
    2c_{2r+1}-c_{3r+1} &= 2c_{2r+1} - (c_{2r+2} + (r-1)(\Delta_1+\Delta_2)) \\
    &= 2c_{2r+1} - (c_{2r+1} +\Delta_2+(r-1)(\Delta_1+\Delta_2))\\
    &=c_{2r+1} - \Delta_2 - (r-1)(\Delta_1+\Delta_2)\\
    &= c_{2r+1 - 2(r-1)} +(r-1)(\Delta_1+\Delta_2) - \Delta_2 - (r-1)(\Delta_1+\Delta_2)\\
    &= c_{3} -\Delta_2\\
    &= x+\Delta_1-\Delta_2
\end{align*}
By our assumption, $x+\Delta_1 > \Delta_2+1$. Thus, $1<x+\Delta_1 - \Delta_2<c_3$ and moreover $x+\Delta_1 - \Delta_2$ cannot equal $c_2=x$. Therefore, this value $x+\Delta_1 - \Delta_2$ is not a coin value and $2c_{2r+1}$ is a counterexample for the prefix coin system $(1, c_2, c_3, \ldots, c_{3r+1})$.

Now suppose $\ell <k <  3r+1$. We prove that this prefix coin system is not orderly by showing $2c_{2r+1}$ is a counterexample.
\begin{align*}
    2c_{2r+1}-c_k &= 2c_{2r+1} - (c_{2r+2} + (k-2r-2)(\Delta_1+\Delta_2)) \\
    &= 2c_{2r+1} - (c_{2r+1} +\Delta_2+(k-2r-2)(\Delta_1+\Delta_2))\\
    &=c_{2r+1} - \Delta_2 - (k-2r-2)(\Delta_1+\Delta_2)\\
    &= c_{2r+1 - 2(k-2r-2)} +(k-2r-2)(\Delta_1+\Delta_2) - \Delta_2 - (k-2r-2)(\Delta_1+\Delta_2)\\
    &= c_{6r - 2k+5} -\Delta_2\\
    &= c_{6r - 2k+3} +\Delta_1
\end{align*}
We previously proved that a value of the form $c_{2i+1} +\Delta_1$ where $1\leq i< r-1$ falls between adjacent coin values (Inequalities~\ref{+y} and~\ref{+y2}.). Therefore, $2c_{2r+1}$ is a counterexample for the coin system $(1,c_2,c_3, \ldots, c_k)$ where $\ell <k <  3r+2$.
\end{proof}

Now we define the three infinite families of fixed-gap coin systems (denoted by $\mathcal{D}, \mathcal{E}$, and $\mathcal{F}$, respectively) and prove that they have pattern $+++-\cdots-+$. 
\begin{theorem}\label{D}
Let $r\geq1$ and $a \geq 2$. Define $\mathcal{D} = (1, d_2, \ldots, d_{3r+2})$ as follows:
$$ d_k = 
\begin{cases}
        2 & \text{if } k=2\\
        d_{k-1}+a  & \text{if $k$ is odd and } 3\leq  k  < 2r+2\\
        d_{k-1}+1  & \text{if $k$ is even and } 3< k \leq 2r+2\\
        d_{k-1}+a+1 & \text{if } 2r+2 <k\leq 3r+2
 \end{cases}
    $$
The coin system $\mathcal{D}$ has pattern $+++-\cdots -+$. 
\end{theorem}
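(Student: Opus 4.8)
The plan is to treat $\mathcal{D}$ as a fixed gap coin system in the sense of Definition~\ref{fixedgap}, with $x=2$, $\Delta_1=a$, $\Delta_2=1$, $\ell=2r+2$ (so $\ell$ is even), and $n=3r+2$. Unrolling the recursion, the Stage~1 coins are $d_{2j+1}=(j+1)+ja$ and $d_{2j}=(j+1)+(j-1)a$, and the Stage~2 coins are $d_{2r+2+t}=(r+2+t)+(r+t)a$; in particular $d_{3r+2}=(2r+2)+2ra=2d_{2r+1}$, and one also records $d_{3r+2}+1=d_{2r+1}+d_{2r+2}$ and $d_{3r+2}+2=2d_{2r+2}$. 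To show $\mathcal{D}$ has pattern $(+++-\cdots-+)$ I would prove three things: (i) the prefixes of lengths $1,2,3$ are orderly; (ii) the prefixes of lengths $4,5,\ldots,3r+1$ are \emph{not} orderly; and (iii) $\mathcal{D}$ itself is orderly.

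Part (i) is immediate: $(1)$ and $(1,2)$ are orderly, and $(1,2,2+a)$ is orderly by Theorem~\ref{three} since when $c_2=2$ the set $\mathcal{A}$ consists of all positive integers. For part (ii), the length-$4$ prefix $(1,2,2+a,3+a)$ has an orderly length-$3$ prefix and $\lceil(3+a)/(2+a)\rceil=2$, so by the One Point Theorem it is orderly iff $grd(2(2+a))\le 2$; but $4+2a=d_3+d_3$ has a $2$-coin representation while greedy takes $d_4=3+a$ first and then needs at least two further coins to make up $1+a$ (this is where $a\ge 2$ enters), so $4+2a$ is a counterexample. For the prefixes of lengths $5\le k\le 3r+1$ I would invoke Lemma~\ref{Enotorderly}: $\ell=2r+2$ is even and $x+\Delta_1=2+a>2=\Delta_2+1$, so that lemma excludes orderliness for all $k$ with $5\le k\le\tfrac{1}{2}(3\ell-4)=3r+1$. (For $r=1$ this range is empty, only the $k=4$ case is needed, and $\mathcal{D}=(1,2,2+a,3+a,4+2a)$ is then also covered by Theorem~\ref{five}.)

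Part (iii) is the crux, and I would handle it through Theorems~\ref{Pearson} and~\ref{two-not-orderly}. The key preliminary step is to show that the length-$(3r+1)$ prefix $\mathcal{C}'$ has \emph{minimum} counterexample exactly $2d_{2r+1}=d_{3r+2}$. That $2d_{2r+1}$ is a counterexample of $\mathcal{C}'$ is clear (it is a multiple of $d_{2r+1}$, so $opt\le 2$, while the computation inside the proof of Lemma~\ref{Enotorderly} shows the greedy value exceeds $2$). For the lower bound I would run Theorem~\ref{Pearson} on $\mathcal{C}'$: the fixed gap structure forces the greedy representation of each $d_{k+1}-1$ to be supported only on coordinates $1$, $2$, and $k$ with explicitly computable entries, and one then checks that none of the finitely many candidate counterexamples produced by the theorem is either less than $d_{3r+2}$ or actually a counterexample. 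This makes $\mathcal{C}'$ tight; and since no value $v<d_{3r+2}$ can use the coin $d_{3r+2}$ in any representation, $\mathcal{C}'$ and $\mathcal{D}$ share the same counterexamples below $d_{3r+2}$, so $\mathcal{D}$ is tight as well. Now apply Theorem~\ref{two-not-orderly} with $\mathcal{C}_1=(1,2,2+a)$, $\mathcal{C}_2=\mathcal{C}'$, $\mathcal{C}_3=\mathcal{D}$ (all tight, $\mathcal{C}_1$ orderly, $\mathcal{C}_2$ not orderly): if $\mathcal{D}$ were not orderly it would have a counterexample $d_i+d_j$ with $1<d_i\le d_j\le d_{3r+1}$ and $d_i+d_j>d_{3r+2}$. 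For any such value, $d_{3r+2}\le d_i+d_j<2d_{3r+2}$, so greedy on $\mathcal{D}$ removes one $d_{3r+2}$, and I would finish by checking that $d_i+d_j-d_{3r+2}$ is again a coin of $\mathcal{D}$ — a direct computation with the closed forms (for instance $d_i+d_j-d_{3r+2}=d_{i+j-3r-2}$ when $i,j$ are both large Stage~2 indices, with analogous index shifts such as $d_i-(3r+2-j)(a+1)=d_{i-2(3r+2-j)}$ in the other cases, and using $d_{3r+2}+1=d_{2r+1}+d_{2r+2}$ and $d_{3r+2}+2=2d_{2r+2}$ at the boundary). Then $grd_\mathcal{D}(d_i+d_j)=2=opt_\mathcal{D}(d_i+d_j)$, a contradiction, so $\mathcal{D}$ is orderly.

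The main obstacle is the Theorem~\ref{Pearson} computation locating the minimum counterexample of $\mathcal{C}'$ at $d_{3r+2}$: one must organize the greedy representations of the values $d_{k+1}-1$ across Stage~1, the stage boundary, and Stage~2, and then rule out every candidate counterexample the theorem produces — the bookkeeping is somewhat delicate because the two gaps $\Delta_1=a$ and $\Delta_2=1$ interact. By contrast, the $k=4$ case, the verification of the hypotheses of Lemma~\ref{Enotorderly}, and the ``$d_i+d_j-d_{3r+2}$ is a coin'' step are all routine once the closed forms for the $d_k$ are in hand.
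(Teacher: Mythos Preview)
Your outline for parts (i) and (ii) matches the paper's proof exactly: Theorem~\ref{three} handles the first three prefixes, the value $2(a+2)$ witnesses non-orderliness at length $4$, and Lemma~\ref{Enotorderly} covers lengths $5$ through $3r+1$.

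For part (iii) you take a genuinely different route. The paper applies Theorem~\ref{Pearson} \emph{directly to $\mathcal{D}$}: it writes down the greedy representations of each $d_k-1$, generates the full list of Pearson candidates, observes that (apart from a few small values handled at $p=1$) every candidate has the form $d_i+d_j$, and then exhibits an explicit $1$- or $2$-coin greedy solution for every such sum via a large case table. You instead apply Theorem~\ref{Pearson} to the length-$(3r+1)$ prefix $\mathcal{C}'$ in order to pin its minimum counterexample at $2d_{2r+1}=d_{3r+2}$, deduce that both $\mathcal{C}'$ and $\mathcal{D}$ are tight, and then invoke Theorem~\ref{two-not-orderly} so that only sums $d_i+d_j>d_{3r+2}$ with $j\le 3r+1$ need checking. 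This mirrors what the paper itself does for the families $\mathcal{B}$ and $\mathcal{B}'$ in the proof of Theorem~\ref{+-+}, so it is a perfectly legitimate strategy. The trade-off is modest: your Pearson computation on $\mathcal{C}'$ and the paper's on $\mathcal{D}$ are essentially the same (the greedy vectors for $d_k-1$ coincide for $k\le 3r+1$), and both approaches ultimately reduce to verifying that each relevant $d_i+d_j$ has a $2$-coin greedy solution. Your version has the conceptual advantage of separating ``tightness'' from ``orderliness of the top system,'' while the paper's version is a single unified computation; neither saves substantial work over the other.
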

\begin{example}
 For $r=3$ and $a=3$, the coin system is 
 $$\mathcal{D} = (1,2,5,6,9,10,13,14,18,22,26).$$
 \end{example}
\begin{proof}
The 3-coin prefix coin system $(d_1,d_2,d_3)=(1,2,a+2)$ is totally orderly for all values of $a\geq 2$ by Theorem~\ref{three} where $d_3-d_2 = a$. The 4-coin prefix coin system $(1,2,a+2, a+3)$ is not orderly because $2a+4$ is a counterexample. From Lemma~\ref{Enotorderly}, we know all prefix coin systems of length $i$ where $5 \leq i \leq 3r+1$ are not orderly. 

It remains to show the full coin system $\mathcal{D}$ is orderly. 
Suppose that the coin system is not orderly. We use Theorem~\ref{Pearson} to find the smallest counterexample. First, for all $k\geq 2$, we express the greedy solution for $d_k - 1$ as a vector with respect to the coin system $\mathcal{D}$. In the vectors below, an asterisk (*) denotes that the value is in the $k-1$ position.
For $k$ even and $2\leq k\leq 2r+2$, we have
$$d_k - 1 = d_{k-1}=(0, 0, \ldots, 1^*, \ldots, 0)_\mathcal{D}. $$
For $k$ odd and $2 < k < 2r+2$, we have
$$d_k - 1 = d_{k-1}+a-1= (x_1, x_2, 0, \ldots, 1^*, \ldots, 0)_\mathcal{D} $$
where $x_1$ and $x_2$ are integers such that  $a-1 = 2x_2+x_1$, $x_2\geq 0$ and $x_1 \in\{0,1\}$.
For $2r+1< k\leq 3r+2 $, 
$$d_k - 1 = d_{k-1}+a= (y_1, y_2, 0, \ldots, 1^*, 0, \ldots, 0)_\mathcal{D}$$
where $y_1$ and $y_2$ are integers such that $a = 2y_2+y_1$, $y_2\geq 1$, and $y_1 \in \{0,1\}$. 

Now for each $1\leq p< k-1$, we set the first $p$ entries of the vectors above to 0 and add 1 to the $p+1$ entry. The set of the resulting vectors gives all the candidates for the smallest counterexample. When $p=1$, the resulting vector in each case represents one of the following values: $d_i$, $d_i +1$, or $d_i+2$. None of these values are counterexamples, since their greedy solutions have either 1 or 2 coins. When $p\geq 2$, the resulting vector in each case represents a value of the form $d_i + d_j$ where $3\leq i,j < 3r+2$.   We compute the greedy solution for  $d_i+d_j$ for all $i,j$. 

First we consider the sum $d_i + d_j$ where both of the coin values belong to Stage 1 of the construction. That is, $3\leq i,j\leq 2r+2$.
\begin{equation*}
 d_i + d_j = 
\begin{cases}
        d_2 + d_{i+j-2} & \text{if $i$ and $j$ are even and } i+j \leq 2r+4\\
        d_2 + d_{(i+j+2r)/2} & \text{if $i$ and $j$ are even and } i+j > 2r+4  \\
        d_{i+j} & \text{if $i$ and $j$ are odd and } i+j \leq 2r+2\\
        d_{r+1+(i+j)/2} & \text{if $i$ and $j$ are odd and } i+j > 2r+2   \\
        d_1+d_{i+j-1} & \text{if $i$ is odd and $j$ is even and  } i+j \leq 2r+2\\
        d_1 + d_{(i+j+2r+1)/2} & \text{if $i$ is odd and $j$ is even and  } i+j > 2r+2 
\end{cases}
\end{equation*}

Next we compute the greedy solution for $d_i+d_j$ where at least one coin value (say, $d_j$) belongs to Stage 2 of the construction. That is, $2r+2 <j < 3r+2$. Notice that we need not consider the case $j = 3r+2$ because in that case, the greedy solution of $d_i + d_{3r+2}$ is simply itself.
\begin{equation*}
 d_i + d_j = 
\begin{cases}       
        d_{3r+2} + d_{i+2j-6r-4} & \text{if $2<i<2r+2$ is odd and } \tfrac{1}{2}(i-1)+j \geq 3r+2\\
        d_1 + d_{j+(i-1)/2} & \text{if $2<i<2r+2$ is odd and } \tfrac{1}{2}(i-1)+j < 3r+2 \\
         d_{3r+2} + d_{i+2j-8r-8} & \text{if $2\leq i\leq 2r+2$ is even and } \tfrac{1}{2}(i-2)+j \geq 3r+2\\
        d_2 + d_{j+(i-2)/2} & \text{if $2\leq i\leq 2r+2$ is even and } \tfrac{1}{2}(i-2)+j < 3r+2\\
         d_{3r+2} + d_{2(i+j-4r-3)} & \text{if $2r+2 < i < 3r+2$ and } i+j \leq 5r+4\\
        d_{3r+2}+ d_{i+j-3r-2} & \text{if $2r+2 < i < 3r+2$ and } i+j > 5r+4  
\end{cases}
\end{equation*}
None of these values $d_i + d_j$ are counterexamples because each greedy solution has only 1 or 2 coins. Therefore, the coin system $\mathcal{D}$ does not have a minimum counterexample, and the coin system  is orderly.
\end{proof}

\begin{theorem}\label{E}
Let $r\geq 2$ and $1<m < a$. Define $\mathcal{E} = (1, e_2, \ldots, e_{3r})$ as follows:
$$
 e_k = 
\begin{cases}
        a & \text{if } k=2\\
        e_{k-1}+a-1  & \text{if $k$ is odd and } 3\leq  k \leq 2r+1\\
        e_{k-1}+(m-1)(2a-1)-(a-1) & \text{if $k$ is even and } 3< k < 2r+1\\
        e_{k-1}+(m-1)(2a-1) & \text{if } 2r+1 <k\leq 3r
\end{cases}
$$
The coin system $\mathcal{E}$ has pattern $+++-\cdots -+$. 
\end{theorem}
\begin{example}
In the special case where $r=2$, we recover the coin system with six coins studied in Theorem~\ref{prove--+}(1): $$(1,a,2a-1,m(2a-1)-a+1,m(2a-1),(2m-1)(2a-1))$$ where $1<m<a$.
\end{example}
\begin{example}
 For $r=4$, $m=3$, and $a=4$, the coin system is 
 $$\mathcal{E} = (1,4,7,18,21,32,35,46,49,63,77,91).$$
 \end{example}
\begin{proof}

The 3-value prefix coin system $(e_1,e_2,e_3)=(1,a,2a-1)$ is totally orderly by Theorem~\ref{three} where $e_3-e_2 = a-1$. 

The 4-coin prefix coin system $(1,a,2a-1,m(2a-1)-(a-1))$ where $1<m<a$ is not orderly because $m(2a-1)$ is a counterexample. By Lemma~\ref{Enotorderly}, we know all prefix coin systems of length $i$ where $5 \leq i \leq 3r-1$ are not orderly. 

Now we show that the full coin system $\mathcal{E}$ is orderly using Theorem~\ref{Pearson}. First, for all $k\geq 2$, we express the greedy solution for $e_k - 1$ as a vector with respect to the coin system $\mathcal{E}$. In the vectors below, the asterisk (*) denotes that the value is in the $k-1^{st}$ entry.

\begin{equation}\label{evectors}
e_k - 1 =
\begin{cases}
(a-1,0,\ldots,0)_\mathcal{E} & \text{if $k=2$}\\
(a-2, 0, \ldots, 1^*, \ldots, 0)_\mathcal{E} &
\text{if $k$ is odd, $3 \leq k\leq 2r+1$}\\
(a-1, 0,m-1,0,\ldots, 0)_\mathcal{E} & \text{if $k=4$}\\
(a-1, 0,m-2,0 \ldots, 1^*, \ldots, 0)_\mathcal{E} & \text{if $k$ is even, $6 \leq k < 2r+1$}\\
(a-2, 1, m-2,0, \ldots, 1^*, \ldots, 0)_\mathcal{E} &
\text{if $2r+1< k\leq 3r $}
\end{cases}
\end{equation}

Now for each $1\leq p < k-1$, we set the first $p$ entries of the vectors above to 0 and add 1 to the $p+1$ entry. In the first vector (when $k=2$), there is nothing to check since in this case $1\leq p$ but also $p< k-1=1$. In most cases, the resulting vector corresponds to the value  $e_i + e_j$ where $2\leq i,j< 3r$ (specifically, this is the case for the second vector and all values of $p$, and for the final two vectors in the case that $3\leq p<k-1$). So we must check whether a value of this form is a counterexample. 

Accordingly, we find the greedy solution for $e_i + e_j$ for all $2\leq i,j< 3r$. First, suppose $2\leq i,j\leq 2r+1$ and $m>2$.
\begin{equation*}
 e_i + e_j = 
\begin{cases}
        e_{i+j-1} +e_1 & \text{if $i$ and $j$ even and } i+j \leq 2r+2 \\
        e_{(i+j)/2+r}+e_1 & \text{if $i$ and $j$ even and } i+j > 2r+2 \\
        e_{i+j-3} +e_3 & \text{if $i$ and $j$ odd and } i+j \leq 2r+4 \\
        e_{(i+j)/2+r-1}+e_3 & \text{if $i$ and $j$ odd and } i+j > 2r+4    \\
        e_{i+j-2} +e_2 & \text{if $i$ is odd and $j$ is even and  } i+j \leq 2r+4 \\
        e_{r+(i+j-1)/2} + e_2 & \text{if $i$ is odd and $j$ is even and  } i+j > 2r+4
\end{cases}
\end{equation*}
Next suppose $2r+1< j<3r$ and $m>2$. Then
\begin{equation*}
 e_i + e_j = 
\begin{cases}         
       e_{j+(i-3)/2} +e_3 & \text{if $2< i\leq 2r+1$ is odd and } i+2j \leq 6r+3 \\
        e_{3r} + e_{i+2j-6r} & \text{if $2< i\leq 2r+1$ is odd and } i+2j > 6r+3\\
       e_{j-1+i/2} +e_2  & \text{if $2\leq i\leq 2r+1 $ is even and } i + 2j \leq 6r+2\\
       e_{3r} + e_{i+2j-6r} & \text{if $2\leq i\leq 2r+1$ is even and } i+2j > 6r+2\\
       e_{3r}+e_{2i+2j-8r-1} & \text{if $2r+1< i<3r$, and $i+j\leq 5r+1$}\\
      
      e_{3r}+e_{i+j-3r} & \text{if $2r+1< i<3r$, and $i+j> 5r+1$ }
\end{cases}
\end{equation*}
In the special case $m=2$, the above equations require three adjustments. If $m=2$, then $e_3 +e_k =e_{k+2}$ for $2\leq  k< 2r$  and $e_3+e_k = e_{k+1}$ for $2r+1 \leq k < 3r+1$. Thus if $m=2$, anytime we see $e_k+e_3$ above where $k\geq 2$ and $k\neq 2r$, we eliminate the sum and replace it with a single coin value as the greedy solution. 

Observe that in all the above expressions, the greedy solution for $e_i + e_j$ contains either 1 or 2 coins. Therefore, $e_i+e_j$ is not a counterexample. 

Now, we consider one by one the last three vectors from Equation~\ref{evectors} where $p=1$ and $p=2$. Begin with $e_4 - 1$. For $p = 1$ and $p=2$, respectively, we have the following two vectors, associated candidates for counterexample, and greedy solutions. 
\begin{center}
\begin{tabular}{|c | c | c |}
  \hline
   Vector representation & Candidate for counterexample & Greedy solution \\ 
  \hline
 $(0,1,m-1,0,\ldots, 0)_\mathcal{E} $ & $2am - m - a+1$ & $e_{4}$\\ 
  \hline
$(0,0,m,0,\ldots, 0)_\mathcal{E}$  &  $m(2a-1)$ & $e_{5}$ \\ 
  \hline
\end{tabular}
\end{center}
The greedy solutions above both use 1 coin, so neither of these values are counterexamples. 

Now consider $e_k - 1$ where $k$ is even, $6\leq k < 2r+1$ (see Equation~\ref{evectors} for the vector representation). For $p = 1$ and $p=2$, respectively, we have the following results. (Recall that an asterisk (*) denotes that the value is in the $k-1^{st}$ entry.)

\begin{center}
\begin{tabular}{|c | c | c |}
  \hline
   Vector representation & Candidate for counterexample & Greedy solution \\ 
  \hline
 $(0, 1,m-2,0 \ldots, 1^*, \ldots, 0)_\mathcal{E}$& $e_{k-1} + 2am-3a - m +2$ & $e_{k}$\\ 
  \hline
$ (0, 0,m-1,0 \ldots, 1^*, \ldots, 0)_\mathcal{E}$  &  $e_{k-1} + 2am - 2a-m+1$ & $e_{k+1}$ \\ 
  \hline
\end{tabular}
\end{center}
Again, the greedy solutions above both use 1 coin, so neither of these values are counterexamples. 

Finally, consider $e_k - 1$ where $2r+1< k\leq 3r $ (see Equation~\ref{evectors}). For $p=1$ and $p=2$, respectively, we have the following two vectors, candidates for counterexample, and associated greedy solutions: 
\begin{center}
\begin{tabular}{|c | c | c |}
  \hline
   Vector representation & Candidate for counterexample & Greedy solution \\ 
  \hline
$ (0, 2, m-2,0, \ldots, 1^*, \ldots, 0)_\mathcal{E}$ &   $ e_{k-1}+2a(m-1)-(m-2)$  & $e_{k}+e_1$ \\ 
 \hline
$(0, 0, m-1,0, \ldots, 1^*, \ldots, 0)_\mathcal{E}$& $e_{k-1} + (2a-1)(m-1)$  & $e_{k}$\\ 
  \hline
\end{tabular}
\end{center}

Once more, the greedy solutions above use either 1 or 2 coins, so neither of these values are counterexamples. Thus the coin system $\mathcal{E}$ is orderly.
\end{proof}

\begin{theorem}\label{F}
Let $r\geq 2$ and $1<m\leq a$. Define $\mathcal{F} = (1, f_2, \ldots, f_{3r})$ as follows:
$$
 f_k = 
\begin{cases}
        1 & \text{if } k=1\\
        a & \text{if } k=2\\
        f_{k-1}+a  & \text{if $k$ is odd and } 3\leq  k \leq 2r+1\\
        f_{k-1}+(m-1)(2a-1)-a  & \text{if $k$ is even and } 3< k < 2r+1\\
        f_{k-1}+(m-1)(2a-1) & \text{if } 2r+1 <k\leq 3r
\end{cases}
$$
The coin system $\mathcal{F}$ has pattern $+++-\cdots -+$. 
\end{theorem}
\begin{example}
In the special case where $r=2$, we recover the coin system with six coins studied in Theorem~\ref{prove--+}(2): $$(1,a,2a,m(2a-1)-a+1,m(2a-1)+1,(2m-1)(2a-1)+1)$$ where $1<m \leq a$.
\end{example}
\begin{example}
 For $r=4$, $m=3$, and $a=4$, the coin system is 
 $$\mathcal{F} = (1,4,8,18,22,32,36,46,50,64,78,92).$$
 \end{example}
\begin{proof}
The 3-coin prefix coin system $(f_1,f_2,f_3)=(1,a,2a)$ is totally orderly by Theorem~\ref{three} where $f_3-f_2 = a$. 

The 4-coin prefix coin system $(1,a,2a,(m-1)(2a-1)+a )$ where $1<m\leq a$ is not orderly because $2am$ is a counterexample.
By Lemma~\ref{Enotorderly}, we know all prefix coin systems of length $i$ where $5 \leq i \leq 3r-1$ are not orderly.  

Finally, we show that the full coin system $\mathcal{F}$ is orderly. 
Suppose that the coin system is not orderly. We use Theorem~\ref{Pearson} to find the smallest counterexample. We find the greedy solution for $f_k - 1$ for all $k\geq 2$ and express it as a vector with respect to the coin system $\mathcal{F}$. In the vectors below, the asterisk (*) denotes that the value is in the $k-1^{st}$ entry.
\begin{equation}\label{fvectors}
f_k - 1 =\
\begin{cases}
(a-1,0,\ldots,0)_\mathcal{F} & \text{if $k=2$}\\
(a-1, 0, 0, \ldots, 1^*, \ldots, 0)_\mathcal{F} &
\text{if $k$ is odd, $3 \leq k\leq 2r+1$}\\
(a-m, 0,m-1,0 \ldots, 0)_\mathcal{F} & \text{if $k=4$}\\
(a-m, 0,m-2,0 \ldots, 1^*, \ldots, 0)_\mathcal{F} & \text{if $k$ is even, $6 \leq k < 2r+1$}\\
(a-m, 1, m-2,0, \ldots, 1^*, \ldots, 0)_\mathcal{F} &
\text{if $2r+1< k\leq 3r $}
\end{cases}
\end{equation}
As before, for each $1\leq p\leq k-1$, we set the first $p$ entries of the above vectors to 0 and add 1 to the $p+1$ entry. Similar to the previous theorem, the case $k=2$ yields nothing. For the second case in Equation~\ref{fvectors}, the resulting vector has only two nonzero values (both equal to 1) for all such $p$, which corresponds to the value $f_i+f_j$. The same is true for the last two cases in Equation~\ref{fvectors} when $p \geq 3$. To settle these cases, we find the greedy solution for $f_i + f_j$ for all $2\leq i,j< 3r$. 

Suppose $2\leq i,j\leq 2r+1$ and $m>2$. The greedy solution for $f_i+f_j$ is:
\begin{equation}\label{small}
 f_i + f_j = 
\begin{cases}
        f_{i+j -1} & \text{if $i$ and $j$ even and } i+j \leq 2r+2 \\

         f_{r+ (i+j)/2} & \text{if $i$ and $j$ even and } i+j > 2r+2 \\
        
        f_{i+j-3}+f_3 & \text{if $i$ and $j$ odd and } i+j \leq 2r+4 \\
     f_{r-1+(i+j)/2}+f_3 & \text{if $i$ and $j$ odd and } i+j > 2r+4    \\
 
        f_{i+j-2}+f_2 & \text{if $i$ is even and $j$ is odd and } i+j \leq 2r+3 \\
         
        f_{r+(i+j-1)/2}+f_2 & \text{if $i$ is odd and $j$ is even and  } i+j > 2r+3
\end{cases}
\end{equation}
Suppose $2r+1< j < 3r$ and $m>2$. Then we have:
\begin{equation}\label{big}
 f_i + f_j = 
\begin{cases}        
        f_{j +(i-3)/2} + f_3 
       & \text{if $2< i\leq 2r+1$ is odd and } i + 2j \leq 6r+3 \\
        
        f_{3r} + f_{i +2j-6r} 
        & \text{if $2< i\leq 2r+1$ is odd and } i + 2j > 6r+3\\
         
        f_{j-1+i/2} + f_2 
        & \text{if $2\leq i< 2r+1 $ is even and } i + 2j \leq 6r+2\\
         
       f_{3r} + f_{i+2j-6r} 
        & \text{if $2\leq i< 2r+1$ is  even and } i + 2j> 6r+2\\

       f_{3r} + f_{2i+2j-8r-1} 
     & \text{if $2r+1< i < 3r  $ and $i+j \leq 5r+1$}\\
      f_{3r} + f_{i+j-3r} 
     & \text{if $2r+1< i < 3r $ and $i+j > 5r+1$}
\end{cases}
\end{equation}

From this, we see that if $m>2$, then $f_i + f_j$ is not a counterexample no matter the values of $2\leq i,j < 3r$, because the greedy solutions given above always have 1 or 2 coins. 

In the special case $m=2$, the above equations require three adjustments. The greedy solution of $f_k + f_3$ is $f_{k+2}+f_1$ if $3\leq k < 2r+1$. And, if $2r+1\leq k < 3r$, the greedy solution is $f_{k+1}+f_1$. So the expressions in cases 3 and 4 of Equation~\ref{small} and case 1 of Equation~\ref{big} above need appropriate adjustments.  Everything else remains as is, and again we conclude that $f_i+f_j$ is not a counterexample.

Now we consider the remaining special cases. Namely, we consider the last three cases of Equation~\ref{fvectors} where $p=1$ and $p=2$.  First we simply work out the candidates for minimum counterexample. 

We begin with $f_4-1$. (See Equation~\ref{fvectors} for the vector representation of $f_4-1$.) The vectors associated to $p=1$ and $p=2$ and the resulting candidates for counterexamples are below.

\begin{center}
\begin{tabular}{|l | c | c |}
  \hline
   Vector representation & Candidate for counterexample & Greedy solution \\ 
  \hline
$(0, 1,m-1,0 \ldots, 0)_\mathcal{F}$ & $2am - a$ & $f_4+(m-1)f_1$\\
\hline
 &   & $f_6$, if $m=a=2$  \\
$(0, 0,m,0 \ldots, 0)_\mathcal{F}$& $2am$ &  else $f_5+(m-1)f_1$ \\
\hline

\end{tabular}
\end{center}

Next, consider $f_k - 1$ where $k$ is even and $6\leq k < 2r+1$. (Again, see Equation~\ref{fvectors} for the vector representation of $f_k-1$.) The vectors associated to $p=1$ and $p=2$ and the resulting candidates for counterexamples are below. An asterisk (*) denotes that the value is in the $k-1^{st}$ entry.

\begin{center}
\begin{tabular}{|l | c | c |}
  \hline
   Vector representation & Candidate for  & Greedy solution \\ 
      & counterexample &  \\ 
  \hline
$(0, 1,m-2,0 \ldots, 1^*, \ldots, 0)_\mathcal{F}$& $f_{k-1}+2am-3a$&  $f_{k} + (m-1)f_1$ \\ 
  \hline
 & & $f_{k+2}$, if $m=a=2$,
 \\
$(0, 0,m-1,0 \ldots, 1^*, \ldots, 0)_\mathcal{F}$& $f_{k-1}+2am-2a$&  else $f_{k+1} + (m-1)f_1$

\\ 
  \hline
\end{tabular}
\end{center}

Now consider $f_k-1$ where $2r+1 < k < 3r$. The vectors associated to $p=1$ and $p=2$ and the resulting candidates for counterexamples are below. 

\begin{center}
\begin{tabular}{|l | c | c |}
  \hline
   Vector representation & Candidate for counterexample & Greedy solution \\ 
  \hline
$(0, 2, m-2,0, \ldots, 1^*, \ldots, 0)_\mathcal{F}$& $f_{k-1} + 2am-2a$ & $f_{k}+(m-1)f_1$
\\ 
  \hline
$(0, 0, m-1,0, \ldots, 1^*, \ldots, 0)_\mathcal{F}$ &$f_{k-1} + 2am -2a$ & $f_{k}+(m-1)f_1$
\\ 
  \hline
\end{tabular}
\end{center}

We now show that none of the values given above as candidates for the minimum counterexample are, in fact, counterexamples. If $m=2$, then  the greedy solutions in each of the three cases are optimal, since each solution has only 1 or 2 coins. For $m>2$, a total of $m$ coins are in the greedy solution, and we must work carefully to show that the solution is optimal. Observe that all the greedy solutions above are of the form $f_k+(m-1)f_1$ for some value $4\leq k < 3r$. So it remains to show that an optimal solution for $f_k+(m-1)f_1$ has $m$ coins. 

Suppose that there exists a $k$ such that an optimal solution $f_k+(m-1)f_1$ has fewer than $m$ coins. Take the smallest such value, which we denote by $v$. By Theorem~\ref{Chang}, an optimal solution for $v$ does not use $f_1$ or $f_k$. 

We know that an optimal solution for $v$ uses at most one coin $f_j$ where $2r+1<j<3r$ because the sum of any two such coins is larger than $f_{3r}+m-1$ and $v \leq f_{3r}+m-1$.

An optimal solution for $v$ has at most one even-indexed coin $f_j$ where $2\leq j < 2r+1$, because if there were two such coins $f_i$ and $f_j$, then they could be replaced with a single coin (either $f_{i+j-1}$ or $f_{r+(i+j)/2}$, see Equation~\ref{small}), which contradicts the optimality of the solution. 

If an optimal solution for $v$ contains an even indexed coin $f_j$ where $3<j < 2r+1$, then we can trade the coins $f_j$ and any other coin $f_i$ in the optimal solution for $f_2$ and a larger indexed coin (see Equations~\ref{small} and~\ref{big}). Therefore, without loss of generality, the optimal solution for $v$ has no even-indexed coins $f_j$ where $3< j< 2r+1$.

Next, if an optimal solution for $v$ contains two coins  $f_i$ and $f_j$ where $i$ and $j$ are odd and $3< i,j\leq 2r+1$, then we can trade them in for $f_3$ and another coin $f_r$ 
(again, see Equation~\ref{small}). Therefore without loss of generality, the optimal solution has at most one coin $f_i$ with odd index $3<i\leq 2r+1$.

If an optimal solution for $v$ has both a coin of value $f_i$ where $i$ is odd $3\leq i\leq 2r+1$ and a coin of value $f_j$ where  $2r+1<j\leq 3r$, we can exchange the pair of coins for a different pair of coins $f_q$ and $f_3$ where $2r+1<q\leq 3r$ (Equation~\ref{big}). 

Putting all of these observations together, we conclude that any optimal solution  for $v$ can be exchanged for another optimal solution fitting one of the following scenarios:
\begin{enumerate}
     \item $v= xf_3$, where $0< x \leq m-1$
     \item $v= f_2 + yf_3$, where $0< y \leq m-2$
    \item 
    $v=f_2 + xf_3+f_{j}, $  
     where $j$ is odd, $3\leq j\leq 2r+1$, and $0< x\leq m-3$.
      
    \item $v=yf_3+f_{j},$ 
      where $j$ is odd, $3\leq j\leq 2r+1$, and $0< y\leq m-2$.
    
    \item $v=f_2 + xf_3+f_{j}, $ 
     where $2r+1< j\leq 3r$, and $0< x\leq m-3$.

    \item $v=yf_3+f_{j},$ 
     where $2r+1< j\leq 3r$, and $0< y \leq m-2$.

\end{enumerate}
Now for (1) and (2), we compute:
\begin{align*}
    f_3 <v & \leq f_3(m-1) \\
    & = f_3 + 2a(m-2)\\
    & = [f_3 + (2a-1)(m-1)-a]+m-1-a \\
    &= f_4 + m-1-a\\
    &< f_{4}.
\end{align*}
And for (3)--(6), we compute:
\begin{align*}
    f_j <v & \leq f_j + f_3(m-2) \\
    &= f_j + 2a(m-2)\\
    &= f_j + (2a-1)(m-1)-2a+m-1 \\
    &< f_{j+1}.
\end{align*}

Hence in all cases, $v$ falls {\em between} consecutive coin values. In cases (1) and (2), we conclude that $f_3$ is a coin value in the greedy solution for $v$. This is a contradiction because on one hand $v$ was the minimum counterexample (and as such, the optimal solutions for $v$ use different coin values from the greedy solution). But then on the other hand, $f_3$ is present both in an optimal solution for $v$ and also in the greedy solution. Similarly, for cases (3), (4), (5), and (6), we conclude that $f_j$ is a coin value in both the greedy solution for $v$ and an optimal solution for $v$, which is a contradiction. Hence none of these values are minimal counterexamples. Therefore the coin system $\mathcal{F}$ is orderly.
\end{proof}  

\section{Future directions}\label{future}
In Section~\ref{extensions}, we discussed 3 families of coin systems with pattern {\footnotesize $+++-\cdots -+$}. We want to understand all such families of coin systems. Through an extensive computer search, we observed that the structure of these coin systems appears to depend on the number of coin values modulo 3. We describe our search results and subsequent conjectures in the following. 

First, we considered coin systems with $3r+1$ coins, where $r\geq 2$. Using a computer program, we generated all coin systems with 7 and 10 coin values, where the coin values are less than 100 and 50, respectively. The computer program checked whether any of these coin systems had pattern $+++---+$ or $+++------+$, respectively. None did. This confirmed the observations in~\cite{Adamaszek}. We conjecture that no coin systems with $3r+1$  coin values where $r\geq 2$ and with pattern $+++-\cdots -+$ exist. 

Next, we considered coin systems with $3r+2$ coins and with pattern {\footnotesize $+++-\cdots -+$.} Such coin systems with 5 coin values all belong to $\mathcal{D}$ in Theorem~\ref{D}, as was previously stated in Theorem~\ref{five}. We wrote a program that generated all coin systems with 8 or 11 coin values where the coin values themselves are less than 50. This search produced exactly two coin systems that did not already belong to the family $\mathcal{D}$. They were:
$$(1, 2, 4, 5, 7, 9, 12, 17)
~~~~~\text{ and }~~~~~ (1, 2, 4, 5, 7, 9, 12, 14, 17, 22, 27)$$
The similarity of these two coin systems suggests that there is one more family of coin systems which, together with $\mathcal{D}$, cover all possibilities. We call this family $\mathcal{G}$ and define it as follows. 
\begin{definition}\label{G}
    
    Let $r\geq2$. Define $\mathcal{G} = (1, g_2, \ldots, g_{3r+2})$ as follows:
$$ g_k = 
\begin{cases}
        g_{k-1}+1  & \text{if $k$ is even and } 2 \leq k  < 5\\
        g_{k-1}+2  & \text{if $k$ is odd and } 2 <  k  \leq 5\\
        g_{k-1}+2  & \text{if $k$ is even and } 5 < k  < 2r+4\\
        g_{k-1}+3  & \text{if $k$ is odd and } 5 < k < 2r+4\\
        g_{k-1}+5 & \text{if } 2r+4 \leq k\leq 3r+2
 \end{cases}
    $$
\end{definition}
It is possible that this family $\mathcal{G}$ itself belongs to an even larger family of coin systems satisfying our conditions. However, we have not observed this yet.

Finally, we considered coin systems with $3r$ coin values, where $r\geq 2$. Of course, with six value of coins, all such coin systems are members of $\mathcal{E}$ or $\mathcal{F}$ (Theorem~\ref{mainresult}). A computer search of all coin systems with 9 coin values where the values are less than 100 also yielded only coin systems from $\mathcal{E}$ and $\mathcal{F}$. However, the situation changed when we increased to 12 coin values. There are many examples of coin systems with 12 coin values that do not belong to $\mathcal{E}$ or $\mathcal{F}$. The following are the first 10 examples.

{\small
\begin{itemize}
\begin{multicols}{2}
\item $(1, 2, 4, 5, 7, 8, 10, 13, 15, 18, 21, 29)$
\item $(1, 2, 4, 5, 7, 8, 11, 13, 14, 17, 20, 26)$
\item $(1, 2, 4, 5, 7, 9, 10, 12, 15, 17, 20, 25)$
\item $(1, 2, 4, 5, 7, 9, 12, 15, 17, 20, 25, 33)$
\item $(1, 2, 4, 5, 7, 9, 12, 16, 19, 24, 31, 43)$
\item $(1, 2, 4, 5, 7, 9, 12, 16, 19, 24, 31, 46)$
\item $( 1, 2, 5, 6, 9, 10, 14, 17, 18, 22, 26, 34)$
\item $(1, 2, 6, 7, 11, 12, 17, 21, 22, 27, 32, 42)$
\item $(1, 3, 5, 8, 10, 12, 15, 17, 22, 24, 29, 36)$
\item $(1, 3, 6, 8, 11, 14, 16, 19, 24, 27, 32, 40)$
\end{multicols}
\end{itemize}
}

In light of these observations, we have the following conjecture. 

\begin{conjecture}\label{conjecture}

  \begin{enumerate}
   \item There are no coin systems with 7 or 10 coin values and pattern $+++-\cdots-+$. More generally, there are no coin systems with $3r+1$ coin values with $r\geq 2$ and pattern $+++-\cdots -+$. 
 \item A coin system with 9 coin values has pattern $+++-----+$ if and only if it is a member of $\mathcal{E}$ or $\mathcal{F}$ defined in Theorem~\ref{E} and Theorem~\ref{F}, respectively.
 
 \item A coin system with $3r+2$ coin values has pattern $+++-\cdots -+$ if and only if it is a member of $\mathcal{D}$ defined in Theorem~\ref{D} or $\mathcal{G}$ from Definition~\ref{G}.
 \end{enumerate}
\end{conjecture}

The situation with $3r$ coin values where $r\geq 4$ appears to be more complex, and hence we do not have a concrete conjecture. 
It would be interesting to determine the structure of all such coin systems. 

Recall that we used Theorem~\ref{Pearson} to prove that the coin systems $\mathcal{D}$, $\mathcal{E}$, and $\mathcal{F}$ are orderly. It would be interesting if other methods are developed to address coin systems with a large number of coin values.

 \bibliographystyle{plain} 
 \bibliography{biblio}

\end{document}